\DeclareSymbolFont{cyrletters}{OT2}{wncyr}{m}{n}
\numberwithin{equation}{section} \numberwithin{figure}{section}
\DeclareMathOperator{\Coker}{Coker}
\DeclareMathOperator{\Pic}{Pic} \DeclareMathOperator{\Div}{Div}
\DeclareMathOperator{\Gal}{Gal} 
\DeclareMathOperator{\Ker}{Ker}
\DeclareMathOperator{\Aut}{Aut} 
\DeclareMathOperator{\Spec}{Spec}
\DeclareMathOperator{\Hom}{Hom} \DeclareMathOperator{\re}{Re}
\DeclareMathOperator{\im}{Im}   
\DeclareMathOperator{\vol}{vol} 
\DeclareMathOperator{\Br}{Br} 
\DeclareMathOperator{\inv}{inv} 
\DeclareMathOperator{\ord}{ord}
\DeclareMathOperator{\Tor}{Tor} \DeclareMathOperator{\Ext}{Ext}
\DeclareMathOperator{\Res}{R} \DeclareMathOperator{\Norm}{N}
\DeclareMathOperator{\Frob}{Frob}
\DeclareMathOperator{\HH}{H}
\DeclareMathOperator{\id}{id}
\let\Im\relax
\DeclareMathOperator{\Im}{Im} 
\DeclareMathSymbol{\Sha}{\mathalpha}{cyrletters}{"58}
\newcommand{\OO}{\mathcal{O}}
\newcommand\exterior{\wedge^2}
\newcommand{\dual}[1]{{#1}^{\wedge}}
\newcommand{\one}{\mathbbm{1}}
\newcommand\FF{\mathbb{F}}
\newcommand\ZZ{\mathbb{Z}}
\newcommand\QQ{\mathbb{Q}}
\newcommand\RR{\mathbb{R}}
\newcommand\CC{\mathbb{C}}
\newcommand\GG{\mathbb{G}}
\newcommand\Gm{\GG_\mathrm{m}}
\newcommand{\Adele}{\mathbf{A}}
\newcommand{\bbF}{{\mathbb F}}
\newcommand{\bbZ}{{\mathbb Z}}
\newcommand{\cA}{{\mathcal A}}
\newcommand{\cO}{{\mathcal O}}
\newcommand{\cX}{{\mathcal X}}
\newcommand{\fp}{\mathfrak{p}}
\newcommand{\gextk}{\ensuremath{G\text{-ext}(k)}}
\newcommand{\pair}[2]{\ensuremath{\langle #1, #2 \rangle}}
\newcommand{\Val}{\Omega_k} 
\newtheorem{lemma}{Lemma}
\newtheorem{theorem}[lemma]{Theorem}
\newtheorem{proposition}[lemma]{Proposition}
\newtheorem{corollary}[lemma]{Corollary}
\theoremstyle{definition}
\newtheorem{example}[lemma]{Example}
\newtheorem{examples}[lemma]{Examples}
\newtheorem{definition}[lemma]{Definition}
\newtheorem{remark}[lemma]{Remark}
\newtheorem*{ack}{Acknowledgements}
\numberwithin{lemma}{section}
\begin{document}

\title{Number fields with prescribed norms}

\author{\sc Christopher Frei}
\address{Christopher Frei\\
TU Graz\\
Institute of Analysis and Number Theory\\
Steyrergasse 30/II\\
8010 Graz\\
Austria.}
\email{frei@math.tugraz.at}
\urladdr{https://www.math.tugraz.at/~frei/}

\author{Daniel Loughran}
  \address{Daniel Loughran \\
	Department of Mathematical Sciences\\
	University of Bath\\
	Claverton Down\\
	Bath\\
	BA2 7AY\\
	UK}
\urladdr{https://sites.google.com/site/danielloughran/}

\author{\sc Rachel Newton \\ With an appendix by Yonatan Harpaz and Olivier Wittenberg}
\address{Rachel Newton\\
Department of Mathematics\\ 
King's College London\\
Strand\\ 
London\\
WC2R 2LS\\
UK.}
   \email{rachel.newton@kcl.ac.uk}
\urladdr{https://racheldominica.wordpress.com/}

\address{Yonatan Harpaz\\
Institut Galil\'ee, Universit\'e Sorbonne Paris Nord, 99~avenue
Jean-Baptiste Cl\'ement, 93430 Villetaneuse, France}
\email{harpaz@math.univ-paris13.fr}
\urladdr{https://www.math.univ-paris13.fr/~harpaz/}

\address{Olivier Wittenberg\\
Institut Galil\'ee, Universit\'e Sorbonne Paris Nord, 99~avenue
Jean-Baptiste Cl\'ement, 93430 Villetaneuse, France}
\email{olivier.wittenberg@math.u-psud.fr}
\urladdr{http://www.math.u-psud.fr/~wittenberg/}

\subjclass[2010]
{11R37 (primary), 
11R45, 
43A70, 
14G05. 
(secondary)}

\begin{abstract}
We study the distribution of extensions of a number field $k$ with fixed abelian Galois group $G$, from which a given finite set of elements of $k$ are norms. In particular, we show the existence of such extensions. Along the way, we show that the Hasse norm principle holds for $100\%$ of $G$-extensions of $k$, when ordered by conductor.
The appendix contains an alternative purely geometric proof of our existence result. 
\end{abstract}

\maketitle

\thispagestyle{empty}

\tableofcontents

\section{Introduction} 

Let $k$ be a number field.
In this paper we are interested in the images of the norm maps $N_{K/k}:K^* \to k^*$ for finite field extensions $K/k$. Specifically, given an element $\alpha \in k^*$ and a finite group $G$, does there exist an extension $K/k$ with Galois group $G$ such that $\alpha$ is a norm from $K$? We are able to answer this question positively if one restricts to abelian extensions of $k$. Furthermore, in the abelian setting, we prove the existence of such an extension from which a given finite set of elements of $k^*$ are norms.

\begin{theorem} \label{thm:existence}
	Let $k$ be a number field, $G$ a finite abelian group and $\mathcal{A} \subset k^*$ a finitely generated subgroup. Then there exists an abelian
	extension $K/k$ with Galois group $G$ such that every element of $\mathcal{A}$ is a norm from $K$.
\end{theorem}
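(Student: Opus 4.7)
The plan is to translate the statement into class field theory, construct a candidate character via a Grunwald--Wang argument, and then promote local norms to global norms using the paper's $100\%$ Hasse norm principle result.

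First, by global class field theory, abelian $G$-extensions $K/k$ correspond (up to automorphisms of $G$) to continuous surjections $\chi : C_k \to G$, where $C_k = \mathbb{A}_k^*/k^*$ is the idele class group. An element $\alpha \in k^*$ is a local norm for $K/k$ at every place of $k$ if and only if $\chi_v(\alpha) = 0$ for every place $v$, where $\chi_v$ denotes the restriction of $\chi$ to $k_v^*$. If moreover $K/k$ satisfies the Hasse norm principle, this implies $\alpha \in N_{K/k}(K^*)$. Writing $\alpha_1, \dots, \alpha_r$ for generators of $\mathcal{A}$, it therefore suffices to produce a surjective $\chi$ with $\chi_v(\alpha_i) = 0$ for all $i$ and all $v$, and whose associated extension $K/k$ satisfies the Hasse norm principle.

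Second, I would construct such a $\chi$ by a Grunwald--Wang-style argument. Let $S$ be a finite set of places containing the archimedean ones and every non-archimedean place at which some $\alpha_i$ fails to be a unit. For each $v \in S$, choose a continuous local character $\chi_v : k_v^* \to G$ that vanishes on the finitely generated subgroup $\langle \alpha_1, \dots, \alpha_r \rangle$ of $k_v^*$; this is possible because $k_v^*$ admits many continuous quotients annihilating a prescribed closed finitely generated subgroup. Arrange that the $\chi_v$ jointly surject onto $G$, enlarging $S$ if needed. By Grunwald--Wang---handling Wang's special case via an auxiliary prime when the $2$-primary part of $G$ requires it---there exists a global surjective character $\chi : C_k \to G$ restricting to the chosen $\chi_v$ at $v \in S$ and unramified outside $S$. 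For $v \notin S$, unramifiedness of $\chi_v$ together with $\alpha_i \in \mathcal{O}_v^*$ forces $\chi_v(\alpha_i) = 0$, so every $\alpha_i$ is a local norm at every place for the corresponding extension $K/k$.

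Third, by varying the unramified behaviour of $\chi$ outside $S$, one obtains an infinite family $\mathcal{F}$ of $G$-extensions $K/k$, each satisfying the everywhere-local-norm condition for $\mathcal{A}$. Since $\mathcal{F}$ is defined by finitely many local conditions at the places in $S$, it should have positive density among all $G$-extensions of $k$ when ordered by conductor. Combining this with the paper's theorem that the Hasse norm principle holds for $100\%$ of $G$-extensions of $k$ in conductor order, one concludes that $\mathcal{F}$ contains an extension satisfying the Hasse norm principle. Any such $K/k$ then realises every element of $\mathcal{A}$ as a global norm, completing the proof.

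The main obstacle is the interplay between Steps~2 and~3: one must verify that the locally-constrained family from Grunwald--Wang genuinely has positive density among all $G$-extensions counted by conductor, so that the $100\%$ Hasse norm principle result can be applied inside this sub-family. Wang's special case for the $2$-primary part of $G$ also requires careful handling when setting up the local characters in Step~2.
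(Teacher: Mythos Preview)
Your overall architecture---produce extensions in which $\mathcal{A}$ is everywhere locally a norm, then invoke the Hasse norm principle statistically---is exactly the paper's route via Theorems~\ref{thm:loc} and~\ref{thm:HNP_rare}. But the execution has a real gap and an unnecessary detour.

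The detour is Step~2. You do not need Grunwald--Wang to exhibit a member of the local-norm family: the trivial sub-$G$-extension $k/k$ already has every $\alpha_i$ as a local norm everywhere. This is precisely how the paper shows positivity of the leading constant in Theorem~\ref{thm:loc} (via Theorem~\ref{thm:main}, which only asks for a \emph{sub}-$G$-extension realising the local conditions). Your construction of surjective local characters killing $\mathcal{A}$ and the subsequent handling of Wang's special case are therefore avoidable.

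The gap is in Step~3, and you have correctly identified where it lies, but the situation is worse than you suggest. Your claim that the locally constrained family $\mathcal{F}$ ``should have positive density among all $G$-extensions of $k$'' is generically \emph{false}: Theorem~\ref{thm:norm_rare} shows that, unless $\mathcal{A}$ satisfies a very restrictive $e$th-power condition, the family of $G$-extensions for which $\mathcal{A}$ consists of everywhere local norms has density \emph{zero} among all $G$-extensions. Consequently the $100\%$ HNP statement for all $G$-extensions (Corollary~\ref{cor:100HNP}) tells you nothing about $\mathcal{F}$. What is actually needed is the sharper Theorem~\ref{thm:HNP_rare}, which asserts $0\%$ HNP failure \emph{within} the local-norm family $N_{\mathrm{loc}}(k,G,\mathcal{A},B)$ itself; and to even formulate that ratio one must first establish an asymptotic for $N_{\mathrm{loc}}$ with positive leading constant (Theorem~\ref{thm:loc}). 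Both of these are derived from the paper's main counting theorem (Theorem~\ref{thm:main}), whose proof---including the delicate verification that the sum of Euler products in the leading constant does not cancel---is the technical heart of the work and is not bypassed by producing a single extension via Grunwald--Wang.
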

As an application, we obtain the following corollary.

\begin{corollary}
	Let $k$ be a number field, $G$ a finite abelian group and $S$ a finite set of places of $k$. Then there exists an abelian
	extension $K/k$ with Galois group $G$ such that every $S$-unit of $k$ is a norm from $K$.
\end{corollary}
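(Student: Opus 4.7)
The plan is to deduce this corollary as an immediate consequence of Theorem \ref{thm:existence}. The only thing that needs to be checked is that the group $\mathcal{A} := \mathcal{O}_{k,S}^*$ of $S$-units of $k$ is a finitely generated abelian group, which is exactly the content of Dirichlet's $S$-unit theorem. If $S$ does not already contain all archimedean places of $k$, I would first enlarge $S$ to include them; this only enlarges the group of $S$-units, so any $K$ that works for the larger set also works for the original. After enlargement, one has $\mathcal{O}_{k,S}^* \cong \mu_k \times \ZZ^{|S|-1}$, where $\mu_k$ denotes the finite group of roots of unity in $k$, which is visibly finitely generated.

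Having verified this, I would apply Theorem \ref{thm:existence} with $\mathcal{A} = \mathcal{O}_{k,S}^*$ to obtain an abelian extension $K/k$ with Galois group $G$ such that every element of $\mathcal{A}$ is a norm from $K$; by definition this is exactly the statement that every $S$-unit of $k$ is a norm from $K$. There is no genuine obstacle in this deduction: all the real work is contained in Theorem \ref{thm:existence}, and the $S$-unit theorem simply provides the bridge from the ``finite set of places'' formulation of the corollary to the ``finitely generated subgroup'' formulation of the theorem.
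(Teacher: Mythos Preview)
Your proposal is correct and matches the paper's approach: the corollary is stated there as an immediate application of Theorem~\ref{thm:existence}, with no further argument given, the point being exactly that Dirichlet's $S$-unit theorem makes $\mathcal{O}_{k,S}^*$ a finitely generated subgroup of $k^*$ to which the theorem applies. Your remark about enlarging $S$ to contain the archimedean places is harmless but unnecessary, since a subgroup of a finitely generated abelian group is finitely generated.
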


We prove Theorem \ref{thm:existence} by \emph{counting} the collection of abelian extensions under consideration; we obtain an asymptotic formula for the number of such extensions of bounded conductor, and show explicitly that the leading constant in this formula is non-zero. In particular, we prove the existence of infinitely many extensions with the desired properties. The strategy of proving existence via counting is widely used in analytic number theory, for example in the context of the Hardy--Littlewood circle method. Our proof of Theorem \ref{thm:existence} seems to be the first case where it is implemented for number fields. Our methods even allow us to prove existence of such an extension $K/k$ which satisfies any finite collection of admissible local conditions (Corollary \ref{cor:existence}).

Before we can explain these more general results, we must introduce some notation. Fix a choice of algebraic closure $\overline{k}$ of $k$ and let $G$ be a finite abelian group. By a \emph{$G$-extension} of $k$, we mean a surjective continuous homomorphism $\varphi:\Gal(\overline{k}/k)\to G$. This corresponds to choosing an extension $k\subset K \subset \bar{k}$ together with an isomorphism $\Gal(K/k) \cong G$. Keeping track of the isomorphism with $G$ simplifies the set-up and the counting. It has no qualitative effect on the results;
forgetting the choice of isomorphism merely scales all the counting results by $|\Aut(G)|$. 
  We write $G$-\textrm{ext}$(k)$ for the set of all $G$-extensions of $k$. Given $\varphi\in G$-\textrm{ext}$(k)$, we write $K_\varphi$ for the corresponding number field, and $\Phi(\varphi)$ for the norm of the conductor of $K_\varphi$ (viewed as an ideal of $k$). Moreover, we write $\Adele_{K_\varphi}^*$ for the ideles of the number field $K_{\varphi}$. We are interested in the counting functions
\begin{align}
	N(k,G,B) &= \#\{ \varphi \in \gextk : \Phi(\varphi) \leq B\}, \nonumber\\
	N_{\mathrm{loc}}(k,G,\mathcal{A},B) &= \#\{ \varphi \in \gextk : \Phi(\varphi) \leq B, \mathcal{A} \subset \Norm_{K_\varphi/k} \Adele_{K_\varphi}^* \}, \label{def:counting_functions}\\
	N_{\mathrm{glob}}(k,G,\mathcal{A},B) &= \#\{ \varphi \in \gextk : \Phi(\varphi) \leq B, \mathcal{A} \subset \Norm_{K_\varphi/k} K_\varphi^* \}.\nonumber
\end{align}

The first counts all $G$-extensions $\varphi$ of $k$ of bounded conductor, the second counts only those for which every element of $\mathcal{A}$ is everywhere locally a norm, the third only those for which every element of $\mathcal{A}$ is a global norm.

An asymptotic formula for $N(k,G,B)$ was first obtained by Wood in \cite{Woo10}, building on numerous special cases. In this paper we obtain asymptotic formulae for the other counting functions. Our  formulae are stated in terms of the invariant $\varpi(k,G,\mathcal{A})$ which we now define.

\begin{definition}\label{def:varpi_intro}
  Let $k$ be a number field, $G$ a finite abelian group, and $\mathcal{A}\subset k^*$ a finitely generated subgroup. For $d\in\ZZ_{\geq 1}$, let $k_d=k(\mu_d,\sqrt[d]{\mathcal{A}})$. We define
  \begin{equation*}
    \varpi(k,G,\mathcal{A}) = \sum_{g\in G\smallsetminus \{\id_G\}}\frac{1}{[k_{|g|}:k]},
  \end{equation*}
  where $|g|$ denotes the order of $g$ in $G$ and $\id_G \in G$ is the identity element.
\end{definition}

\begin{theorem} \label{thm:global}
	Let $k$ be a number field, $G$ a non-trivial finite abelian group, and $\mathcal{A} \subset k^*$ a finitely generated subgroup.
	Then 
	$$N_{\mathrm{glob}}(k,G,\mathcal{A},B) \sim c_{k,G,\mathcal{A}}B(\log B)^{\varpi(k,G,\mathcal{A})-1}$$
	as $B \to \infty$, for some $c_{k,G,\mathcal{A}} > 0$.
      \end{theorem}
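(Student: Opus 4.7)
The plan is to split the proof into two stages: first establish an asymptotic for the local count $N_{\mathrm{loc}}(k,G,\mathcal{A},B)$ using a Dirichlet series / Tauberian argument modelled on Wood's \cite{Woo10}, and then deduce the asymptotic for $N_{\mathrm{glob}}$ from it using the announced $100\%$ Hasse norm principle. Since $\mathcal{A}$ is finitely generated, $\mathcal{A}\not\subset N_{K_\varphi/k}K_\varphi^*$ holds precisely when HNP for $K_\varphi/k$ fails for at least one element of a fixed finite generating set, so $0\leq N_{\mathrm{loc}}-N_{\mathrm{glob}}$ is bounded by the number of $G$-extensions $\varphi$ (of conductor at most $B$) at which HNP fails for some such element. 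The $100\%$ HNP result then yields $N_{\mathrm{loc}}-N_{\mathrm{glob}}=o(B(\log B)^{\varpi-1})$, reducing the problem to the asymptotic for $N_{\mathrm{loc}}$.

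For the local count, identify $\varphi\in\gextk$ via global class field theory with a continuous surjection $\Adele_k^*/k^*\to G$. The condition $\mathcal{A}\subset N_{K_\varphi/k}\Adele_{K_\varphi}^*$ translates to $\varphi_v(\alpha)=\id_G$ at every place $v$ for every $\alpha\in\mathcal{A}$, a purely local constraint on each $\varphi_v$, and the conductor $\Phi(\varphi)$ factors into local contributions depending only on $\varphi_v$. The plan is to expand
\[
F(s)=\sum_{\varphi}\frac{\one\!\left[\mathcal{A}\subset N_{K_\varphi/k}\Adele_{K_\varphi}^*\right]}{\Phi(\varphi)^s}
\]
as a finite alternating sum (M\"obius inversion over the subgroup lattice of $G$, to impose surjectivity of $\varphi$) of Fourier-dual Dirichlet series over characters of $G$, each of which factors as an Euler product and may be compared with Hecke $L$-functions. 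The novelty versus \cite{Woo10} is that a character of $G$ of order $d$ enters only through Hecke characters whose fields of definition contain $k_d=k(\mu_d,\sqrt[d]{\mathcal{A}})$: this is the Kummer-theoretic reformulation of the local norm condition. Tracking the pole of $F(s)$ at $s=1$ via Chebotarev in the tower $k_d/k$ identifies its order with $\sum_{g\in G\setminus\{\id_G\}}1/[k_{|g|}:k]=\varpi(k,G,\mathcal{A})$, and a Selberg--Delange Tauberian theorem extracts the leading term $cB(\log B)^{\varpi-1}$.

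The principal obstacle is showing $c_{k,G,\mathcal{A}}>0$. Each local Euler factor at $s=1$ is visibly non-zero, because the trivial local character satisfies every norm condition. The subtlety is that the M\"obius inversion enforcing surjectivity can in principle annihilate the leading coefficient of $F(s)$ at $s=1$; ruling this out amounts to producing at least one global $\varphi$ meeting all the local norm conditions, which is exactly the content of Theorem \ref{thm:existence}. In this sense, positivity of $c_{k,G,\mathcal{A}}$ and the existence statement reinforce each other. A further technical point is that the fields $\{k_{|g|}\}_{g\in G}$ are generically not linearly disjoint over $k$, so assembling the Chebotarev contributions into the sum in Definition \ref{def:varpi_intro} requires careful bookkeeping.
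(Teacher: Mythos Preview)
Your overall strategy---first prove the asymptotic for $N_{\mathrm{loc}}$ via a Dirichlet series analysis, then deduce the asymptotic for $N_{\mathrm{glob}}$ from the $100\%$ Hasse norm principle---is exactly the paper's approach. The analytic machinery you sketch (M\"obius over subgroups, class field theory, Euler products compared to $L$-functions, Selberg--Delange) is also essentially what is done.

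However, your handling of the positivity of $c_{k,G,\mathcal{A}}$ contains a genuine gap, and in fact a potential circularity. You locate the danger of cancellation in the M\"obius inversion over subgroups, and then say that ruling it out ``amounts to producing at least one global $\varphi$ meeting all the local norm conditions, which is exactly the content of Theorem~\ref{thm:existence}.'' But Theorem~\ref{thm:existence} is \emph{deduced from} Theorem~\ref{thm:global} in the paper, so you cannot invoke it here. Two corrections are needed. First, the M\"obius sum is not the source of trouble: one shows directly that for any proper subgroup $H\subsetneq G$ one has $\varpi(k,H,\mathcal{A})<\varpi(k,G,\mathcal{A})$, so the terms with $H\neq G$ contribute only to the error term and cannot cancel the leading coefficient. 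Second, the actual cancellation risk lies in the Poisson/character sum that remains after fixing $H=G$: the leading constant is a finite sum of Euler products indexed by a group $\mathcal{X}(k,G,\mathcal{A})\subset \mathcal{O}_S^*\otimes\dual{G}$, and these could in principle conspire to zero (as in Wang's counterexample). The paper's key observation, which your proposal is missing, is that positivity follows from the existence of a \emph{sub}-$G$-extension (not a genuine $G$-extension) satisfying the local norm conditions; and the trivial extension $k/k$ trivially does so. This is what breaks the apparent circularity.

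A secondary imprecision: your statement that ``a character of $G$ of order $d$ enters only through Hecke characters whose fields of definition contain $k_d$'' is not quite how the analysis goes. The Poisson dual variable ranges over the finite group $\mathcal{O}_S^*\otimes\dual{G}$, and for each such $x$ the Fourier transform is an Euler product whose local factors are governed by a frobenian function; the field $k_d$ enters when computing the mean of this frobenian function. The identification of the exponent $\varpi(k,G,\mathcal{A})$ is then a combinatorial M\"obius computation over the divisors of the exponent of $G$, not a direct appeal to Chebotarev in a single tower.
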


This theorem gives an asymptotic formula for the number of $G$-extensions from which every element of $\mathcal{A}$ is a global norm. It is natural to ask how the number of such extensions compares with the total number $N(k,G,B)$ of $G$-extensions of $k$ of conductor bounded by $B$. We observe that $N(k,G,B)=N_{\mathrm{glob}}(k,G,\{1\},B)$ and note that in this case the formula of Theorem~\ref{thm:global} agrees with \cite[Thm.~3.1]{Woo10}.

\begin{example}
In the special case where $G=\ZZ/2\ZZ$ and $\alpha\in k^*\smallsetminus k^{*2}$, we compute $\varpi(k,\ZZ/2\ZZ,\langle \alpha\rangle) = 1/2$ and thus $N_{\mathrm{glob}}(k,\ZZ/2\ZZ,\langle\alpha\rangle,B) \sim c_{k,\ZZ/2\ZZ,\langle\alpha\rangle}B(\log B)^{-1/2}$. When compared to the asymptotic $N(k,\ZZ/2\ZZ,B) \sim c_{k,\ZZ/2\ZZ} B$, this shows that for $100\%$ of quadratic extensions of $k$ the number $\alpha$ is not a norm.
\end{example}

The next theorem generalises this observation. It says that, unless we are in a very special case, for $100\%$ of $G$-extensions of $k$ not all elements of $\mathcal{A}$ are norms.

\begin{theorem} \label{thm:norm_rare}
	Let $k$ be a number field, $G$ a non-trivial finite abelian group of exponent $e$, and $\mathcal{A} \subset  k^*$ a finitely generated subgroup. Then the following are equivalent:
        \begin{enumerate}
        \item $\lim_{B \to \infty} \frac{N_{\mathrm{glob}}(k,G,\mathcal{A},B)}{N(k,G,B)} > 0$;
        \item $\mathcal{A} \subset k(\mu_d)^{*d}$ for all $d\mid e$;
        \item $\mathcal{A} \subset k_v^{*e}  \text{ for all but finitely many places } v$ of $k$.
        \end{enumerate}
\end{theorem}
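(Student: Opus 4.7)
The plan is to prove $(1)\Leftrightarrow(2)$ by comparing asymptotics via Theorem~\ref{thm:global}, and $(2)\Leftrightarrow(3)$ as a local--global statement of Grunwald--Wang type.

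For $(1)\Leftrightarrow(2)$, I will apply Theorem~\ref{thm:global} both to $\mathcal{A}$ and to the trivial subgroup $\{1\}$ of $k^*$, the latter recovering Wood's asymptotic for $N(k,G,B)$; the ratio in (1) is then (up to a positive constant factor) of order $(\log B)^{\varpi(k,G,\mathcal{A})-\varpi(k,G,\{1\})}$. Since $k(\mu_d)\subseteq k(\mu_d,\sqrt[d]{\mathcal{A}})$ for every $d$, one always has $\varpi(k,G,\mathcal{A})\leq\varpi(k,G,\{1\})$, with equality if and only if $\sqrt[d]{\mathcal{A}}\subseteq k(\mu_d)$ for every $d$ arising as the order of some element of $G$. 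Because $G$ is abelian of exponent $e$, this set of orders is exactly the set of divisors of $e$, and so the equality translates precisely to (2).

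For $(3)\Rightarrow(2)$, fix $d\mid e$ and $\alpha\in\mathcal{A}$: from $\alpha\in k_v^{*e}$ for almost all $v$, we deduce $\alpha\in k(\mu_d)_w^{*d}$ for almost all places $w$ of $k(\mu_d)$. The classical Grunwald--Wang theorem applied over $k(\mu_d)$ then gives $\alpha\in k(\mu_d)^{*d}$, since the special case of Grunwald--Wang is ruled out by the presence of a primitive $d$-th root of unity in $k(\mu_d)$.

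For $(2)\Rightarrow(3)$, I will use (2) with $d=e$ to write $\alpha=\beta^e$ with $\beta\in k(\mu_e)$, so every $e$-th root of $\alpha$ lies in $k(\mu_e)$. For a place $v$ of $k$ unramified in $k(\mu_e)/k$ and coprime to $e\alpha$, the condition $\alpha\in k_v^{*e}$ is equivalent to the Frobenius $\phi_v$ fixing some root $\zeta\beta$, and a short cocycle calculation shows this amounts to $\chi(\phi_v)^{e/g}=1$, where $\chi(\phi_v):=\phi_v(\beta)/\beta\in\mu_e$, $\omega$ denotes the cyclotomic character, and $g:=\gcd(e,\omega(\phi_v)-1)$. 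I then invoke (2) with $d=g$: writing $\alpha=\gamma^g$ with $\gamma\in k(\mu_g)$ forces $\beta^{e/g}\in k(\mu_g)$, and since $\phi_v$ acts trivially on $k(\mu_g)$ by the very definition of $g$, the required identity follows. The main obstacle is this implication $(2)\Rightarrow(3)$: one must exploit hypothesis (2) not only at $d=e$ but at the specific divisor $g$ attached to each Frobenius element, and this cocycle-level bookkeeping is exactly what avoids the exceptional phenomena behind the Grunwald--Wang special case and accounts for condition (2) being strictly weaker than the global condition $\mathcal{A}\subseteq k^{*e}$, as already witnessed by Wang's example $\alpha=16$, $e=8$.
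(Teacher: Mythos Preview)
Your proof is correct, but the route differs from the paper's in a couple of places worth noting.

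The paper argues the cycle $(3)\Rightarrow(1)\Rightarrow(2)\Rightarrow(3)$, whereas you prove $(1)\Leftrightarrow(2)$ and $(2)\Leftrightarrow(3)$ independently. Your $(1)\Leftrightarrow(2)$ via comparison of the exponents in Theorem~\ref{thm:global} is exactly the paper's $(1)\Rightarrow(2)$, and the converse direction you extract from the same comparison is fine; the paper instead deduces $(1)$ from $(3)$ via Lemma~\ref{lem:Rachel}. Your $(3)\Rightarrow(2)$ by Grunwald--Wang over $k(\mu_d)$ (where the special case is ruled out since $\mu_d\subset k(\mu_d)$) is a clean direct argument that the paper does not make explicit; the paper reaches $(2)$ from $(3)$ only by passing through $(1)$.

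The substantive difference is in $(2)\Rightarrow(3)$. Your cocycle computation inside $\Gal(k(\mu_e)/k)$ is correct (and the key identity $g=\gcd(e,\omega(\phi_v)-1)=\gcd(e,q_v-1)$ is exactly the right invariant), but the paper's argument is considerably shorter and more elementary: from $\mathcal{A}\subset k(\mu_d)^{*d}$ together with the observation that $k(\mu_d)$ embeds over~$k$ into $k_v$ whenever $q_v\equiv 1\bmod d$, one gets $\mathcal{A}_v\subset k_v^{*d}$ for such $v$; then, setting $d=\gcd(e,q_v-1)$, a one-line Hensel argument on the residue field (Lemma~\ref{lem:local_power}) upgrades this to $\mathcal{A}_v\subset k_v^{*e}$, since $\FF_v^{*d}=\FF_v^{*e}$. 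This replaces your Galois-theoretic bookkeeping with a direct computation modulo~$v$, and makes the implication look less like an obstacle than your plan suggests.
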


There is a nice cohomological way to interpret the condition (3) in Theorem~\ref{thm:norm_rare} via certain Tate--Shafarevich groups (see \S \ref{sec:cor_norm_rare}).
Together with some class field theory, this will allow us to deduce the following result.

\begin{corollary} \label{cor:norm_rare}
	Let $\mathcal{A} \subset k^*$ be a finitely generated subgroup and let $e$ be the exponent of $G$. Then the limit 
	\begin{equation}\label{eqn:norm_rare}
	\lim_{B \to \infty} \frac{N_{\mathrm{glob}}(k,G,\mathcal{A},B)}{N(k,G,B)}
	\end{equation}
		\begin{enumerate} 
	\item[(i)] only depends on the image $\mathcal{A}k^{*e}$ of $\mathcal{A}$ in $k^*/k^{*e}$;
	\item[(ii)] equals one if $\mathcal{A} \subset k^{*e}$; 
	\item[(iii)] is zero
	for all but finitely many finite subgroups
	$\mathcal{A}k^{*e}\subset k^*/k^{*e}$;
	\item[(iv)] 
	 is zero for 
	 all finitely generated subgroups $\mathcal{A} \not \subset k^{*e}$ if and only if the extension $k(\mu_{2^r})/k$ is cyclic,
	 where $2^r$ is the largest power of $2$ dividing $e$.
	\end{enumerate}
\end{corollary}

Condition $(iv)$ holds for example if $8 \nmid e$ or $\mu_{e} \subset k^*$.
Our next result shows that if $G$ is cyclic then in order to have \[0< \lim_{B \to \infty} \frac{N_{\mathrm{glob}}(k,G,\mathcal{A},B)}{N(k,G,B)}<1,\] 
for some choice of $\mathcal{A}$, the field $k$ must have more than one prime lying above $2$.

\begin{theorem}\label{thm:positivenot1}Let $k$ be a number field, let $\mathcal{A} \subset k^*$ be a finitely generated subgroup, and let $G$ be a finite cyclic group. Suppose that $k$ has only one prime lying above $2$. Then the following are equivalent:
\begin{enumerate}
\item
$\lim_{B \to \infty} \frac{N_{\mathrm{glob}}(k,G,\mathcal{A},B)}{N(k,G,B)} >0$;
\item every element of $\mathcal{A}$ is a global norm from every $G$-extension of $k$.
\end{enumerate}
\end{theorem}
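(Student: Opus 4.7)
The direction $(2) \Rightarrow (1)$ is immediate: if every element of $\mathcal{A}$ is a global norm from every $G$-extension of $k$, then $N_{\mathrm{glob}}(k,G,\mathcal{A},B) = N(k,G,B)$ for all $B$, so the limit equals $1$.

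For $(1) \Rightarrow (2)$, set $e = |G|$ and fix $\alpha \in \mathcal{A}$ together with a $G$-extension $\varphi$. By Theorem~\ref{thm:norm_rare}, condition~(1) is equivalent to $\mathcal{A} \subset k(\mu_d)^{*d}$ for every $d \mid e$, and also to $\mathcal{A} \subset k_v^{*e}$ for all but finitely many places~$v$. Since $G$ is cyclic, the Hasse norm principle reduces the claim $\alpha \in \Norm_{K_\varphi/k} K_\varphi^*$ to showing that $\alpha$ is a local norm at every place. At any $v$ with $\alpha \in k_v^{*e}$ this is automatic: by local class field theory, the norm subgroup from the local extension has index dividing $e$ in $k_v^*$ and therefore contains $k_v^{*e}$.

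The crucial step is the following Key Lemma: \emph{under the hypothesis that $k$ has a unique place $v_0$ above $2$, the failure set $S_\alpha := \{v : \alpha \notin k_v^{*e}\}$ is contained in $\{v_0\}$.} Writing $e = 2^a m$ with $m$ odd, I would analyse each factor via $k_v^*/k_v^{*e} \cong k_v^*/k_v^{*m} \times k_v^*/k_v^{*2^a}$. For the odd factor, Grunwald--Wang has no special case, so $\alpha \in k(\mu_m)^{*m}$ yields $\alpha \in k^{*m}$ and hence $\alpha \in k_v^{*m}$ for every $v$. The $d=2$ instance of~(1) provides $\alpha \in k^{*2}$, which takes care of archimedean places ($\alpha$ being positive at every real place). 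At a finite $v \nmid 2$, the extension $k_v(\mu_{2^a})/k_v$ is unramified (hence cyclic), and a Kummer-cohomological descent applies: writing $\alpha = \beta^{2^a}$ with $\beta \in k_v(\mu_{2^a})$ and $\alpha = \gamma^2$ with $\gamma \in k_v$ forces $\beta^{2^{a-1}} = \pm\gamma \in k_v^*$, so the Kummer cocycle $\sigma \mapsto \sigma(\beta)/\beta$ takes values in $\mu_{2^{a-1}}$. A direct computation on the short exact sequence $1 \to \mu_{2^{a-1}} \to \mu_{2^a} \to \mu_2 \to 1$ (using that over $v \nmid 2$ with $\mu_4 \not\subset k_v$ the only non-trivial class in $H^1(\Gal(k_v(\mu_{2^a})/k_v), \mu_{2^a})$ lifts to $-1$, which is not a square in $k_v^*$) then shows this cocycle class vanishes, i.e.\ $\alpha \in k_v^{*2^a}$. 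Combining both parts, $\alpha \in k_v^{*e}$ at every $v \neq v_0$.

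With the Key Lemma in hand, the proof concludes via Albert--Brauer--Hasse--Noether reciprocity applied to the cyclic algebra $\varphi \cup \alpha \in \Br(k)[e]$: $\sum_v \inv_v(\varphi \cup \alpha) = 0$ in $\QQ/\ZZ$. For every $v \neq v_0$ the cup product $\varphi \cup \alpha$ vanishes in $\Br(k_v)$ (since $\alpha$ is a local norm there), so $\inv_v = 0$; consequently $\inv_{v_0}(\varphi \cup \alpha) = 0$ as well, and $\alpha$ is a local norm at $v_0$ too. Hasse's norm theorem then gives $\alpha \in \Norm_{K_\varphi/k} K_\varphi^*$. The main obstacle is the Kummer-cohomological descent at $v \nmid 2$: one must verify that the $H^1$-obstruction to descending $\alpha$ from a $2^a$-th power in $k_v(\mu_{2^a})$ to a $2^a$-th power in $k_v$ is killed by the extra hypothesis $\alpha \in k_v^{*2}$, which is the precise manifestation of the Grunwald--Wang phenomenon being purely 2-adic and which, together with the uniqueness of $v_0$, is what allows reciprocity to close the argument.
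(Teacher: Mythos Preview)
Your overall strategy is exactly the paper's: invoke Theorem~\ref{thm:norm_rare}, establish that $\alpha\in k_v^{*e}$ for every $v\neq v_0$, and then use Albert--Brauer--Hasse--Noether reciprocity on the cyclic algebra $(\varphi,\alpha)$ together with Hasse's norm theorem for cyclic extensions. The only point of divergence is your Key Lemma.

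The paper dispatches your Key Lemma in one line by citing the Grunwald--Wang theorem in the form \cite[Thm.~9.1.11]{NSW08}: for any cofinite set $T$ of places,
\[
\Ker\Bigl(k^*/k^{*e}\to \prod_{v\in T} k_v^*/k_v^{*e}\Bigr)=\Ker\Bigl(k^*/k^{*e}\to \prod_{v\in T\cup\{v\nmid 2\}} k_v^*/k_v^{*e}\Bigr),
\]
so condition~(3) of Theorem~\ref{thm:norm_rare} automatically upgrades to $\alpha\in k_v^{*e}$ for all $v\nmid 2$. You instead work from condition~(2) and try to prove the same conclusion by a direct local Kummer descent.

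Your sketch is in the right spirit but is not yet a proof. For the odd part, the implication ``$\alpha\in k(\mu_m)^{*m}\Rightarrow\alpha\in k^{*m}$'' is indeed true for odd $m$ (it is the vanishing of $\HH^1(\Gal(k(\mu_m)/k),\mu_m)$, a consequence of \cite[Prop.~9.1.6]{NSW08} applied prime-by-prime), but calling this ``Grunwald--Wang has no special case'' conflates it with the local--global statement; you should cite the cohomological vanishing instead. For the $2$-part, the parenthetical claim that when $\mu_4\not\subset k_v$ the unique nontrivial class in $\HH^1(\Gal(k_v(\mu_{2^a})/k_v),\mu_{2^a})$ corresponds to the image of $-1$ in $k_v^*/k_v^{*2^a}$ is precisely the delicate local content of Grunwald--Wang and requires its own verification (note that $\HH^1$ here can be nontrivial even though the extension is cyclic---your computation for $\ZZ/2\ZZ$ acting on $\mu_4$ by inversion already shows this). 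You have correctly identified this as ``the main obstacle''; the cleanest fix is simply to cite \cite[Thm.~9.1.11]{NSW08} as the paper does, since what you are attempting is a rederivation of that result in this special case.
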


A necessary condition for an element of $k$ to be a global norm is that it is a norm everywhere locally.  However, this is not a sufficient condition in general due to possible failures of the \emph{Hasse norm principle} (HNP). Nevertheless, to prove Theorem \ref{thm:global}, we reduce to the case of everywhere local norms via the following theorem, which shows that, when ordered by conductor, ``most'' abelian extensions satisfy the Hasse norm principle.

\begin{theorem} \label{thm:HNP_rare}
	Let $k$ be a number field, $G$ a finite abelian group, and $\mathcal{A} \subset k^*$ a finitely generated subgroup.
	Then
	$$
	\lim_{B \to \infty}
		\frac{\#\{ \varphi \in \gextk : \Phi(\varphi) \leq B, \mathcal{A} \subset \Norm_{K_\varphi/k} \Adele_{K_\varphi}^*, K_\varphi \text{ fails the HNP} \}}{N_{\mathrm{loc}}(k,G,\mathcal{A},B)} =0.
	$$
\end{theorem}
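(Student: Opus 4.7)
The strategy is to combine Tate's cohomological characterisation of HNP failure with the counting techniques used to establish the asymptotic for $N_{\mathrm{loc}}(k,G,\mathcal{A},B)$, showing that HNP-failing extensions in our family form a set whose counting function has a strictly smaller exponent of $\log B$.

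The first step is a cohomological reduction. For an abelian $G$-extension $K_\varphi/k$, Tate's theorem applied to the norm one torus $R^{(1)}_{K_\varphi/k}\Gm$ identifies $\Sha(K_\varphi/k)$ with (the dual of) the cokernel
\[
\coker\Bigl(\bigoplus_v \bigwedge\nolimits^2 G_v(\varphi) \to \bigwedge\nolimits^2 G\Bigr),
\]
where $G_v(\varphi) \subseteq G$ is the decomposition group at $v$. Hence HNP fails for $K_\varphi/k$ if and only if there exists a non-cyclic subgroup $H \subseteq G$ (necessarily generated by two elements) such that $G_v(\varphi) \not\supseteq H$ for every place $v$. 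Since $G$ has only finitely many non-cyclic subgroups, it suffices to bound, for each such $H$, the quantity
\[
\mathcal{N}_H(B) := \#\bigl\{ \varphi \in \gextk : \Phi(\varphi) \leq B,\ \mathcal{A} \subseteq \Norm_{K_\varphi/k} \Adele^*_{K_\varphi},\ G_v(\varphi) \not\supseteq H \text{ for all } v\bigr\}.
\]

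At any unramified place $v$, $G_v(\varphi)$ is cyclic and thus automatically fails to contain the non-cyclic $H$. The constraint in $\mathcal{N}_H(B)$ therefore imposes a condition only at the ramified primes: at each such $v$, the local inertia together with any Frobenius lift must not generate a subgroup containing $H$. Parametrising $\varphi$ via the idele class group (as in Wood~\cite{Woo10}) and factorising the conductor $\Phi(\varphi)$ locally, one can apply the same Tauberian / Dirichlet series framework used to prove the asymptotic for $N_{\mathrm{loc}}(k,G,\mathcal{A},B)$. The key point is that excluding decomposition groups containing $H$ at every ramified prime removes a positive-density subset of allowable local character data, strictly reducing the exponent of $\log B$: one obtains $\mathcal{N}_H(B) = O(B(\log B)^{\varpi_H - 1})$ for some $\varpi_H < \varpi(k,G,\mathcal{A})$.

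Summing over the finitely many non-cyclic subgroups $H \subseteq G$ then yields a bound of the form $O(B(\log B)^{\varpi^* - 1})$ for the numerator, with $\varpi^* < \varpi(k,G,\mathcal{A})$, whereas the denominator satisfies $N_{\mathrm{loc}}(k,G,\mathcal{A},B) \gg B(\log B)^{\varpi(k,G,\mathcal{A}) - 1}$ from the asymptotic analogous to Theorem~\ref{thm:global}; the ratio is therefore $O((\log B)^{\varpi^* - \varpi(k,G,\mathcal{A})}) \to 0$. The main obstacle is quantifying precisely how much the exponent drops upon imposing $G_v \not\supseteq H$ at all ramified places, which requires identifying the characters of the idele class group whose ramified local components fall into the forbidden inertia/Frobenius configurations and matching these with the summands in Definition~\ref{def:varpi_intro}, via the fact that the local conductor exponent at $v$ is controlled by the order of the inertia image in $G$.
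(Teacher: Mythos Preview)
Your reduction via Tate's theorem is essentially the same as the paper's, though your ``if and only if'' is not quite right: the existence of a bicyclic $H\subseteq G$ with $G_v(\varphi)\not\supseteq H$ for all~$v$ is only a \emph{necessary} condition for HNP failure, not sufficient (for the upper bound you need, this direction is enough).

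The genuine divergence is in how you handle the resulting local constraint. You claim that imposing ``$G_v(\varphi)\not\supseteq H$'' at all ramified places yields a counting function with a \emph{strictly smaller} exponent $\varpi_H<\varpi(k,G,\mathcal{A})$. This is plausible (and true in examples), but it does not follow from Theorem~\ref{thm:main}, and you do not prove it. The obstacle is structural: the condition ``$\Im\chi_v\not\supseteq H$'' is \emph{not} invariant under multiplication by unramified characters $\psi_v\in\Hom(k_v^*/\OO_v^*,G)$, since the decomposition group depends on both inertia and Frobenius. This invariance (Lemma~\ref{lem:invariance}) is what allows the paper to reduce the local Fourier transform to a sum over $\Hom(\OO_v^*,G)$ (Lemma~\ref{lem:O_v_sum}) and obtain the clean frobenian Euler factors of Lemma~\ref{lem:local_good}. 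Without it, you would need to redo the entire Euler-product analysis of \S\ref{sec:analytic_continuation} for a more complicated local indicator function, and then compute the new mean and verify it is strictly smaller. Your final paragraph acknowledges this is ``the main obstacle'' but does not resolve it.

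The paper avoids this difficulty altogether. Rather than imposing the constraint at infinitely many places, it imposes $\wedge^2(\Im\varphi_v)\subset\Upsilon$ only at a \emph{finite} set $T$ of places, which keeps the problem squarely inside the hypotheses of Theorem~\ref{thm:main}. The asymptotic then has the \emph{same} exponent $\varpi(k,G,\mathcal{A})$ but a leading constant $c_{k,G,\Lambda_T}$ depending on~$T$. Using the explicit formula for the constant (Theorem~\ref{thm:leading_constant}), one shows $c_{k,G,\Lambda_T}/c_{k,G,\mathcal{A},\mathrm{loc}}\to 0$ as $T$ exhausts~$\Omega_k$, by exhibiting at each completely split place a ramified character with $\wedge^2(\Im\chi_v)\not\subset\Upsilon$. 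So the paper trades your ``smaller exponent'' for ``same exponent, vanishing constant in the limit'', which lets it reuse Theorem~\ref{thm:main} off the shelf.
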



In particular Theorem \ref{thm:HNP_rare} implies that
$$
 	\lim_{B \to \infty} \frac{N_{\mathrm{glob}}(k,G,\mathcal{A},B)}{N_{\mathrm{loc}}(k,G,\mathcal{A},B)} =1.
 	$$
Theorem \ref{thm:global} can thus be proved via  an asymptotic formula for $N_{\mathrm{loc}}(k,G,\mathcal{A},B)$, which we obtain in Theorem \ref{thm:loc}. We prove Theorem \ref{thm:HNP_rare} using a purely local criterion for failure of the Hasse norm principle (Proposition \ref{prop:HNP}).
Taking $\mathcal{A}=\{1\}$ in Theorem~\ref{thm:HNP_rare}, we obtain the following result.
\begin{corollary}\label{cor:100HNP}
	Let $k$ be a number field and $G$ a finite abelian group. Then $100\%$ of $G$-extensions of $k$, ordered by conductor, satisfy the Hasse norm principle.
\end{corollary}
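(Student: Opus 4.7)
The approach is to derive the corollary directly from Theorem~\ref{thm:HNP_rare} by specialising to the trivial subgroup $\mathcal{A} = \{1\} \subset k^*$. The content of the corollary is entirely encoded in the more general theorem, so the only task is to verify that the specialised statement really does coincide with the claim about $100\%$ of $G$-extensions satisfying the Hasse norm principle.

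The first step is to observe that the element $1 \in k^*$ is automatically a norm (both locally and globally) from any field extension $K_\varphi/k$, since $1 = \Norm_{K_\varphi/k}(1)$. Therefore the condition $\{1\} \subset \Norm_{K_\varphi/k} \Adele_{K_\varphi}^*$ holds for every $\varphi \in \gextk$, and in view of the definition \eqref{def:counting_functions} we obtain
\begin{equation*}
  N_{\mathrm{loc}}(k,G,\{1\},B) \;=\; N(k,G,B).
\end{equation*}
In other words, for $\mathcal{A} = \{1\}$ the denominator appearing in Theorem~\ref{thm:HNP_rare} is precisely the total number of $G$-extensions of $k$ with conductor at most $B$.

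The second step is to interpret the numerator. Again by the vacuity of the local norm condition, the numerator in Theorem~\ref{thm:HNP_rare} with $\mathcal{A} = \{1\}$ counts exactly those $\varphi$ with $\Phi(\varphi) \le B$ for which $K_\varphi/k$ fails the Hasse norm principle. Substituting into Theorem~\ref{thm:HNP_rare} thus yields
\begin{equation*}
  \lim_{B \to \infty} \frac{\#\{ \varphi \in \gextk : \Phi(\varphi) \leq B,\; K_\varphi \text{ fails the HNP} \}}{N(k,G,B)} \;=\; 0,
\end{equation*}
which is precisely the statement of Corollary~\ref{cor:100HNP}.

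There is no real obstacle at this stage: all the substantive analytic and algebraic work is carried out in the proof of Theorem~\ref{thm:HNP_rare}, whose specialisation instantly yields the corollary. The only thing to double-check is the trivial consistency observation $N_{\mathrm{loc}}(k,G,\{1\},B) = N(k,G,B)$, after which the corollary is immediate.
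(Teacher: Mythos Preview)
Your proof is correct and follows exactly the paper's approach: the paper derives the corollary simply by taking $\mathcal{A}=\{1\}$ in Theorem~\ref{thm:HNP_rare}, which is precisely what you do, with the same trivial observation that $N_{\mathrm{loc}}(k,G,\{1\},B)=N(k,G,B)$.
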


Corollary~\ref{cor:100HNP} stands in stark contrast to the results of \cite{HNP}, where a dichotomy occurs when counting by discriminant: in \emph{op.~cit.}~we showed that for certain finite abelian groups $G$ a positive proportion of $G$-extensions can fail the Hasse norm principle, when ordered by discriminant. This contrasting behaviour illustrates the fact, already observed by Wood in \cite{Woo10}, that counting by conductor often leads to more natural statements than counting by discriminant. In fact, after seeing the results we obtained in \cite{HNP} when counting extensions ordered by discriminant, Wood remarked that the dichotomy we had observed should disappear when ordering by conductor, and conjectured the statement of Corollary~\ref{cor:100HNP}. 

There are two reasons why it seems quite difficult to prove Theorem \ref{thm:existence} when counting by discriminant, rather than conductor. Firstly, the condition that every element of $\mathcal{A}$ is a norm everywhere locally may be only rarely satisfied and, in the setting of  \cite[Thm.~1.4]{HNP} where a positive proportion of $G$-extensions fail the Hasse norm principle, it becomes challenging to show the existence of a $G$-extension for which every element of $\mathcal{A}$ is a norm everywhere locally and the Hasse norm principle holds. Secondly, the leading constant obtained when counting by discriminant is very complicated, with potential for further cancellation, so it is difficult to prove its positivity, whereas when counting by conductor we have a simple criterion for positivity of the leading constant (see Theorem~\ref{thm:main}).

The counting techniques employed in this paper are fairly robust and enable us to prove a strengthening of Theorem \ref{thm:existence} in which we impose local conditions at finitely many places. See Theorem \ref{thm:main} and Corollary \ref{cor:existence_S} for precise statements.

Our work on the statistical behaviour of the Hasse norm principle brings together two major areas of modern number theory: namely, counting within families of number fields, and the quantitative study of the failure of local-global principles. Notable recent papers on the statistics of number fields include \cite{ASVW17}, \cite{BL18}, \cite{BST13}, \cite{BV15}, \cite{EPW17}, \cite{FW17}, \cite{HSV18}, \cite{PTBW17} and \cite{Woo17}. Some significant contributions to the  study of local-global principles in families include \cite{Bha14c},  \cite{BB14b}, \cite{BB14a}, \cite{BBL}, \cite{EJ15}, \cite{LS16} and \cite{MacedoAn}. For a summary of recent progress on counting failures of the Hasse principle, see \cite{BrowningSurvey}. More specifically, the statistical behaviour of the Hasse norm principle is examined in \cite{BN15}, \cite{Macedo18} and \cite{Rome17}. In particular, in \cite{Rome17} Rome obtains an asymptotic formula for the number of biquadratic extensions of $\QQ$ (ordered by discriminant) which fail the Hasse norm principle. Obtaining asymptotic formulae for the number of such failures for other classes of field extensions would seem to be an interesting problem.

Below, we give some examples illustrating our results in a variety of settings to demonstrate the wide range of phenomena manifested by norms in extensions of number fields.
\begin{examples} \label{examples} \hfill
\begin{enumerate}
	\item Take $G = \ZZ/n\ZZ$ with $8 \nmid n$ and $\alpha \in k^*$ not an $n$th power.
	Then Corollary~\ref{cor:norm_rare} implies that for $100\%$ of all $\ZZ/n\ZZ$-extensions of $k$ ordered by conductor, $\alpha$ is not a norm.
	In the special case $n = 2$ of quadratic extensions, this result can be 
	proved using standard techniques in analytic number theory;
	all other cases are new.
	
	\item Take $k = \QQ, \alpha = 16$ and $G = \ZZ/8\ZZ$.
	As is well known, $16$ is an $8$th power in $\QQ_p^*$ for all odd primes $p$ and in $\RR^*$. It therefore follows from Theorems~\ref{thm:norm_rare} and~\ref{thm:positivenot1} that
	$16$ is a norm from every $\ZZ/8\ZZ$-extension $K/\QQ$, despite not being an $8$th power in $\QQ$.

	\item Take $k=\mathbb{Q}(\sqrt{17})$,  $\alpha = 16$ and $G = \ZZ/8\ZZ$. Then, as above, we see that $16$ is locally an $8$th power at all places $v$ such that $v\nmid 2$. Hence $16$ is a local norm from all $\ZZ/8\ZZ$-extensions of $k$ at all places $v\nmid 2$. However, let $\mathfrak{p}, \mathfrak{q}$ be the two primes of $k$ above $2$.  By \cite[Thm.~9.2.8]{NSW08} there exists a $\mathbb{Z}/8\ZZ$-extension $F/k$ such that $F_\mathfrak{p}/k_\mathfrak{p}$ is unramified of degree $8$. Therefore, $16$ is not a local norm from $F_\mathfrak{p}/k_\mathfrak{p}$,
	and consequently not a global norm from $F/k$. Given the existence of one such an extension, an application of \cite[Cor.~1.7]{Woo10} (or Theorem \ref{thm:main}) yields the existence of a positive proportion of $\ZZ/8\ZZ$-extensions $K/k$ which are unramified of degree $8$ over $\fp$, thus the limit \eqref{eqn:norm_rare} is positive but not equal to $1$ in this case.

	Let us explain in more detail why \cite[Thm.~9.2.8]{NSW08} applies here but not in the previous example. Recall that a place $v$ of a number field $L$ is said to \emph{split} (or \emph{decompose}) in an extension $M/L$ if there exist at least two distinct places of $M$ above $v$. All places of $\QQ$ apart from $2$ split in the non-cyclic extension $\QQ(\mu_8)/\QQ$, so that $(\QQ,8,\Omega_\QQ\setminus\{2\})$ is a so-called special case and \cite[Thm.~9.2.8]{NSW08} does not apply in example (2). However, in example (3), $\mathfrak{q}$ is non-split in $k(\mu_8)/k$: both $\mathfrak{p}$ and $\mathfrak{q}$ are totally ramified in $k(\mu_8)/k$, since $2$ is split in $k/\QQ$ and totally ramified in $\QQ(\mu_8)/\QQ$. Therefore, $(k,8,\Omega_k\setminus\{\mathfrak{p}\})$ is not a special case and \cite[Thm.~9.2.8]{NSW08} can be applied in example (3).

	\item Take $k=\QQ, \alpha = 5^2$ and $G=(\ZZ/2\ZZ)^2$. A simple argument
	(cf.~Lemma~\ref{lem:Rachel}) shows that $5^2$ is a norm everywhere locally
	from \emph{every} biquadratic extension of $\QQ$. By Theorem \ref{thm:HNP_rare},
	it is thus a global norm from $100\%$ of biquadratic extensions of $\QQ$ ordered by conductor.
	However, $5^2$ is not a global norm from $K=\QQ(\sqrt{13},\sqrt{17})$ (failure of the Hasse norm principle \cite[p.~360, Exercise 5.3]{CF}). Therefore, it is not true that $5^2$ is a global
	norm from \emph{every} biquadratic extension of $\QQ$.
		\end{enumerate}
	
\end{examples}

\begin{remark}
A simple application of local class field theory (Lemma~\ref{lem:Rachel}) shows that every element of $k^{*e}$ is everywhere locally a norm from every $G$-extension of $k$, where $e$ denotes the exponent of $G$. Using this, one can show that in our results, the assumption that $\mathcal{A}$ is a finitely generated subgroup of $k^*$ can be replaced by the weaker assumption that the image of $\mathcal{A}$ in $k^*/k^{*e}$ is finite. We have chosen to make the stronger assumption as it simplifies the exposition and some technical aspects of the proofs.
\end{remark}

We finish with a simple example which solves the problem analogous to Theorem \ref{thm:existence} for field extensions of degree $n$ with maximal Galois group.

\begin{example} \label{ex:S_n}
Let $\alpha \in \QQ^*$ and $n\geq 3$. Then the polynomial 
      \begin{equation*}
          x^n+c x^{n-1}+t x+(-1)^{n}\alpha
        \end{equation*}
has Galois group $S_n$ over $\QQ(t)$ for all but finitely many $c\in \QQ $ (see \cite[Satz 1]{MR0276203}). Therefore, Hilbert's irreducibility theorem implies that for infinitely many specialisations $t \in \QQ$, the Galois group is $S_n$, and $\alpha$ is clearly a norm from such an extension, being the product of the roots of the defining polynomial.
\end{example}

\subsection{Methodology and structure of the paper} \label{sec:structure}
In \S\ref{sec:frob} we recall some of the theory of \emph{frobenian functions} from Serre's book \cite[\S 3.3]{Ser12}, in order to help analyse the Dirichlet series which arise in this paper.

In \S \ref{sec:main} we prove our main technical result, Theorem \ref{thm:main}. This is a general theorem for counting abelian extensions with local conditions imposed.  To prove this  we study the analytic properties of the Dirichlet series corresponding to our counting functions. We achieve this with the help of the harmonic analysis techniques developed in our earlier paper \cite{HNP}. In our case, however, the analysis is more difficult as the singularities of our Dirichlet series will be \emph{branch point} singularities, rather than poles, in general; this is reflected in the fact that $\varpi(k,G,\mathcal{A})$ in Theorem \ref{thm:global} can be a \emph{non-integral rational number}.  This section is the technical heart of the paper and is dedicated to the proof of Theorem \ref{thm:main}. 

Let us emphasise once more that we prove Theorem \ref{thm:existence} by first counting the extensions of interest and then showing that the leading constant obtained is positive. Our situation presents an interesting difficulty, however: the leading constant we obtain is not an Euler product but a \emph{sum} of Euler products and, in general, cancellation within these sums may occur for some choices of local conditions. For example, a famous theorem of Wang \cite{Wan50} says that there is no $\ZZ/8\ZZ$-extension of $\QQ$ which realises the unramified extension of $\QQ_2$ of degree $8$; in this case Wright observed in \cite[p.~48]{Wri89} that the Euler products appearing in the leading constant cancel out. We have to carefully analyse these sums of Euler products and explicitly show that no cancellation occurs in our case.

In \S \ref{sec:proofs}, we prove the major results stated in the introduction via suitable applications of Theorem \ref{thm:main} combined with Galois-cohomological techniques. At the end of \S \ref{sec:proofs} we also give a generalisation of Theorem \ref{thm:existence} which allows one to impose  local conditions on the abelian extension $K/k$ at finitely many places.

The appendix (by Yonatan Harpaz and Olivier Wittenberg) contains a purely geometric proof of Theorem \ref{thm:existence}. It uses descent and a version of the fibration method developed in \cite{HW18} to show that the Brauer--Manin obstruction controls the failure of weak approximation on a certain auxiliary variety. The existence of the required abelian extension is then shown using a version of Hilbert's irreducibility theorem due to Ekedahl \cite{ekedahl} (see also \cite[\textsection\textsection3.5--3.6]{serretopics}).

\subsection{Notation and conventions} \label{sec:notation}

We fix a number field $k$ throughout the paper and use the following notation:
\\ \vspace{-15pt} 
\begin{longtable}{ll}
$\Adele^*$ & the ideles of $k$\\
$\Adele_L^*$ & the ideles of a finite extension $L$ of $k$\\
$\OO_k$ &the ring of integers of $k$\\
$\Omega_k$ & the set of all places of $k$\\
$\OO_S$ &the $S$-integers of $k$\\
$v$ & a place of $k$\\
$k_v$ & the completion of $k$ at $v$\\
$\OO_v$ & the ring of integers of $k_v$. For $v\mid \infty$, by convention $\OO_v := k_v$\\
$\bbF_v$ & the residue field at a finite place $v$\\
$q_v$ & the cardinality of the residue field at a finite place $v$\\
$\zeta_k(s)$ & the Dedekind zeta function of $k$.\\
\end{longtable}

\noindent 
For locally compact abelian groups $A$ and $B$, we use the following notation:

\begin{longtable}{ll}
$\Hom(A,B)$ & the group of \emph{continuous} homomorphisms from $A$ to $B$, \\
& equipped with the
compact-open topology\\
$\dual{A}$ & the Pontryagin dual of $A$, $\dual{A} := \Hom(A, S^1)$\\
$\pair{\cdot}{\cdot}$ & the natural pairing $A\times \dual{A} \to S^1.$
\end{longtable}

\vspace{-0.3cm}

\noindent All finite groups are viewed as topological groups with the discrete topology.

For a place $v$ of $k$, a finite abelian group $G$, and $\chi \in
\Hom(k_v^*,G)$, we denote by $\Phi_v(\chi_v)$
the reciprocal of the $v$-adic norm of the conductor of $\Ker\chi_v$. For every $\chi\in\Hom(\Adele^*/k^*,G)$, we let $\Phi(\chi)$ be the reciprocal of the idelic norm of the conductor of the kernel of $\chi$; this equals the norm $\Phi(\varphi)$ of the  conductor of the sub-$G$-extension  $\varphi$ corresponding to $\chi$ via the global Artin map.

Let $K/k$ be an extension of number fields and $\alpha \in k^*$. We say that $\alpha$ is a \emph{(global) norm from $K$} if $\alpha \in \Norm_{K/k} K^*$. We say that $\alpha$ is a \emph{local norm at $v$ from $K$} if $\alpha \in \prod_{w\mid v} \Norm_{K_w/k_v} K_w^*\subset k_v^*$; if $K/k$ is Galois this is equivalent to the existence of \emph{some} place $w \mid v$ of $K$ such that $\alpha \in \Norm_{K_w/k_v} K_w^*$.

If $F$ is a field which contains $d$ distinct $d$th roots of unity and $\mathcal{A} \subset F^*$ is a finitely generated subgroup, then we denote by $F(\sqrt[d]{\mathcal{A}})$ the splitting field of the polynomials $x^d - \alpha$, where $\alpha$ runs over a set of generators of $\mathcal{A}$. 

For a subgroup $\mathcal{A} \subset k^*$ and a place $v$ of $k$, we denote by $\mathcal{A}_v$ the image of $\mathcal{A}$ in $k_v^*$.

\begin{ack} Our work on this project began at Michael Stoll's workshop \emph{Rational Points 2017} held at Franken-Akademie Schloss Schney. Substantial progress was made when the third author visited the other two at the University of Manchester, and also at the workshop \emph{Rational and Integral Points via Analytic and Geometric Methods} organised by Tim Browning, Ulrich Derenthal and Cec\'{i}lia Salgado at Hotel Hacienda Los Laureles, Oaxaca. We are very grateful to the organisers of both workshops, to the funding bodies, and to the staff at all three places for providing us with excellent working conditions. We thank the anonymous referee for a meticulous reading of an earlier draft of this paper and for several suggestions that helped improve it. The first-named author is supported by EPSRC-grant
EP/T01170X/2. The second-named author is supported by EPSRC grant EP/R021422/1 and UKRI Future Leaders Fellowship MR/V021362/1. The third-named author is supported by EPSRC grant EP/S004696/1 and UKRI Future Leaders Fellowship MR/T041609/1.
\end{ack}

\section{Frobenian functions} \label{sec:frob}

For the proofs of our main results, we will require some of the theory of frobenian functions, as can be found in Serre's book \cite[\S 3.3]{Ser12}.
Recall that a \emph{class function} on a group is a function which is constant on conjugacy classes.

\begin{definition} \label{def:frob}
	Let $k$ be a number field and 
	$\rho: \Val \to \CC$ a function on the set of places of $k$.
	Let $S$ be a finite set of places of $k$.
	We say that $\rho$ is $S$-\emph{frobenian} if there exist 
	\begin{enumerate}
		\item[(a)] a finite Galois extension $K/k$, with Galois group $\Gamma$, such that $S$ contains all places which ramify in $K/k$, and
		\item[(b)] a class function $\varphi: \Gamma \to \CC$,
	\end{enumerate}
	such that for all $v \not \in S$ we have
	$$\rho(v) = \varphi(\Frob_v),$$
	where $\Frob_v \in \Gamma$ denotes a Frobenius element of $v$.
	We say that $\rho$ is
        \emph{frobenian} if it is $S$-frobenian
	for some $S$.
	A subset of $\Val$ is called ($S$-)\emph{frobenian} if its indicator function is ($S$-)frobenian.	
\end{definition}

In Definition \ref{def:frob}, we adopt a common abuse of notation (see \cite[\S3.2.1]{Ser12}), and denote by $\Frob_v \in \Gamma$ the choice of some element of the Frobenius conjugacy class at $v$; note that $\varphi(\Frob_v)$ is well defined as $\varphi$ is a class function.

We define the \emph{mean} of $\rho$ to be 
$$m(\rho) = \frac{1}{|\Gamma|} \sum_{\gamma \in \Gamma}\varphi(\gamma)\in\CC.$$ 
	
\begin{example} \label{ex:frob_set}
	Let $f \in k[x]$ be a (not necessarily irreducible) polynomial. Then the
	set 
	$$\{ v \in \Omega_k : f(x) \text{ has a root in } k_v\}$$
	is frobenian. Indeed, take $K$ to be the splitting field of $f$.
	Then for a place $v$ which is unramified in $K$, the polynomial
	$f$ has a root in $k_v$ if and only if $\Frob_v$ acts with a fixed point on the roots of $f$ over $\bar{k}$;
	the set of such elements is a
	conjugacy invariant subset of the Galois group $\Gamma$.
\end{example}

We require the following result on the zeta function of a frobenian function. Throughout the paper, we write $q_v$ for the size of the residue field at a finite place $v$. Moreover, for any place $v$, let $\zeta_{k,v}(s)$ be the Euler factor of $\zeta_{k}(s)$ at $v$ if $v$ is non-archimedean, and $\zeta_{k,v}(s)=1$ otherwise.

\begin{proposition} \label{prop:frob_zeta}
	Let $S$ be a finite set of places of $k$ containing all archimedean places and
	let $\rho$ be an $S$-frobenian function. Assume that $|\rho(v)|<q_v$ holds for all $v\notin S$. Then the Euler product 
	\begin{equation} \label{def:Euler_S}
		F(s) = \prod_{v\notin S}\left(1 + \frac{\rho(v)}{q_v^s}\right)
              \end{equation}
              has the form
              \begin{equation}\label{eq:analyt_cont}
                F(s)=\zeta_k^{m(\rho)}(s)G(s),\quad\re s>1,                
              \end{equation}
for a function $G(s)$ that is holomorphic in a region 
              \begin{equation}\label{eq:SD}
                \re s > 1-\frac{c}{\log(|\im s|+3)},
              \end{equation}
              for some $c=c_\rho>0$, and satisfies in this region the bound
              \begin{equation}\label{eq:zero_free_bound}
                |G(s)|\ll_\rho (1+|\im s|)^{1/2}.
              \end{equation}
              Moreover, 
              \begin{equation}\label{eq:ep_at_one}
                \lim_{s\to 1}(s-1)^{m(\rho)}F(s)=(\mathrm{Res}_{s=1}\zeta_k(s))^{m(\rho)}\prod_{v\notin S}\frac{1+\rho(v)q_v^{-1}}{\zeta_{k,v}(1)^{m(\rho)}}\prod_{v\in S}\frac{1}{\zeta_{k,v}(1)^{m(\rho)}},
              \end{equation}
              and the limit in \eqref{eq:ep_at_one} is non-zero.
\end{proposition}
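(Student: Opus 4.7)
The plan is to factor $F(s)$ as a suitable complex power of $\zeta_k(s)$ times a product of non-trivial Artin $L$-functions, and then appeal to the standard analytic properties of such $L$-functions. First, since $\varphi$ is a complex-valued class function on $\Gamma$, I would decompose it along the irreducible characters $\chi$ of $\Gamma$ as $\varphi=\sum_\chi c_\chi\chi$ with $c_\chi=|\Gamma|^{-1}\sum_{\gamma\in\Gamma}\varphi(\gamma)\overline{\chi(\gamma)}$ by orthogonality; the trivial character then contributes $c_{\mathbf{1}}=m(\rho)$.

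Next, compare log-Euler products. Taking logarithms of $F(s)$ gives
$$\log F(s)=\sum_{v\notin S}\frac{\rho(v)}{q_v^s}+R_1(s),$$
where $R_1(s)=\sum_{v\notin S}\sum_{n\geq 2}(-1)^{n+1}\rho(v)^n/(nq_v^{ns})$. The boundedness of $\rho$ outside $S$ (since $\Gamma$ is finite) combined with the hypothesis $|\rho(v)|<q_v$ ensures that $R_1$ converges absolutely and is holomorphic and bounded on $\re s>1/2$. An analogous expansion for $\log L(s,\chi,K/k)$ isolates the main term $\sum_{v\notin S}\chi(\Frob_v)/q_v^s$ up to a holomorphic bounded remainder in the same region. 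Substituting the character decomposition of $\varphi$ and subtracting yields
$$\log F(s)=\sum_\chi c_\chi\log L(s,\chi,K/k)+H(s)$$
with $H$ holomorphic and bounded on $\re s>1/2$. Exponentiating and using $L(s,\mathbf{1},K/k)=\zeta_k(s)$ produces
$$F(s)=\zeta_k(s)^{m(\rho)}\cdot\prod_{\chi\neq\mathbf{1}}L(s,\chi,K/k)^{c_\chi}\cdot e^{H(s)}.$$

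To establish the analytic continuation \eqref{eq:analyt_cont} in the domain \eqref{eq:SD} along with the bound \eqref{eq:zero_free_bound}, I would invoke Brauer's induction theorem to express each non-trivial $L(s,\chi,K/k)$ as a finite product and quotient of Hecke $L$-functions, and then appeal to their classical zero-free region $\re s>1-c/\log(|\im s|+3)$ together with Phragm\'en--Lindel\"of convexity bounds in vertical strips. Since each such Hecke $L$-function is both holomorphic and non-vanishing in this simply connected region, one can choose a single-valued holomorphic branch of $L(s,\chi,K/k)^{c_\chi}$ there, which packaged together gives a holomorphic $G(s)$ satisfying the growth estimate. I expect the main obstacle to be pinning down these complex powers consistently after Brauer induction, since the exponents $c_\chi$ are typically non-integral and the branches across the various Hecke $L$-functions must be chosen compatibly.

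Finally, for the formula \eqref{eq:ep_at_one}, I would regularise by dividing out local $\zeta$-factors: writing
$$F(s)=\zeta_k(s)^{m(\rho)}\prod_{v\in S}\zeta_{k,v}(s)^{-m(\rho)}\prod_{v\notin S}\frac{1+\rho(v)q_v^{-s}}{\zeta_{k,v}(s)^{m(\rho)}},$$
the last Euler product converges at $s=1$: its local factor at $v$ equals $1+(\rho(v)-m(\rho))q_v^{-1}+O(q_v^{-2})$, and the corresponding Dirichlet series $\sum_v(\rho(v)-m(\rho))q_v^{-s}$ is supported on the non-trivial characters, hence extends holomorphically across $s=1$ by the previous paragraph. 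Taking $s\to 1$ gives precisely \eqref{eq:ep_at_one} after pairing with $(\mathrm{Res}_{s=1}\zeta_k(s))^{m(\rho)}$. The non-vanishing of the limit then reduces, via the character decomposition, to the classical fact that $L(1,\chi)\neq 0$ for every non-trivial irreducible Artin character $\chi$.
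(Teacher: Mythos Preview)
Your approach is essentially the same as the paper's: decompose the class function along irreducible characters, factor $F(s)$ into complex powers of Artin $L$-functions times an absolutely convergent product, reduce the non-trivial $L$-functions to Hecke $L$-functions via Brauer induction, and then use their standard zero-free regions and growth bounds. Your worry about choosing branches of $L(s,\chi)^{c_\chi}$ is handled exactly as you say, since the zero-free region is simply connected.

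There is, however, a genuine gap in your argument for \eqref{eq:ep_at_one}. You claim that the regularised Euler product $\prod_{v\notin S}(1+\rho(v)q_v^{-s})/\zeta_{k,v}(s)^{m(\rho)}$ converges at $s=1$ because the Dirichlet series $\sum_v(\rho(v)-m(\rho))q_v^{-s}$ ``extends holomorphically across $s=1$''. But holomorphic continuation of a Dirichlet series beyond its half-plane of absolute convergence does not, by itself, yield convergence of the series (or of the associated Euler product) on the boundary. What you actually need is that, for each non-trivial $\chi$, the Dirichlet series $\sum_{v}\chi(\Frob_v)q_v^{-s}$ \emph{converges} at $s=1$ (conditionally), and that its sum there agrees with the analytic value $\log L(1,\chi)$. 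The paper closes this gap explicitly: one uses the prime number theorem for $L(s,\chi)$ to bound the partial sums $\sum_{q_v\le x}\chi(\Frob_v)$, then partial summation to get convergence of $\sum_v\chi(\Frob_v)q_v^{-1}$, and finally Abel's theorem to identify the sum with $\lim_{s\to 1^+}\log L(s,\chi)$. Once convergence is established, non-vanishing follows either from your reduction to $L(1,\chi)\neq 0$ or, more simply, by noting directly that each local factor $(1+\rho(v)q_v^{-1})$ is non-zero under the hypothesis $|\rho(v)|<q_v$.
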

\begin{proof}
	First, note that the Euler factors $1+\rho(v)q_v^{-s}$ 
	are holomorphic on $\CC$ and non-zero for $\re s \geq 1$, 
	as $|\rho(v)| < q_v$ by assumption.
	Next, recall that the irreducible characters of a finite group $\Gamma$
	form a basis for the space of complex class functions of $\Gamma$ 
	\cite[Prop.~2.30]{FH91}.
	In particular, if $\varphi:\Gamma\to \CC$ is the class function associated to $\rho$, then we may write
	$$\varphi = \sum_{\chi} \lambda_\chi \chi$$
	where $\lambda_\chi \in \CC$ and the sum runs over the irreducible
	characters of $\Gamma$. For $\re s>1$, we find that
	\begin{align*}
          F(s)
	&= \prod_{v \notin S}
	\left(1 + \frac{ \sum_{\chi} \lambda_\chi \chi(\Frob_v)}{q_v^s} \right) = G_1(s)\prod_{\chi}L(\chi,s)^{\lambda_\chi},
	\end{align*}
	where $L(\chi,s)$ denotes the Artin $L$-function of $\chi$ and $G_1(s)$ is a holomorphic function with absolutely convergent Euler product on $\re s > 1/2$, which is non-zero on $\re s \geq 1$.
	
	For the trivial character $\chi=\one$, we have $L(\one,s) = \zeta_k(s)$. Since $\lambda_\one = m(\rho)$, we get the equality \eqref{eq:analyt_cont} with
        \begin{equation*}
G(s)=G_1(s)\prod_{\chi\neq\one}L(\chi,s)^{\lambda_\chi}.
\end{equation*}
By the Brauer induction theorem \cite[Thm.~VIII.7, p.~225]{CF}, we may decompose each remaining $L(\chi,s)$ as a product of $\ZZ$-powers of Hecke $L$-functions of non-trivial Hecke characters of subfields of $K$.
Hence, we assume from now on that each $L(\chi,s)$ is an entire Hecke $L$-function (for some possibly different number field). By \cite[Thm.~5.35]{IK04}, $L(\chi,s)$ respects a zero-free region of the form \eqref{eq:SD}, for some $c<1/4$ that may depend on $\chi$. Since there are only finitely many characters to consider, we can find a constant $c$ that works for all of them. Decreasing $c$ further, we obtain a bound
\begin{equation*}
  \log|L(\chi,s)|\ll \log\log(|\im s|+3), 
\end{equation*}
valid in the region \eqref{eq:SD} (cf. \cite[p.230]{Ser76}).
Using this bound and the fact that $|G_1(s)|\ll_\rho 1$ in $\re s\geq 3/4$ due to
the absolute convergence of its Euler product, it is simple to verify that $G(s)$ satisfies \eqref{eq:zero_free_bound}.

To verify \eqref{eq:ep_at_one}, we start with the following fact, which is well known at least in the classical case of Dirichlet $L$-functions: for non-trivial $\chi$, the Euler product of $L(\chi,s)$ converges for $s=1$ and takes the value $L(\chi,1)$. To see this, observe that $\log L(\chi,s)$ can be defined for $\re s>1$ as a Dirichlet series, use the prime number theorem for $L(\chi,s)$ (see \cite[Thm.~5.13]{IK04}) and partial summation to verify that this Dirichlet series converges for $s=1$, and apply Abel's theorem.

Since $G_1(s)$ has an absolutely convergent Euler product for $\re s>1/2$, this shows that the Euler product of $\zeta_k(s)^{-m(\rho)}F(s)=G(s)$ does indeed converge at $s=1$ and takes the value
\begin{equation*}
G(1)=\lim_{s\to 1}\zeta_k(s)^{-m(\rho)}F(s)=(\mathrm{Res}_{s=1}\zeta_k(s))^{-m(\rho)}\lim_{s\to 1}(s-1)^{m(\rho)}F(s).
\end{equation*}
Recalling our assumption that $|\rho(v)|<q_v$, it is clear that the right-hand side of \eqref{eq:ep_at_one} is non-zero.
\end{proof}

\begin{remark} \hfill
\begin{enumerate}
	\item Note that frobenian functions are bounded; thus the condition
	 $|\rho(v)| < q_v$ in Proposition \ref{prop:frob_zeta}
	is always satisfied for all but finitely many $v$.
	
	\item The conclusion
	\eqref{eq:ep_at_one} may fail if one includes the places $v\in S$ in the Euler product in Proposition~\ref{prop:frob_zeta}. To see this, take $k = \QQ, \rho(2) = -2$ and $\rho(p) = 0$
	for $p \neq 2$; this is frobenian with $K = \QQ$ and $S = \{2\}$.
	Then the  Euler factor
	$$1 - \frac{2}{2^s}$$
	has a zero at $s=1$, despite the fact that $m(\rho) = 0$. 
	
	\item The conclusion \eqref{eq:ep_at_one} can fail to hold for some
	innocuous looking Dirichlet series. Consider for example
	$F(s) = \zeta(2s -1)/\zeta(s)$. Then $\lim_{s\to 1} F(s) = 1/2$,
	but $\prod_p \lim_{s \to 1}(1-p^{-s})/(1-p^{-2s+1}) = 1$.
\end{enumerate}	
\end{remark}

\section{Counting with local conditions} \label{sec:main}

All of the main counting results in this paper are obtained from a more general counting
result, which we present in this section. To state this result we require some notation.

\subsection{Statement of the result} \label{sec:statement_main}
Let $G$ be a finite abelian group, let $F$ be a field and $\bar{F}$ a separable closure of $F$. We define a \emph{sub-$G$-extension} of $F$ to be a continuous homomorphism $\Gal(\bar{F}/F)\to G$. A sub-$G$-extension corresponds to a pair $(L/F,\psi)$, where $L/F$ is a Galois extension inside $\bar{F}$ and $\psi$ is an injective homomorphism $\Gal(L/F)\to G$. 

For each place $v$ of the number field $k$, we fix an algebraic closure $\bar{k}_v$ and compatible embeddings $k\hookrightarrow\bar{k}\hookrightarrow\bar{k}_v$ and $k\hookrightarrow k_v \hookrightarrow \bar{k}_v$. 

Hence, a sub-$G$-extension $\varphi$ of $k$ induces a sub-$G$-extension $\varphi_v$ of $k_v$ at every place $v$. For each place $v$ of $k$, let $\Lambda_v$ be a set of sub-$G$-extensions of $k_v$. For $\Lambda := (\Lambda_v)_{v \in \Omega_k}$ we are interested in the function
\begin{equation} \label{def:counting_conditions}
N(k,G,\Lambda,B) := \#\left\{\varphi\in\gextk:\, \Phi(\varphi)\leq B, \, \varphi_v \in \Lambda_v \forall v \right\},
\end{equation}
which counts those $G$-extensions of $k$ of bounded conductor which satisfy the local conditions imposed by $\Lambda$ at all places $v$. (Here $\Phi$ is as in \S\ref{sec:notation}.)

In general, it is difficult to say anything about the counting function given in \eqref{def:counting_conditions}, especially when there are infinitely many local conditions imposed. Even in the case when one imposes finitely many conditions, the set being counted may be empty, as explained in \S \ref{sec:structure}.
Our main technical result imposes arbitrary conditions at finitely many places, but at the remaining places we only impose those conditions which force every element of $\mathcal{A}$ to be a local norm.

\begin{theorem} \label{thm:main}
	Let $k$ be a number field, $G$ a non-trivial finite abelian group, and $\mathcal{A} \subset  k^*$ a finitely generated subgroup.
	Let $S$ be a finite set of places of $k$ and for $v \in S$ let 
	$\Lambda_v$ be a non-empty set of sub-$G$-extensions of $k_v$.
	For $v \notin S$
	we let $\Lambda_v$ be the set of sub-$G$-extensions of $k_v$ determined by
	those extensions of local fields $L/k_v$ for which every element of $\mathcal{A}$ is a local norm from $L/k_v$.
	Let $\Lambda := (\Lambda_v)_{v \in \Omega_k}$. Then there exist
	$ c_{k,G,\Lambda} \geq 0$ and $\delta=\delta(k,G,\mathcal{A}) >0$
	such that
	\begin{equation*}
		N(k,G,\Lambda,B) = c_{k,G,\Lambda}B(\log B)^{\varpi(k,G,\mathcal{A})-1}
		+  O(B(\log B)^{\varpi(k,G,\mathcal{A})-1-\delta}), \quad B\to\infty,
	\end{equation*}
	where 
	$\varpi(k,G,\mathcal{A})$ is as in Definition \ref{def:varpi_intro}.
	Moreover we have $c_{k,G,\Lambda} >0$ if there exists a 
	sub-$G$-extension of $k$
	which realises the given local conditions for all places $v$.
\end{theorem}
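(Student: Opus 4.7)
The plan is to encode $N(k,G,\Lambda,B)$ in the Dirichlet series
$$F(\Lambda,s)=\sum_{\substack{\varphi\in\gextk\\ \varphi_v\in\Lambda_v\,\forall v}}\Phi(\varphi)^{-s},$$
analyse its behaviour near $s=1$ via Proposition~\ref{prop:frob_zeta}, and extract the asymptotic by a Selberg--Delange style contour argument.

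The first step is to use class field theory to identify $G$-extensions of $k$ with surjective continuous homomorphisms $C_k:=\Adele^*/k^*\to G$, and then reduce to counting arbitrary continuous homomorphisms by Möbius inversion over subgroups $H\subseteq G$. Following the harmonic-analytic framework of \cite{HNP}, I would apply Pontryagin duality to decouple the global rationality constraint $k^*\subseteq\Adele^*$ from the prescribed local conditions; this expresses $F(\Lambda,s)$ as a finite $\CC$-linear combination of Euler products, whose local factors at $v\notin S$ are controlled by which local sub-$H$-extensions have $\mathcal{A}$ in the image of the local norm map.

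The key computation identifies the governing frobenian function. At an unramified place $v\notin S$, a local sub-$G$-extension corresponds to a Frobenius image $g\in G$ of some order $d=|g|$; by local class field theory and Kummer theory, the condition that every $\alpha\in\mathcal{A}$ is a local norm is equivalent to $v$ splitting completely in $k_d=k(\mu_d,\sqrt[d]{\mathcal{A}})$, a frobenian condition of density $1/[k_d:k]$. Summing these contributions over $g\in G\smallsetminus\{\id_G\}$ yields a frobenian function whose mean is precisely $\varpi(k,G,\mathcal{A})$. Proposition~\ref{prop:frob_zeta} then shows each of the Euler products to be $\zeta_k(s)^{\varpi(k,G,\mathcal{A})}$ times a function holomorphic and polynomially bounded in a region of the form~\eqref{eq:SD}; contour integration via Perron's formula, combined with the bound~\eqref{eq:zero_free_bound}, now delivers the asymptotic with power-saving logarithmic error, where $\delta$ depends only on the width of the zero-free region for the underlying Hecke $L$-functions (and hence only on $k$, $G$, and $\mathcal{A}$).

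The main obstacle is establishing $c_{k,G,\Lambda}>0$ whenever a global sub-$G$-extension realises $\Lambda$. After the Möbius--Fourier decomposition, $c_{k,G,\Lambda}$ appears as a finite $\CC$-linear combination of convergent Euler products and, as recalled in \S\ref{sec:structure} via Wang's theorem and Wright's observation \cite{Wri89}, such combinations can vanish through Grunwald--Wang type cancellation (the $\ZZ/8\ZZ$-over-$\QQ$ case being paradigmatic). I plan to repackage $c_{k,G,\Lambda}$ locally at each $v\in S$ as a weighted sum indexed by $\Lambda_v$ with weights encoding the dual characters of $G$, and to show that a global realisation forces the character sums at the different places of $S$ to combine constructively rather than cancel. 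In other words, the hypothesis of global realisability is precisely what rules out the relevant Grunwald--Wang obstruction, so the cancellation in the Wang--Wright example cannot occur under our hypothesis, yielding $c_{k,G,\Lambda}>0$.
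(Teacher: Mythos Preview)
Your strategy matches the paper's closely: Dirichlet series, M\"obius over $H\subset G$, class field theory, Poisson summation \`a la \cite{HNP}, frobenian analysis via Proposition~\ref{prop:frob_zeta}, then a Tauberian step. Two points need sharpening.

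First, the Euler products arising after Poisson are indexed by pairs $(H,x)$ with $H\subset G$ and $x\in\OO_S^*\otimes\dual{H}$, and they do \emph{not} all carry the exponent $\varpi(k,G,\mathcal{A})$: each has its own exponent $\varpi(k,H,\mathcal{A},x)$. You must show that the maximum is $\varpi(k,G,\mathcal{A})$, attained exactly when $H=G$ and $x$ lies in the finite set $\mathcal{X}(k,G,\mathcal{A})$ of those $x$ with $x_v\in\mathcal{A}_v\otimes\dual{G}$ for all but finitely many $v$; this is the content of Lemmas~\ref{lem:varpi_def}, \ref{lem:H<G} and~\ref{lem:leading_singularity}. (Two smaller slips: your ``Frobenius image $g$'' should be the image of a generator of tame inertia, equivalently of $\FF_v^*$ under $\OO_v^*\to G$, since the norm condition on $\mathcal{A}\subset\OO_v^*$ is vacuous on the unramified part; and since $\varpi$ is generally non-integral, the singularity at $s=1$ is a branch point, so one needs the Selberg--Delange method rather than a bare contour shift from Perron.)

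Second, your positivity plan is vague exactly where the paper's argument is sharp. Once one factors out the common Euler product over $v\notin S$ (the local factors there being independent of $x\in\mathcal{X}(k,G,\mathcal{A})$, by the very definition of $\mathcal{X}$), what remains is a sum over $(\chi_v)_{v\in S}\in\prod_{v\in S}\Lambda_v$, each weighted by the character sum $\sum_{x\in\mathcal{X}}\prod_{v\in S}\langle\chi_v,x_v\rangle$. By orthogonality this inner sum is either $|\mathcal{X}|$ or $0$---crucially, \emph{never negative}---so no cancellation can occur, and it suffices to exhibit a single $(\chi_v)_{v\in S}$ yielding $|\mathcal{X}|$. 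Taking $\chi_v=\psi_v$ for a global sub-$G$-extension $\psi$ realising $\Lambda$, one has $\prod_{v\in\Omega_k}\langle\psi_v,x_v\rangle=1$ because $\psi$ is trivial on $k^*$, while $\langle\psi_v,x_v\rangle=1$ for each $v\notin S$ because $x_v\in\mathcal{A}_v\otimes\dual{G}$ and $\mathcal{A}_v\subset\Ker\psi_v$; hence $\prod_{v\in S}\langle\psi_v,x_v\rangle=1$ for every $x\in\mathcal{X}$. This non-negativity-plus-one-witness mechanism is precisely how global realisability defeats the Grunwald--Wang cancellation, and it is the step your sketch does not yet contain.
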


The leading constant $c_{k,G,\Lambda}$ in this theorem is given by a finite sum of Euler products (see Theorem \ref{thm:leading_constant} for an explicit expression). Our condition for positivity is only the existence of some \emph{sub}-$G$-extension of $k$ which realises the given local conditions; we do not require the existence of a genuine $G$-extension of $k$, so we do not need to assume that the set of $G$-extensions being counted is non-empty to deduce the positivity of the constant. This means that one need only look for an extension with possibly smaller Galois group to prove positivity of the constant; we use this trick to great effect when proving Theorem \ref{thm:existence}.

We illustrate how one applies Theorem~\ref{thm:main} in some simple cases. Firstly, one counts the total number of $G$-extensions of $k$ by applying Theorem~\ref{thm:main} with $\mathcal{A} = \{1\}$ and no local conditions, i.e.~taking $\Lambda_v$ to be the set of all sub-$G$-extensions of $k_v$ for all places $v$. These local conditions are realised by the sub-$G$-extension given by the trivial extension $k/k$. For a more interesting example,  consider the case $\mathcal{A} = \{1\}$ and the trivial local conditions $\Lambda_v = \{\one\}$ for $v \in S$, which are again  realised by the trivial extension $k/k$. 
This gives the following corollary.  (Note that we do not need to avoid the places above $2$.)

\begin{corollary}
	Let $S$ be a finite set of places. Then a positive proportion
	of $G$-extensions of $k$, ordered by conductor,
	are completely split at all places in $S$.
\end{corollary}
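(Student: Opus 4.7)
The plan is to apply Theorem~\ref{thm:main} with $\mathcal{A}=\{1\}$ and, for each $v\in S$, the singleton local condition $\Lambda_v=\{\mathbf{1}_v\}$, where $\mathbf{1}_v$ is the trivial sub-$G$-extension of $k_v$ (i.e.\ the zero homomorphism $\Gal(\bar k_v/k_v)\to G$). First I would unpack what these conditions mean: by construction the localisation $\varphi_v$ of a $G$-extension $\varphi$ is the composition of $\varphi$ with the map $\Gal(\bar k_v/k_v)\to\Gal(\bar k/k)$ coming from our fixed embeddings, so $\mathrm{im}(\varphi_v)\subset G$ is precisely the decomposition group of $v$ in $K_\varphi/k$. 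Hence $\varphi_v=\mathbf{1}_v$ if and only if $v$ splits completely in $K_\varphi$, and $N(k,G,\Lambda,B)$ counts exactly the $G$-extensions of conductor $\leq B$ in which every place of $S$ splits completely.

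Next I would verify the hypotheses of Theorem~\ref{thm:main}. Since $\mathcal{A}=\{1\}$, the conditions at $v\notin S$ are vacuous: every sub-$G$-extension of $k_v$ lies in $\Lambda_v$. To deduce $c_{k,G,\Lambda}>0$, one needs a global sub-$G$-extension of $k$ that realises the chosen local conditions everywhere; the trivial homomorphism $\Gal(\bar k/k)\to G$ works, as it restricts to $\mathbf{1}_v$ at every place of $k$. Theorem~\ref{thm:main} then delivers
\[
N(k,G,\Lambda,B)=c_{k,G,\Lambda}\,B(\log B)^{\varpi(k,G,\{1\})-1}+O\!\left(B(\log B)^{\varpi(k,G,\{1\})-1-\delta}\right)
\]
with $c_{k,G,\Lambda}>0$.

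To finish, I would compare with the total count: applying Theorem~\ref{thm:main} with $S=\emptyset$ (or invoking Wood's \cite[Thm.~3.1]{Woo10} directly, as noted in the introduction) gives $N(k,G,B)\sim c_{k,G}\,B(\log B)^{\varpi(k,G,\{1\})-1}$ with $c_{k,G}>0$. Dividing the two asymptotics yields
\[
\lim_{B\to\infty}\frac{N(k,G,\Lambda,B)}{N(k,G,B)}=\frac{c_{k,G,\Lambda}}{c_{k,G}}>0,
\]
which is the desired positive proportion. I do not anticipate any real obstacle: the exponent of $\log B$ matches on both sides because $\mathcal{A}=\{1\}$ is unchanged by the local conditions, and the only mildly subtle point is the identification of the singleton condition $\{\mathbf{1}_v\}$ with ``$v$ splits completely'', together with recognising that the trivial global homomorphism supplies the sub-$G$-extension needed for positivity.
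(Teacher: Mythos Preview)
Your proposal is correct and follows essentially the same approach as the paper: set $\mathcal{A}=\{1\}$, take $\Lambda_v=\{\one\}$ for $v\in S$, observe that the trivial sub-$G$-extension $k/k$ realises these local conditions, and apply Theorem~\ref{thm:main}. You spell out more detail than the paper does (the identification of $\varphi_v=\one$ with complete splitting, and the explicit comparison with $N(k,G,B)$ to extract the positive proportion), but the argument is the same.
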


The rest of this section is dedicated to the proof of Theorem \ref{thm:main}.
All implied constants in the $O$ and $\ll$ notation are allowed to depend on $k,G, \mathcal{A}$ and $\Lambda$.

\subsection{The set of places $S$} \label{sec:S}
To prove Theorem \ref{thm:main}, we are free to increase the size of $S$ if we wish. Henceforth, we will assume that $S$ contains all archimedean places of $k$ and all places of $k$ lying above the primes
$p\leq |G|$,
that $\mathcal{A} \subset \OO_S^*$, and that $\OO_{S}$ has trivial class group.

The reader should note that many of the formulae which follow are only valid
for finite sets of places $S$ which satisfy these conditions.  For example, in
the case where $k=\QQ$, $G=\ZZ/8\ZZ$, $\mathcal{A}=\{1\}$, $S=\emptyset$, the
expression for the leading constant in Theorem \ref{thm:leading_constant} does
not hold. To compute $c_{k,G,\Lambda}$ in this instance, we may take
$S=\{\infty,2,3,5\}$ instead.

\subsection{Dirichlet series}
To prove Theorem \ref{thm:main} we study the associated Dirichlet
series
\begin{equation} \label{def:F_Lambda}
F_\Lambda(s)= \sum_{\varphi\in G\textrm{-ext}(k)}\frac{f_\Lambda(\varphi)}{\Phi(\varphi)^s},
\end{equation}
with $f_\Lambda$ the indicator function of those sub-$G$-extensions $\varphi\in\Hom(\Gal(\bar{k}/k),G)$ 
for which $\varphi_v\in\Lambda_v$ for all $v\in\Val$. Hence, $f_{\Lambda}$ is the product of the local indicator functions $f_{\Lambda_v}$ of $\Lambda_v$. As $|f_\Lambda(\varphi)|\leq 1$, this Dirichlet series defines a holomorphic function on $\re s>1$. (This follows from \cite[Lem.~2.10]{Woo10}, but also from the analysis later in this paper.)

\subsubsection{M\"{o}bius inversion} Recall that a $G$-extension of $k$ is a surjective continuous homomorphism $\varphi:\Gal(\bar{k}/k) \to G$. The condition that $\varphi$ be surjective is difficult to deal with, hence we perform a M\"{o}bius inversion to remove it. Let $\mu$ be the M\"obius function on isomorphism classes of finite abelian groups. That is, $\mu(G) = 0$ if $G$ has a cyclic subgroup of order $p^n$ with $p$ a prime and $n\geq 2$, $\mu(G_1\times G_2) = \mu(G_1)\mu(G_2)$ if $G_1$ and $G_2$ have coprime order, and $\mu((\ZZ/p\ZZ)^n) = (-1)^np^{n(n-1)/2}$ for a prime $p$ and $n \in \ZZ_{\geq 0}$. Let $f$ be a function on the subgroups of $G$. For subgroups $H\subset G$, we consider the function
\begin{equation*}
  g(H)=\sum_{J\subset H}f(J),
\end{equation*}
where the sum runs over all subgroups $J\subset H$. The M\"obius inversion formula for finite abelian groups \cite{Delsarte} states that
\begin{equation}\label{eq:mobius}
  f(G)=\sum_{H\subset G}\mu(G/H)g(H).
\end{equation}

\begin{lemma}\label{lem:mobius}
We have
\[F_\Lambda(s)= \sum_{H \subset G}\mu(G/H) 
\sum_{\varphi\in \Hom(\Gal(\bar{k}/k),H)}\frac{f_\Lambda(\varphi)}{\Phi(\varphi)^s}.\]
\end{lemma}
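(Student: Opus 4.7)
The plan is to apply the abelian M\"obius inversion formula \eqref{eq:mobius} to two natural functions on subgroups of $G$. For each subgroup $H\subset G$, set
\[
  f(H) = \sum_{\varphi \in H\text{-ext}(k)}\frac{f_\Lambda(\varphi)}{\Phi(\varphi)^s},
  \qquad
  g(H) = \sum_{\varphi \in \Hom(\Gal(\bar k/k),H)}\frac{f_\Lambda(\varphi)}{\Phi(\varphi)^s},
\]
where in each sum a homomorphism landing in $H$ is viewed as a sub-$G$-extension via the inclusion $H\hookrightarrow G$, so that both $f_\Lambda(\varphi)$ and $\Phi(\varphi)$ make sense. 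By construction $f(G)=F_\Lambda(s)$, so it suffices to establish the relation $g(H)=\sum_{J\subset H}f(J)$ and invoke \eqref{eq:mobius}.

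For the key identity relating $f$ and $g$, I would partition $\Hom(\Gal(\bar k/k),H)$ according to the image of each homomorphism. Every $\varphi\in\Hom(\Gal(\bar k/k),H)$ factors uniquely as $\iota\circ\tilde\varphi$, where $\tilde\varphi\colon\Gal(\bar k/k)\twoheadrightarrow J$ is a $J$-extension for $J=\im(\varphi)\subset H$ and $\iota\colon J\hookrightarrow H$ is the inclusion. This factorisation produces a bijection
\[
  \Hom(\Gal(\bar k/k),H)\ \longleftrightarrow\ \bigsqcup_{J\subset H}J\text{-ext}(k).
\]
Under this bijection I must check that the summand $f_\Lambda(\varphi)/\Phi(\varphi)^s$ is preserved. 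Both quantities are computed after composing with the fixed embedding into $G$, and this composition is the same whether one goes $J\hookrightarrow H\hookrightarrow G$ or $J\hookrightarrow G$ directly; hence $\Phi$ (which depends only on $\ker\varphi$) and $f_\Lambda=\prod_v f_{\Lambda_v}$ (whose factors depend only on the local sub-$G$-extensions $\varphi_v$) agree. Summing over $J\subset H$ then gives $g(H)=\sum_{J\subset H}f(J)$.

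Finally, I would apply \eqref{eq:mobius} with $H=G$, obtaining
\[
  F_\Lambda(s)=f(G)=\sum_{H\subset G}\mu(G/H)\,g(H),
\]
which is exactly the stated identity. The only point requiring genuine care is the compatibility check in the second paragraph, namely that $\Phi$ and $f_\Lambda$ are invariant under replacing a surjection onto a subgroup $J$ by its composition with $J\hookrightarrow H$; everything else is bookkeeping and a direct appeal to the Delsarte M\"obius inversion formula for finite abelian groups.
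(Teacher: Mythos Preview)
Your proof is correct and follows essentially the same approach as the paper: partition $\Hom(\Gal(\bar k/k),H)$ by image to obtain $g(H)=\sum_{J\subset H}f(J)$, then apply Delsarte's M\"obius inversion \eqref{eq:mobius}. Your version is simply more explicit about the compatibility of $f_\Lambda$ and $\Phi$ under the inclusions $J\hookrightarrow H\hookrightarrow G$, which the paper leaves implicit.
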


\begin{proof}
Sorting the sub-$H$-extensions $\varphi:\Gal(\bar k/k)\to H$ by their images, we get
  \begin{equation*}
\sum_{J\subset H}\sum_{\varphi\in J\textrm{-ext}(k)}\frac{f_\Lambda(\varphi)}{\Phi(\varphi)^s} = \sum_{\varphi\in\Hom(\Gal(\bar k/k),H)}\frac{f_\Lambda(\varphi)}{\Phi(\varphi)^s}.
\end{equation*}
Call the right-hand side $g(H)$ and apply M\"obius inversion \eqref{eq:mobius}.
\end{proof}
We now consider the contribution to $F_{\Lambda}(s)$ of each subgroup $H$ in turn. The contribution from $H=\{1\}$ is either $0$ or $1$. From now on we focus on the contributions of the non-trivial subgroups $H$.

\subsubsection{Class field theory}
Via global class field theory, we make the identification 
\begin{equation}\label{eq:cft}
  \Hom(\Gal(\bar{k}/k),H) = \Hom(\Adele^*/k^*,H).
\end{equation}
The canonical isomorphism \eqref{eq:cft} is induced by the global Artin map
$\Adele^*/k^* \to \Gal(k^{\textrm{ab}}/k)$. Using this isomorphism, we consider $f_{\Lambda}$ now as a function on $\Hom(\Adele^*/k^*,H)$. For every $\chi\in\Hom(\Adele^*/k^*,H)$, let $\Phi(\chi)$ be the reciprocal of the idelic norm of the conductor of the kernel of $\chi$, which is precisely the norm of the  conductor of the sub-$H$-extension corresponding to $\chi$. Together with Lemma \ref{lem:mobius}, this discussion shows the following:

\begin{lemma} \label{lem:CFT}
We have
\[F_\Lambda(s)= \sum_{H \subset G}\mu(G/H) 
\sum_{\chi \in \Hom(\Adele^*/k^*,H)}\frac{f_\Lambda(\chi)}{\Phi(\chi)^s}.\]
\end{lemma}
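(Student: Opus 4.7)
My plan is to treat Lemma~\ref{lem:CFT} as essentially a bookkeeping corollary of Lemma~\ref{lem:mobius} once one transports every ingredient across the global Artin reciprocity isomorphism \eqref{eq:cft}. Concretely, I would start from the identity
\[
F_\Lambda(s) = \sum_{H\subset G}\mu(G/H) \sum_{\varphi\in \Hom(\Gal(\bar k/k),H)} \frac{f_\Lambda(\varphi)}{\Phi(\varphi)^s}
\]
provided by Lemma~\ref{lem:mobius}, and for each subgroup $H\subset G$ rewrite the inner sum by pulling $\varphi$ back to an idelic character via the global Artin map $\Adele^*/k^*\to \Gal(k^{\mathrm{ab}}/k)$. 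The summation index then becomes $\chi\in\Hom(\Adele^*/k^*,H)$, bijectively and preservingly.

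The only thing to check is that the two pieces of the summand depending on $\varphi$ transport correctly to the corresponding pieces depending on $\chi$. For the denominator, I would recall the standard fact that the conductor of the finite abelian extension cut out by $\varphi$ agrees with the conductor of the open subgroup $\ker\chi\subset\Adele^*/k^*$; hence $\Phi(\varphi)=\Phi(\chi)$ after defining $\Phi(\chi)$ as the reciprocal of the idelic norm of the conductor of $\ker\chi$, exactly as in the preceding paragraph of the text. For the numerator, I would observe that $f_\Lambda(\varphi)=\prod_v f_{\Lambda_v}(\varphi_v)$ by construction, and that the local Artin map $k_v^*\to \Gal(k_v^{\mathrm{ab}}/k_v)$ is compatible with the global Artin map, so that the local component $\chi_v:k_v^*\to H$ of $\chi$ corresponds to $\varphi_v$. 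Thus the local condition $\varphi_v\in \Lambda_v$ becomes a condition on $\chi_v$, and one defines $f_\Lambda(\chi):=\prod_v f_{\Lambda_v}(\chi_v)$ accordingly; with this convention $f_\Lambda(\varphi)=f_\Lambda(\chi)$.

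There is no real obstacle here; the only point one must be careful about is that the bijection \eqref{eq:cft} is between \emph{all} continuous homomorphisms $\Gal(\bar k/k)\to H$ and \emph{all} continuous homomorphisms $\Adele^*/k^*\to H$ (not just surjective ones), which is exactly what is needed after the M\"obius inversion, because Lemma~\ref{lem:mobius} has already removed the surjectivity constraint. Putting these observations together immediately yields the stated formula for $F_\Lambda(s)$.
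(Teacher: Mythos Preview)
Your proposal is correct and follows essentially the same approach as the paper: the lemma is presented there as an immediate consequence of combining Lemma~\ref{lem:mobius} with the global Artin identification \eqref{eq:cft}, together with the definitions of $\Phi(\chi)$ and $f_\Lambda(\chi)$ on the idelic side given in the surrounding text. Your write-up simply spells out the compatibility checks (for $\Phi$ and for the local factors $f_{\Lambda_v}$) a bit more explicitly than the paper does, but there is no substantive difference.
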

Hence, in our analysis of $F_\Lambda(s)$ we can now focus on the inner sums
 \[\sum_{\chi\in \Hom(\Adele^*/k^*, H)}\frac{f_{\Lambda}(\chi)}{\Phi(\chi)^s}.\]
Our counting problem fits very well within the class-field-theoretic framework. For each place $v\in\Val$, we use local class field theory (specifically, the local Artin map $k_v^* \to \Gal(k_v^{\textrm{ab}}/k_v)$) to make the identification
\begin{equation*}
  \Hom(\Gal(\bar k_v/k_v),H)=\Hom(k_v^*,H).
\end{equation*}
Thus, we consider $\Lambda_v$ as a subset of $\Hom(k_v^*,H)$. By the compatibility of local and global class field theory, we still have $f_{\Lambda}=\prod_vf_{\Lambda_v}$, with $f_{\Lambda_v}$ the indicator function of $\Lambda_v$.

\begin{lemma} \label{lem:kernel}
	Let $v \notin S$ and let $\chi_v\in\Hom(k_v^*, G)$. Then 
	$$f_{\Lambda_v}(\chi_v) = 1 \quad \Leftrightarrow \quad
	\mathcal{A}_v \subset \Ker \chi_v.$$
\end{lemma}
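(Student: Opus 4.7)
The plan is to unwind the definitions using local class field theory: this lemma is essentially a translation of the definition of $\Lambda_v$ (for $v\notin S$) under the local Artin map.

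First, I would recall the dictionary. Under the identification $\Hom(\Gal(\bar k_v/k_v),G) = \Hom(k_v^*,G)$ fixed just before the lemma, a homomorphism $\chi_v : k_v^* \to G$ corresponds to a sub-$G$-extension of $k_v$ whose underlying local field $L/k_v$ is the fixed field of $\ker(\chi_v)$ inside $k_v^{\mathrm{ab}}$, and for which $\Gal(L/k_v) \hookrightarrow G$ is induced by $\chi_v$ via the Artin isomorphism $k_v^*/N_{L/k_v}(L^*) \cong \Gal(L/k_v)$.

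Second, the key input from local class field theory is that the image of the norm map $N_{L/k_v}\colon L^*\to k_v^*$ equals $\ker(\chi_v)$. Therefore the condition that every element of $\mathcal{A}$ is a local norm from $L/k_v$, namely $\mathcal{A}_v \subset N_{L/k_v}(L^*)$, is equivalent to $\mathcal{A}_v \subset \ker(\chi_v)$. By the definition of $\Lambda_v$ for $v\notin S$, the former is exactly the condition $f_{\Lambda_v}(\chi_v)=1$, which finishes the proof.

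There is no real obstacle here: the main work is purely notational, namely keeping track of which side of the local Artin isomorphism the objects live on. The only minor subtlety is that $\chi_v$ need not be surjective, but this causes no issue since local class field theory applies equally well to the (possibly non-surjective) homomorphism $\chi_v$ by replacing $G$ with its image.
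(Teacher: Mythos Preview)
Your proof is correct and follows essentially the same approach as the paper: both invoke local class field theory to identify $\Ker\chi_v$ with the norm group $N_{K_{\varphi_v}/k_v}(K_{\varphi_v}^*)$, and then read off the equivalence directly from the definition of $\Lambda_v$ for $v\notin S$.
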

\begin{proof}
	Let $\varphi_v$ be the sub-$G$-extension of $k_v$ associated to
	$\chi_v$.
	By local class field theory  we have
	$$\Ker \chi_v = \Norm_{K_{\varphi_v}/k_v} K_{\varphi_v}^*,$$
	where $K_{\varphi_v}$ is the extension field of $k_v$ associated to
	$\varphi_v$. However, as $v \notin S$, by assumption in 
	Theorem \ref{thm:main} we have 
	$f_{\Lambda_v}(\chi_v) = 1$ if and only if every element of $\mathcal{A}$
	is a local norm	from $K_{\varphi_v}$; the result follows.
\end{proof}

\subsection{Harmonic analysis}

To deal with the sums  $$\sum_{\chi \in \Hom(\Adele^*/k^*,H)}\frac{f_\Lambda(\chi)}{\Phi(\chi)^s}$$
we shall use a version of the Poisson summation formula from harmonic analysis. The theory relevant to us was worked out in detail in \cite[\S 3]{HNP} when counting by \emph{discriminant}. The same theory transfers almost verbatim to show the validity of the Poisson summation formula for counting by conductor.

However, for the purposes of Theorem \ref{thm:main}, our case is special enough that we merely require a simplified version of the Poisson summation formula that can be proved using only character orthogonality for finite abelian groups. We may therefore forego some of the general theory from \cite[\S 3]{HNP}  and proceed in a more explicit manner. We first recall the set-up for the harmonic analysis.

\subsubsection{Fourier transforms} \label{sec:Fourier_def}

The group $\Hom(\Adele^*/k^*,H)$ is locally compact. Its Pontryagin dual is naturally identified with $\Adele^*/k^* \otimes H^\wedge$ (see \cite[\S3.1]{HNP}). We denote the associated pairing by $\langle \cdot, \cdot \rangle : \Hom(\Adele^*/k^*,H) \times (\Adele^*/k^* \otimes H^\wedge) \to S^1.$ Similarly, the Pontryagin dual of $\Hom(k_v^*,H)$ is naturally identified with $k_v^* \otimes \dual{H}$, and we also denote the relevant Pontryagin pairing by  $\langle \cdot, \cdot \rangle$. For each place $v$, we equip the finite group $\Hom(k^*_v,H)$ with the unique Haar measure $\mathrm{d}\chi_v$ such that 
$$\vol(\Hom(k_v^*/\OO_v^*,H)) = 1.$$
If $v$ is non-archimedean, this is $|H|^{-1}$ times the counting measure; for archimedean
$v$, recalling our convention that $\OO_v = k_v$, we obtain the counting measure.
The product of these measures yields a well-defined measure $\mathrm{d}\chi$ on $\Hom(\Adele^*,H)$. We say that an element of $\Hom(k^*_v,H)$ is \emph{unramified} if it lies in the subgroup $\Hom(k_v^*/\OO_v^*,H)$, i.e. if it is trivial on $\OO_v^*$, and that it is \emph{tamely ramified} if it is ramified and trivial on $1+\pi_v\OO_v$.

The function $f_\Lambda/\Phi^s$ is a product of local functions $f_{\Lambda_v}/\Phi_v^s$ on $\Hom(k_v^*,H)$, where $\Phi_v(\chi_v)$ is the reciprocal of the $v$-adic norm of the conductor of $\Ker\chi_v$. For $v\notin S$, these local functions take only the value $1$ on the unramified elements by our choice of $S$ and Lemma \ref{lem:kernel}, and thus $f_\Lambda/\Phi^s$ extends to a well-defined and continuous function on $\Hom(\Adele^*,H)$. We define its Fourier transform to be
$$\widehat{f}_{\Lambda,H}(x;s) = \int_{\chi \in \Hom(\Adele^*,H)}
\frac{f_\Lambda(\chi) \langle \chi, x \rangle}{\Phi(\chi)^s} \mathrm{d}\chi,$$
where $x = (x_v)_v \in  \Adele^* \otimes H^\wedge$.  Similarly, for $x_v \in k_v^{*} \otimes \dual{H}$ we have the local Fourier transform
$$\widehat{f}_{\Lambda_v,H}(x_v;s) = \int_{\chi_v \in \Hom(k_v^*,H)}
\frac{f_{\Lambda_v}(\chi_v) \langle \chi_v, x_v \rangle}{\Phi_v(\chi_v)^s} \mathrm{d}\chi_v.$$
For $\re s \gg 1$, the global Fourier transform exists and defines a holomorphic function in this domain, and there is an Euler product decomposition
\begin{equation} \label{eqn:Euler_product}
	\widehat{f}_{\Lambda,H}(x;s) = \prod_v \widehat{f}_{\Lambda_v,H}(x_v;s).
\end{equation}

\subsubsection{The local Fourier transforms}
Let $v\in\Val$ and $x_v\in k_v^*\otimes \dual{H}$.

\begin{lemma} \label{lem:holomorphic}
	The local Fourier transform 
	$\widehat{f}_{\Lambda_v,H}(x_v;s)$ is holomorphic on $\CC$ and satisfies $\widehat{f}_{\Lambda_v,H}(x_v;s)\ll_{k,H} 1$ on $\re s\geq 0$. 
	Moreover $\widehat{f}_{\Lambda_v,H}(1;s)>0$ for $s \in \RR$.
\end{lemma}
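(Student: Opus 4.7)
The plan is to exploit the fact that, despite being written as an integral, $\widehat{f}_{\Lambda_v,H}(x_v;s)$ is in reality a \emph{finite} sum of entire functions of extremely simple form, from which all three assertions fall out at once. The key input is the finiteness of the group $\Hom(k_v^*, H)$: since $H$ has exponent $e$ dividing $|H|$, every continuous $\chi_v \colon k_v^* \to H$ is trivial on $(k_v^*)^e$, and $k_v^*/(k_v^*)^e$ is a finite group --- immediately for $k_v = \RR$ or $\CC$, and for non-archimedean $k_v$ by the standard decomposition $k_v^* \cong \ZZ \times \mu_{q_v-1} \times \ZZ_p^{[k_v:\QQ_p]}$ with $p$ the residue characteristic.

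Granted this, I would rewrite the Fourier transform as the finite sum
\[
\widehat{f}_{\Lambda_v,H}(x_v;s) \;=\; \sum_{\chi_v \in \Hom(k_v^*,H)} c_{\chi_v}\,\frac{f_{\Lambda_v}(\chi_v)\,\langle \chi_v, x_v\rangle}{\Phi_v(\chi_v)^s},
\]
where $c_{\chi_v}>0$ is the $d\chi_v$-mass of the point $\chi_v$, and where each conductor $\Phi_v(\chi_v)$ equals either $1$ (for $v$ archimedean, or for $\chi_v$ unramified at a finite place) or $q_v^{n_{\chi_v}}$ with $n_{\chi_v} \in \NN$ otherwise. Each summand is entire in $s$, which gives holomorphicity on $\CC$. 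On $\re s \geq 0$ we have $|\Phi_v(\chi_v)^{-s}| \leq 1$ and $|\langle \chi_v, x_v\rangle| = 1$, so the whole sum is bounded by a constant depending only on $k$ and $H$, yielding the second claim.

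For positivity at $x_v = 1$, observe that $\langle \chi_v, 1\rangle = 1$ for every $\chi_v$, so for real $s$ every summand is a non-negative real number. It therefore suffices to produce at least one $\chi_v \in \Lambda_v$: for $v \in S$ this is the standing hypothesis $\Lambda_v \neq \emptyset$, and for $v \notin S$ the trivial character $0 \in \Hom(k_v^*, H)$ lies in $\Lambda_v$ by Lemma~\ref{lem:kernel}, since its kernel is all of $k_v^*$ and in particular contains $\mathcal{A}_v$. The only step that is not entirely formal is the finiteness of $\Hom(k_v^*, H)$, and even this is classical, so I do not anticipate any serious obstacle; all the analytic content of the section is deferred to the global assembly of these local factors.
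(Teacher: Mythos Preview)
Your proposal is correct and follows essentially the same line as the paper: both arguments expand $\widehat{f}_{\Lambda_v,H}(x_v;s)$ as a finite sum over $\Hom(k_v^*,H)$ (the paper writes the constant $c_{\chi_v}$ explicitly as $1/|H|$ in the non-archimedean case), read off holomorphicity and the bound on $\re s\geq 0$ termwise, and for positivity at $x_v=1$ invoke the non-emptiness of $\Lambda_v$ (assumed for $v\in S$, and via Lemma~\ref{lem:kernel} for $v\notin S$). Your justification of the finiteness of $\Hom(k_v^*,H)$ is a welcome addition that the paper leaves implicit; you might also remark that $|\Hom(k_v^*,H)|\ll_{k,H}1$ uniformly in $v$ to fully nail down the $\ll_{k,H}1$ claim, though the paper does not spell this out either.
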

\begin{proof}
	We prove the result when $v$ is non-archimedean, the case of archimedean
	$v$ being analogous. By our choice of measures, we have
	\begin{align}\label{eq:localfourier}
		\widehat{f}_{\Lambda_v,H}(x_v;s) &= \frac{1}{|H|}
		\sum_{\chi_v \in \Hom(k_v^*,H)}
                                                 \frac{f_{\Lambda_v}(\chi_v) \langle \chi_v, x_v \rangle}{\Phi_v(\chi_v)^s}.
        \end{align}
This finite sum clearly defines a holomorphic function on $\CC$. If $\re s\geq 0$ then the sum is $\ll_{k,H} 1$, since every summand is bounded absolutely and the number of summands is $\ll_{k,H} 1$. For the last part, we have
    \begin{align*}
        \widehat{f}_{\Lambda_v,H}(1;s) &= \frac{1}{|H|}
		\sum_{\substack{\chi_v \in \Lambda_v}}
		\frac{1}{\Phi_v(\chi_v)^s}.
	\end{align*}
	For $v \in S$ the set $\Lambda_v$ is non-empty by assumption. For
	$v \notin S$ the set $\Lambda_v$ is again non-empty, as it 
	always contains the trivial homomorphism $k_v^* \to H$ by
	Lemma \ref{lem:kernel}. 
	For $s \in \RR$, we therefore obtain a
	finite non-empty sum of positive real numbers, which is positive.
\end{proof}

Now let $v$ be non-archimedean. Choosing a uniformiser of $k_v$ identifies $k_v^*/\cO_v^*$ with $\bbZ$ and gives a splitting of the exact sequence
\begin{equation} \label{seq:uniformiser}
	1\to \cO_v^*\to k_v^*\to k_v^*/\cO_v^*\to 1.
\end{equation}
This implies that the sequence 
\[1\to \Hom(k_v^*/\cO_v^*, H)\to \Hom(k_v^*, H)\to \Hom(\cO_v^*, H)\to 1\] 
is split exact. Thus
\begin{eqnarray}\label{eq:EulerFactor2}
\widehat{f}_{\Lambda_v,H}(x_v;s)=\frac{1}{|H|}\sum_{\psi_v\in \Hom(k_v^*/\cO_v^*, H)}\sum_{\chi_v\in \Hom(\cO_v^*, H)}\frac{f_{\Lambda_v}(\psi_v\chi_v)\langle \psi_v\chi_v, x_v \rangle}{\Phi_v(\chi_v)^s},
\end{eqnarray}
since $\psi_v$ is unramified and hence $\Phi(\psi_v\chi_v)=\Phi_v(\chi_v)$.

\begin{lemma}\label{lem:invariance}
Let $v \notin S$. Then $f_{\Lambda_v}$ is $\Hom(k_v^*/\cO_v^*, H)$-invariant and, in particular, $f_{\Lambda_v}(\psi_v) = 1$ for all $\psi_v \in \Hom(k_v^*/\cO_v^*, H)$.
\end{lemma}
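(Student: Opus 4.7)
The plan is to reduce the invariance claim to the kernel characterisation of $f_{\Lambda_v}$ (the analogue of Lemma~\ref{lem:kernel} for subgroups $H\subset G$, which holds by the same proof) and then exploit the fact that $\mathcal{A}$ consists of $S$-units.

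First I would record that, since $v\notin S$ and $\mathcal{A}\subset\OO_S^*$ by our choice of $S$ in \S\ref{sec:S}, we have $\mathcal{A}_v\subset\OO_v^*$. Next, for $\chi_v\in\Hom(k_v^*,H)$ and $\psi_v\in\Hom(k_v^*/\OO_v^*,H)$, the homomorphism $\psi_v$, regarded as an element of $\Hom(k_v^*,H)$ via the inflation along $k_v^*\twoheadrightarrow k_v^*/\OO_v^*$, is trivial on $\OO_v^*$ and hence on $\mathcal{A}_v$. Consequently $(\psi_v\chi_v)(\alpha)=\chi_v(\alpha)$ for every $\alpha\in\mathcal{A}_v$, and therefore
\[
  \mathcal{A}_v\subset\Ker(\psi_v\chi_v)\iff\mathcal{A}_v\subset\Ker(\chi_v).
\]
By the (subgroup version of the) kernel characterisation in Lemma~\ref{lem:kernel}, the two sides are precisely $f_{\Lambda_v}(\psi_v\chi_v)=1$ and $f_{\Lambda_v}(\chi_v)=1$ respectively, proving the invariance.

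For the ``in particular'' part, I would take $\chi_v$ to be the trivial homomorphism $k_v^*\to H$. Its kernel is all of $k_v^*$, so certainly $\mathcal{A}_v\subset\Ker(\chi_v)$, i.e.\ $f_{\Lambda_v}(\chi_v)=1$. Invariance then gives $f_{\Lambda_v}(\psi_v)=f_{\Lambda_v}(\psi_v\chi_v)=1$.

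The only mild obstacle is bookkeeping: one must verify that the kernel characterisation of Lemma~\ref{lem:kernel}, originally phrased for $\chi_v\in\Hom(k_v^*,G)$, carries over to $\chi_v\in\Hom(k_v^*,H)$ for $H\subset G$. This is immediate, because a sub-$H$-extension of $k_v$ is a fortiori a sub-$G$-extension, and the condition in the definition of $\Lambda_v$ (for $v\notin S$) depends only on the field $K_{\varphi_v}$, so the same argument via local class field theory applies verbatim.
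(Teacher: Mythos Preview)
Your proof is correct and follows essentially the same route as the paper's: both arguments observe that $\mathcal{A}_v\subset\OO_v^*\subset\Ker\psi_v$ (since $\mathcal{A}\subset\OO_S^*$ and $v\notin S$), deduce that the kernel condition $\mathcal{A}_v\subset\Ker(\cdot)$ is unchanged under translation by $\psi_v$, invoke Lemma~\ref{lem:kernel}, and specialise to $\chi_v=\one$ for the second assertion. Your explicit remark that Lemma~\ref{lem:kernel} applies verbatim with $H$ in place of $G$ is a point the paper leaves implicit.
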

 \begin{proof}	
	Let $\chi_v\in\Hom(k_v^*, H)$ and let $\psi_v\in \Hom(k_v^*/\cO_v^*, H)$. We use the criterion from Lemma \ref{lem:kernel}. We have $\mathcal{A}_v \subset \OO_v^*\subset\Ker \psi_v$. Therefore $\mathcal{A}_v \subset \Ker\psi_v\chi_v$ if and only if $\mathcal{A}_v \subset \Ker \chi_v$, whence $f_{\Lambda_v}(\psi_v\chi_v)=f_{\Lambda_v}(\chi_v)$, as required. With $\chi_v=\one$, this also shows the second assertion.
      \end{proof}
      
In the statement of the following lemma, note that the natural map $\OO_v^* \otimes \dual{H} \to k_v^* \otimes \dual{H}$ is injective, as the sequence \eqref{seq:uniformiser} is split exact. Therefore, we may naturally view $\OO_v^* \otimes \dual{H}$ as a subgroup of $k_v^* \otimes \dual{H}$.

\begin{lemma} \label{lem:O_v_sum}
	Let $v \notin S$. Then
	$$\widehat{f}_{\Lambda_v,H}(x_v;s) = 
	\begin{cases}
	\displaystyle{\sum_{\chi_v \in \Hom(\OO_v^*,H)}
	\frac{f_{\Lambda_v}(\chi_v) \langle \chi_v, x_v \rangle}{\Phi_v(\chi_v)^s}}, &  \text{if }x_v \in \OO_v^* \otimes \dual{H}, \\
	0, & \text{otherwise}.
	\end{cases}$$
\end{lemma}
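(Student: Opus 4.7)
The plan is to start from the expression \eqref{eq:EulerFactor2} for $\widehat{f}_{\Lambda_v,H}(x_v;s)$ and then factor out the sum over the unramified characters $\psi_v \in \Hom(k_v^*/\cO_v^*, H)$. First, I would use Lemma \ref{lem:invariance} to replace $f_{\Lambda_v}(\psi_v\chi_v)$ by $f_{\Lambda_v}(\chi_v)$, since $v \notin S$. Next, using the bilinearity of the Pontryagin pairing, I would rewrite $\langle \psi_v\chi_v, x_v \rangle = \langle \psi_v, x_v \rangle \langle \chi_v, x_v \rangle$ (the latter makes sense since the pairing $\Hom(k_v^*,H) \times (k_v^* \otimes \dual H) \to S^1$ restricts to a pairing on either factor). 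This yields
\begin{equation*}
  \widehat{f}_{\Lambda_v,H}(x_v;s) = \frac{1}{|H|} \sum_{\chi_v \in \Hom(\cO_v^*,H)} \frac{f_{\Lambda_v}(\chi_v) \langle \chi_v, x_v \rangle}{\Phi_v(\chi_v)^s} \sum_{\psi_v \in \Hom(k_v^*/\cO_v^*,H)} \langle \psi_v, x_v \rangle.
\end{equation*}

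The heart of the proof is then a character-orthogonality computation on the finite abelian group $\Hom(k_v^*/\cO_v^*,H)$, which is (non-canonically) isomorphic to $H$ via the choice of a uniformiser. The inner character sum equals $|H|$ if $x_v$ lies in the annihilator of $\Hom(k_v^*/\cO_v^*,H)$ inside the Pontryagin dual $k_v^* \otimes \dual H$, and vanishes otherwise. I would identify this annihilator with $\cO_v^* \otimes \dual H$, viewed as a subgroup of $k_v^* \otimes \dual H$ via the splitting of the sequence \eqref{seq:uniformiser}. Concretely, this is the Pontryagin dual of the short exact sequence obtained by applying $\Hom(-,H)$ to \eqref{seq:uniformiser}: the subgroup $\Hom(k_v^*/\cO_v^*,H)$ of $\Hom(k_v^*,H)$ dually corresponds to the quotient $(k_v^*/\cO_v^*) \otimes \dual H$ of $k_v^* \otimes \dual H$, whose kernel is exactly $\cO_v^* \otimes \dual H$.

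Combining these two steps, in the case $x_v \in \cO_v^* \otimes \dual H$ the factor $|H|$ cancels the $1/|H|$ prefactor, leaving the claimed formula; in the remaining case the inner sum vanishes, giving zero. I expect the only mildly subtle step to be the annihilator identification, which requires checking compatibility between the splitting of \eqref{seq:uniformiser} and the induced splittings after applying $\Hom(-,H)$ and $-\otimes \dual H$; however this is immediate from functoriality of Pontryagin duality on split exact sequences of locally compact abelian groups.
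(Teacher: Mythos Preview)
Your proposal is correct and follows essentially the same route as the paper: start from \eqref{eq:EulerFactor2}, use Lemma~\ref{lem:invariance} to pull $f_{\Lambda_v}$ out of the $\psi_v$-sum, split the pairing bilinearly, and apply character orthogonality on $\Hom(k_v^*/\cO_v^*,H)$, identifying the annihilator with $\cO_v^*\otimes\dual{H}$ via the split sequence \eqref{seq:uniformiser}. The paper's only additional remark is the explicit observation that $|\Hom(k_v^*/\cO_v^*,H)|=|H|$ because $k_v^*/\cO_v^*\cong\ZZ$, which you implicitly use when cancelling the $1/|H|$.
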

\begin{proof}
	From \eqref{eq:EulerFactor2} and Lemma \ref{lem:invariance}
	we have
	\begin{align*}
          \widehat{f}_{\Lambda_v,H}(x_v;s)
	& = \frac{1}{|H|}\sum_{\chi_v\in \Hom(\cO_v^*, H)} \frac{f_{\Lambda_v}(\chi_v)\langle \chi_v, x_v \rangle}{\Phi_v(\chi_v)^s}  \sum_{\psi_v\in \Hom(k_v^*/\cO_v^*, H)} \langle \psi_v, x_v \rangle.
\end{align*}
	Now character orthogonality gives
	$$\sum_{\psi_v\in \Hom(k_v^*/\cO_v^*, H)} \langle \psi_v, x_v \rangle
	= \begin{cases}
		|\Hom(k_v^*/\cO_v^*, H)|, & \text{ if } x_v \in \OO_v^* \otimes \dual{H},\\
		0,& \text{otherwise}.
	\end{cases}$$
	Indeed, the subgroup $\OO_v^* \otimes \dual{H} \subset k_v^* \otimes \dual{H}$
	is naturally identified with the Pontryagin dual of $\Hom(k_v^*/\cO_v^*, H)$.
	The result now follows on noting that $k_v^*/\cO_v^* \cong \ZZ$
	and hence $|\Hom(k_v^*/\cO_v^*, H)| = |H|$.
\end{proof}

\subsubsection{Poisson summation}
We now prove the version of Poisson summation that we will require. 
In the statement, we view $\OO_S^*\otimes \dual{H}$ as a subgroup of $k^*\otimes \dual{H}$ as follows: we have the exact sequence
\begin{equation} \label{seq:P(O_S)}
	0 \to \OO_S^* \to k^* \to P(\OO_S) \to 0
\end{equation}
where $P(\OO_S)$ denotes the group of non-zero principal fractional ideals of $\OO_S$. Since $P(\OO_S)$ is a free abelian group, we have $\Tor(P(\OO_S),\dual{H}) = 0$. Therefore applying $(\cdot) \otimes \dual{H}$ to  \eqref{seq:P(O_S)} we find that the map $\OO_S^*\otimes \dual{H} \to k^*\otimes \dual{H}$ is injective, as required.

\begin{proposition} \label{prop:Poisson}
	For $\re s > 1$ the Fourier	transform $\widehat{f}_{\Lambda,H}(\cdot;s)$
	exists and defines a holomorphic function on this domain.
	Moreover, we have the Poisson formula
    \begin{equation}\label{eq:prop_poisson}
	 \sum_{\chi \in \Hom(\Adele^*/k^{*} , H)} \frac{f_\Lambda(\chi)}{\Phi(\chi)^{s}}
	=\frac{1}{|\OO_k^*\otimes\dual{H}|} 
	\sum_{x \in \OO_S^*\otimes \dual{H}}\widehat{f}_{\Lambda,H}(x;s), \quad \re s > 1.
   \end{equation}
\end{proposition}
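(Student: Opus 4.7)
The plan is to establish the two assertions of the proposition separately, building on the harmonic-analytic framework of~\cite[\S 3]{HNP}. For the holomorphy of $\widehat f_{\Lambda,H}(\,\cdot\,;s)$ on $\re s>1$, I work from the Euler product~\eqref{eqn:Euler_product}. Each local factor is entire in $s$ by Lemma~\ref{lem:holomorphic}, and those at $v\in S$ are bounded uniformly on $\re s\geq 0$. For $v\notin S$, Lemma~\ref{lem:O_v_sum} makes the Euler factor vanish unless $x_v\in\OO_v^*\otimes\dual{H}$, in which case the contribution of $\chi_v=\one$ equals $1$ (using Lemma~\ref{lem:kernel} together with the standing hypothesis $\mathcal{A}\subset\OO_S^*$), while every nontrivial character in $\Hom(\OO_v^*,H)$ has conductor of norm at least $q_v$. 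This yields $\widehat f_{\Lambda_v,H}(x_v;s)=1+O(q_v^{-\re s})$, and comparison with $\zeta_k(\re s)$ gives absolute convergence of the product on $\re s>1$.

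For the Poisson identity~\eqref{eq:prop_poisson}, I would first enlarge the summation range on the right from $\OO_S^*\otimes\dual{H}$ to $k^*\otimes\dual{H}$. Tensoring the exact sequence~\eqref{seq:P(O_S)} with $\dual{H}$ (valid because $P(\OO_S)$ is free) identifies $\OO_S^*\otimes\dual{H}$ with the kernel of the valuation map $k^*\otimes\dual{H}\to\bigoplus_{v\notin S}\dual{H}$, so any $x\in k^*\otimes\dual{H}$ outside $\OO_S^*\otimes\dual{H}$ has $x_v\notin\OO_v^*\otimes\dual{H}$ for some $v\notin S$; then $\widehat f_{\Lambda,H}(x;s)=0$ by Lemma~\ref{lem:O_v_sum}, so enlarging the sum does not change its value. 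Observe also that $\OO_S^*\otimes\dual{H}$ is itself finite, since $\OO_S^*$ is finitely generated by Dirichlet's $S$-unit theorem and $\dual{H}$ is finite; hence no new convergence issues appear on the right after the enlargement.

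In this enlarged form, the identity is exactly a Poisson summation formula for the closed subgroup $\Hom(\Adele^*/k^*,H)\subset\Hom(\Adele^*,H)$, whose annihilator in the Pontryagin dual $\Adele^*\otimes\dual{H}$ is the image of $k^*\otimes\dual{H}$; the constant $1/|\OO_k^*\otimes\dual{H}|$ then arises as the covolume of a fundamental domain for this annihilator under the normalisations fixed in~\S\ref{sec:Fourier_def}. I would prove this by the method of~\cite[\S 3]{HNP}, where the analogous identity is derived for counting by discriminant: truncate characters to conductor dividing a fixed modulus so that both sides reduce to classical Poisson summation on the finite abelian group obtained from a ray class group, and then pass to the limit using the Euler-product estimates above to justify interchange of limits and sums. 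I expect the main obstacle to be tracking the measure-theoretic constants, in particular verifying that the prefactor comes out as $1/|\OO_k^*\otimes\dual{H}|$ rather than some multiple involving $|\OO_S^*\otimes\dual{H}|$; this requires carefully unwinding how the local measures on $\Hom(k_v^*,H)$ interact with the $S$-unit and ideal-class quotients in the finite ray-class approximations.
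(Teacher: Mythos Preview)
Your treatment of the first assertion is fine and essentially what the paper needs: the local factors at $v\notin S$ are $1+O(q_v^{-\re s})$ by Lemma~\ref{lem:O_v_sum} and the standing hypotheses on~$S$, so the Euler product converges absolutely on $\re s>1$.

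For the Poisson identity, your route via abstract Poisson summation on $\Hom(\Adele^*,H)$ with dual lattice $k^*\otimes\dual{H}$ is viable, but the paper takes a genuinely different and more elementary path that sidesteps precisely the obstacle you flag. Rather than invoking the general machinery of~\cite[\S3]{HNP}, the paper exploits the assumption (from~\S\ref{sec:S}) that $\OO_S$ has trivial class group, which gives an isomorphism $\Adele_S^*/\OO_S^* \cong \Adele^*/k^*$ where $\Adele_S^* = \prod_{v\in S}k_v^* \times \prod_{v\notin S}\OO_v^*$. Using Lemma~\ref{lem:O_v_sum}, one rewrites $\widehat f_{\Lambda,H}(x;s)$ for $x\in\OO_S^*\otimes\dual{H}$ directly as a sum over $\chi\in\Hom(\Adele_S^*,H)$; swapping the (finite) sum over $x$ with this sum over~$\chi$ and applying character orthogonality on the \emph{finite} group $\OO_S^*\otimes\dual{H}$ picks out exactly those $\chi$ trivial on~$\OO_S^*$, i.e.\ $\chi\in\Hom(\Adele_S^*/\OO_S^*,H)=\Hom(\Adele^*/k^*,H)$. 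The constant then drops out with no measure-theoretic subtlety: the factor $|\OO_S^*\otimes\dual{H}|/|H|^{|S_f|}$ collapses to $|\OO_k^*\otimes\dual{H}|$ by Dirichlet's $S$-unit theorem in the form $\OO_S^*\cong\OO_k^*\times\ZZ^{|S_f|}$.

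What your approach buys is generality---it would work without the class-group assumption and fits the template of~\cite{HNP}---but at the cost of the covolume computation you correctly identify as delicate. The paper's approach buys simplicity: everything reduces to orthogonality on finite abelian groups, no limits or truncations are needed, and the constant is immediate. Since the hypotheses of~\S\ref{sec:S} are already in force, the elementary route is the cleaner choice here.
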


Note that the group $\OO_S^*\otimes \dual{H}$ is finite by Dirichlet's $S$-unit theorem; in particular the right-hand sum is finite.
\begin{proof}
	Let $x \in \OO_S^*\otimes \dual{H}$. Let $x_v$ denote its image
	in $k_v^* \otimes \dual{H}$. Recall that we have normalised
	our Haar measures on $\Hom(k_v^*,H)$ to be $|H|^{-1}$ times the counting
	measure for non-archimedean $v$, and equal to the counting measure for archimedean
	$v$. We let $S_{\mathrm{f}}$ be the set of non-archimedean
	places in $S$. Now Lemma~\ref{lem:O_v_sum}  and \eqref{eqn:Euler_product} give
	\begin{align*}
	\widehat{f}_{\Lambda,H}(x;s) & =
	\frac{1}{|H|^{|S_{\mathrm{f}}|}}\prod_{v \in S} \sum_{\chi_v \in \Hom(k_v^*,H)}
	\hspace{-10pt}
\frac{f_{\Lambda_v}(\chi_v) \langle \chi_v, x_v \rangle}{\Phi_v(\chi_v)^s}
	\prod_{v \notin S} \sum_{\chi_v \in \Hom(\OO_v^*,H)}
	\hspace{-10pt}
\frac{f_{\Lambda_v}(\chi_v) \langle \chi_v, x_v \rangle}{\Phi_v(\chi_v)^s} \\
	& = \frac{1}{|H|^{|S_{\mathrm{f}}|}} \sum_{\chi \in \Hom(\Adele^*_S, H)}
	\frac{f_{\Lambda}(\chi) \langle \chi, x \rangle}{\Phi(\chi)^s} 
	\end{align*}
	where $\Adele^*_S = \prod_{v \in S}k_v^* \times \prod_{v \notin S} \OO_v^*$.
	We now change the order of summation in the right-hand sum
	of \eqref{eq:prop_poisson} to obtain
	\begin{align*}
		\sum_{x \in \OO_S^*\otimes \dual{H}}\widehat{f}_{\Lambda,H}(x;s)
		& = \frac{1}{|H|^{|S_{\mathrm{f}}|}} \sum_{\chi \in \Hom(\Adele^*_S, H)}
		\frac{f_{\Lambda}(\chi)}{\Phi(\chi)^s} 
		\sum_{x \in \OO_S^*\otimes \dual{H}}
		\langle \chi, x \rangle.
	\end{align*}
As $\Adele^*_S$ and $\Adele^*_S/\OO_S^*$ are locally compact groups and their subgroups of $n$th powers are closed, an application of \cite[Lem.~3.2]{HNP} gives canonical isomorphisms of abelian groups $ \Hom(\Adele^*_S, H)\cong \dual{(\Adele^*_S\otimes \dual{H})}$ and $ \Hom(\Adele^*_S/\OO_S^*, H)\cong \dual{(\Adele^*_S/\OO_S^*\otimes \dual{H})}$. Therefore, we can view an element $\chi\in \Hom(\Adele^*_S, H)$ as a character of $\Adele^*_S\otimes \dual{H}$. It is easily seen that $\chi$ induces the trivial character on $\OO_S^*\otimes \dual{H}$ if and only if $\chi\in  \Hom(\Adele^*_S/\OO_S^*, H)$. 
	Thus, we may apply character orthogonality to find
	that
	$$\sum_{x \in \OO_S^*\otimes \dual{H}} \langle \chi, x \rangle
	= \begin{cases}
		|\OO_S^*\otimes \dual{H}|, & \text{ if }\chi \in  \Hom(\Adele^*_S/\OO_S^*, H),\\
		0,& \text{otherwise}.
	\end{cases}$$
	We therefore obtain
	$$\sum_{x \in \OO_S^*\otimes \dual{H}}\widehat{f}_{\Lambda,H}(x;s)
	= \frac{|\OO_S^*\otimes \dual{H}|}{|H|^{|S_{\mathrm{f}}|}}
	\sum_{\chi \in \Hom(\Adele^*_S/\OO_S^*, H)}
	\frac{f_{\Lambda}(\chi)}{\Phi(\chi)^s}.$$
	Dirichlet's $S$-unit theorem gives a (non-canonical) isomorphism $\OO_S^* \cong \OO_k^* \times \ZZ^{S_{\mathrm{f}}}$, whereby
	$$\frac{|\OO_S^*\otimes \dual{H}|}{|H|^{|S_{\mathrm{f}}|}} = |\OO_k^* \otimes H|.$$	
	Moreover, as $\OO_S$ has trivial class group, 
	the natural map $\Adele_S^*/\OO_S^*
	\to \Adele^*/k^*$ is an isomorphism \cite[Lem.~2.8]{Woo10}.
	The result now easily follows.	
\end{proof}

\subsection{Analytic continuation of the Fourier transforms} \label{sec:analytic_continuation}
We now use the Poisson formula to study the analytic behaviour of 
the Dirichlet series under consideration. To do so, we shall
calculate explicitly the local Fourier transforms for
$v \notin S$. Fix some subgroup $H$ of $G$. By a slight abuse of notation, for $x_v \in k_v^* \otimes \dual{H}$ we write $x_v\in \mathcal{A}_v \otimes\dual{H}$ to express that $x_v$ is in the image of the (not necessarily injective) map $\mathcal{A}_v \otimes \dual{H}\to k_v^*\otimes\dual{H}$. 

\begin{lemma} \label{lem:local_good}
	Let $v \notin S$ and let $x_v \in \OO_v^* \otimes \dual{H}$. 
	Then
	\begin{equation}\label{eq:goodlocalfactors}
	\widehat{f}_{\Lambda_v,H}(x_v;s)=
	\begin{cases}
	1+(|\Hom(\FF_v^*/(\mathcal{A} \bmod v), H)|-1)q_v^{-s}, & 
	\textrm{ if } x_v \in \mathcal{A}_v \otimes \dual{H},\\
	1-q_v^{-s}, &  \textrm{ if } x_v\notin \mathcal{A}_v \otimes \dual{H}.
	\end{cases} \nonumber
      \end{equation}
\end{lemma}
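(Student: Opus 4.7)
The plan is to start from the explicit formula for $\widehat{f}_{\Lambda_v,H}(x_v;s)$ in Lemma~\ref{lem:O_v_sum}, which reduces the Fourier transform to a finite sum indexed by $\Hom(\OO_v^*,H)$. Since $v\notin S$, the residue characteristic $p$ is coprime to $|G|$ and hence to $|H|$. Writing $\OO_v^*\cong \FF_v^*\times U_v^{(1)}$ with $U_v^{(1)}$ pro-$p$, any continuous homomorphism $\chi_v\colon \OO_v^*\to H$ is trivial on $U_v^{(1)}$, so the natural map $\Hom(\FF_v^*,H)\to\Hom(\OO_v^*,H)$ is a bijection. Under this identification, the conductor-norm $\Phi_v(\chi_v)$ equals $1$ if $\chi_v$ is trivial and $q_v$ otherwise (since such a character extended by the uniformiser split has conductor exponent~$1$).

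Next, I would use Lemma~\ref{lem:kernel} to rewrite the support condition: $f_{\Lambda_v}(\chi_v)=1$ exactly when $\mathcal{A}_v\subset\Ker\chi_v$, and because $\mathcal{A}\subset\OO_S^*$ reduces to $(\mathcal{A}\bmod v)\subset \FF_v^*$, this amounts to saying that $\chi_v$ factors through $\FF_v^*/(\mathcal{A}\bmod v)$. Splitting off the trivial character (which contributes $1$) and rewriting, one obtains
\begin{equation*}
  \widehat{f}_{\Lambda_v,H}(x_v;s)=1-q_v^{-s}+q_v^{-s}\!\!\sum_{\chi_v\in\Hom(\FF_v^*/(\mathcal{A}\bmod v),\,H)}\!\!\langle\chi_v,x_v\rangle.
\end{equation*}

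The remaining sum is then evaluated by character orthogonality on the finite group $\Hom(\FF_v^*/(\mathcal{A}\bmod v),H)$: it equals $|\Hom(\FF_v^*/(\mathcal{A}\bmod v),H)|$ when $x_v$ induces the trivial character on this subgroup, and $0$ otherwise. The key identification, which is the main technical point, is the following: because $p\nmid|H|$, tensoring the split exact sequence $1\to U_v^{(1)}\to\OO_v^*\to\FF_v^*\to 1$ with the finite group $\dual{H}$ (of order prime to $p$) annihilates the $U_v^{(1)}$-factor, so the reduction map induces an isomorphism $\OO_v^*\otimes\dual{H}\xrightarrow{\sim}\FF_v^*\otimes\dual{H}$, which carries the image of $\mathcal{A}_v\otimes\dual{H}$ to the image of $(\mathcal{A}\bmod v)\otimes\dual{H}$. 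Under Pontryagin duality on the finite group $\FF_v^*$, the subgroup $\Hom(\FF_v^*/(\mathcal{A}\bmod v),H)\subset\Hom(\FF_v^*,H)$ is the annihilator of the image of $(\mathcal{A}\bmod v)\otimes\dual{H}$ in $\FF_v^*\otimes\dual{H}$.

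Combining these two descriptions shows that $x_v$ kills every $\chi_v\in\Hom(\FF_v^*/(\mathcal{A}\bmod v),H)$ precisely when $x_v\in\mathcal{A}_v\otimes\dual{H}$, and in that case the orthogonality sum equals $|\Hom(\FF_v^*/(\mathcal{A}\bmod v),H)|$, whereas otherwise it vanishes. Substituting back yields the two cases of the lemma. I expect the bookkeeping in the last paragraph, namely translating between characters of $\FF_v^*$ and characters of $\OO_v^*$ and matching the image of $\mathcal{A}_v\otimes\dual{H}$ with $(\mathcal{A}\bmod v)\otimes\dual{H}$ via the isomorphism coming from $p\nmid|H|$, to be the only subtle point; all the other manipulations are direct consequences of Lemmas~\ref{lem:kernel} and~\ref{lem:O_v_sum} together with standard character orthogonality.
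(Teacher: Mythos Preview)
Your proposal is correct and follows essentially the same route as the paper's proof: both start from Lemma~\ref{lem:O_v_sum}, use that $\Hom(\OO_v^*,H)=\Hom(\FF_v^*,H)$ because the principal units are pro-$p$ with $p\nmid|H|$, compute the conductor of the nontrivial (tame) characters as $q_v$, apply Lemma~\ref{lem:kernel} to reduce to characters factoring through $\FF_v^*/(\mathcal{A}\bmod v)$, and finish by character orthogonality. Your write-up is in fact slightly more explicit than the paper's at the final step, where you spell out the isomorphism $\OO_v^*\otimes\dual{H}\cong\FF_v^*\otimes\dual{H}$ and the biduality identifying the annihilator of $\Hom(\FF_v^*/(\mathcal{A}\bmod v),H)$ with the image of $(\mathcal{A}\bmod v)\otimes\dual{H}$; the paper simply states the outcome of the orthogonality computation.
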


\begin{proof}
	An element $\chi_v\in \Hom(\cO_v^*, H)$ is unramified if and only if it is trivial. Furthermore, since $v\notin S$ and our assumptions on $S$ in \S \ref{sec:S}, the ramification is tame and hence for non-trivial characters $\chi_v\in \Hom(\cO_v^*, H)$, we have $\Phi_v(\chi_v)= q_v$. Therefore, by Lemmas \ref{lem:kernel} and \ref{lem:O_v_sum} we have
	\begin{align}	
		\widehat{f}_{\Lambda_v,H}(x_v;s) & = 
		\sum_{\chi_v \in \Hom(\OO_v^*,H)}
		\frac{f_{\Lambda_v}(\chi_v) \langle \chi_v, x_v \rangle}{\Phi_v(\chi_v)^s} \nonumber\\
		& = 1+ \sum_{\substack{\chi_v\in \Hom(\cO_v^*, H)\\ \chi_v\neq \mathbbm{1}}}\frac{f_{\Lambda_v}(\chi_v)\langle \chi_v, x_v \rangle}{q_v^s}\nonumber \\
		& =1+ q_v^{-s}\sum_{\substack{\chi_v\in \Hom(\cO_v^*, H)\\ \chi_v\neq \mathbbm{1} \\ \mathcal{A}_v \subset \Ker \chi_v}}\langle \chi_v, x_v \rangle\nonumber\\
		&=1-q_v^{-s}+ q_v^{-s}\sum_{\substack{\chi_v\in \Hom(\cO_v^*, H) \\  \mathcal{A}_v \subset \Ker \chi_v}}\langle \chi_v, x_v \rangle. \label{eqn:ker_a}
	\end{align}
	We claim that  the natural map
	\begin{equation} \label{eqn:Hom_iso}
		\Hom(\FF_v^*/(\mathcal{A} \bmod v), H) \to \{\chi_v \in \Hom(\cO_v^*, H) : \mathcal{A}_v \subset \Ker \chi_v\}
	\end{equation}
	is an isomorphism.
	To see this, recall that Hensel's lemma
	yields a split short exact sequence
	$$1 \to 1+\mathfrak{p}_v \to \OO_v^* \to \FF_v^* \to 1,$$
	where $\fp_v$
	denotes the maximal ideal of $\OO_v$.
	Applying $\Hom(\cdot, H)$, we obtain
	$$1 \to \Hom(\FF_v^*,H) \to \Hom(\OO_v^*,H) \to \Hom(1+\mathfrak{p}_v,H) \to 1.$$
	The kernel of a continuous homomorphism $1+\mathfrak{p}_v\to H$ contains $1+\mathfrak{p}_v^n$ for some $n\in\mathbb{N}$, and the successive quotients in the filtration $1+\mathfrak{p}_v\supset 1+\mathfrak{p}_v^2\supset \dots \supset 1+\mathfrak{p}_v^n$ each have order $|\cO_v/\mathfrak{p}_v|=q_v$ (see \cite[Prop.~IV.2.6]{Ser79}).
	Consequently, the quotient $(1+\mathfrak{p}_v)/(1+\mathfrak{p}_v^n)$ has order a power of $q_v$. Now recall that we assumed in \S\ref{sec:S} that $\gcd(q_v,H) =1$. Therefore, any continuous homomorphism $1+\mathfrak{p}_v\to H$ is trivial. It follows that
	$\Hom(\FF_v^*,H) = \Hom(\OO_v^*,H)$.

	Moreover, $\mathcal{A} \bmod v$ lies in the kernel of a
	homomorphism $\FF_v^* \to H$ if and only if $\mathcal{A}_v$ lies in the kernel of the induced
	homomorphism $\OO_v^* \to H$; whence \eqref{eqn:Hom_iso} is an isomorphism
	as claimed.
	
	Orthogonality of characters now gives
	$$\sum_{\substack{\chi_v\in \Hom(\cO_v^*, H) \\ \mathcal{A} \subset \Ker \chi_v}}
	\langle \chi_v, x_v \rangle =
	\begin{cases}
		|\Hom(\FF_v^*/(\mathcal{A} \bmod v), H)|,& \text{if }
		x_v\in \mathcal{A}_v \otimes \dual{H}, \\
		0,&  \text{otherwise}.
              \end{cases}$$
   Inputting this into \eqref{eqn:ker_a}, the result follows.
\end{proof}
To study the analytic behaviour of the global Fourier transforms $\widehat{f}_{\Lambda,H}(x;s)$, we use the theory of frobenian functions from \S\ref{sec:frob}.

\begin{lemma}\label{lem:d_a_H}
  Let $e$ be the exponent of $H$. Consider a function $d_{\mathcal{A},H}:\Val \to \CC$ which for all $v\notin S$  satisfies
	$$
d_{\mathcal{A},H}(v) = \max\{ d\in\ZZ: d \textrm{ divides } \gcd(e,q_v-1) \textrm{ and } \mathcal{A} \bmod v \subset \FF_v^{*d}\}.
	$$
  Then
  \begin{enumerate}
  \item $d_{\mathcal{A},H}$ is $S$-frobenian, and
  \item for $v\notin S$, we have $|\Hom(\FF_v^*/(\mathcal{A} \bmod v),H)| = |H[d_{\mathcal{A},H}(v)]|$.
  \end{enumerate}
\end{lemma}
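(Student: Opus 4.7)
My plan is to interpret the condition $\mathcal{A}\bmod v\subset \FF_v^{*d}$ in terms of splitting in the Kummer tower $\{k_d\}_{d\mid e}$, and then to exploit the cyclicity of $\FF_v^*/\FF_v^{*e}$ for the homomorphism count.

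For (1), I take $L:=k_e=k(\mu_e,\sqrt[e]{\mathcal{A}})$. This is finite Galois over $k$ and, by our standing assumptions on $S$ (namely $\mathcal{A}\subset \OO_S^*$ and $S$ contains all places above primes $p\leq |G|$), unramified outside $S$. For each $d\mid e$, the subfield $k_d\subset L$ is Galois over $k$, so $\Gal(L/k_d)\subset\Gal(L/k)$ is normal and ``$\Frob_v$ fixes $k_d$'' is a well-defined conjugation-invariant condition on $v\notin S$. Using that $v$ is tame at every divisor of $e$ together with Hensel's lemma, one verifies for $d\mid e$ that
\[v\text{ splits completely in }k_d \iff d\mid q_v-1\text{ and }\mathcal{A}\bmod v\subset \FF_v^{*d}.\]
Hence $d_{\mathcal{A},H}(v)=\max\{d\mid e:\Frob_v|_{k_d}=\id\}$, which is visibly a class function on $\Gal(L/k)$; this establishes the $S$-frobenian property.

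For (2), fix $v\notin S$ and set $d:=d_{\mathcal{A},H}(v)$ and $m:=\gcd(e,q_v-1)$. Since $H$ has exponent $e$, every homomorphism $\FF_v^*\to H$ factors through the cyclic group $\FF_v^*/\FF_v^{*e}$ of order $m$. Let $\bar{\mathcal{A}}$ denote the image of $\mathcal{A}\bmod v$ in this quotient. Subgroups of a cyclic group are determined by their indices; moreover, for $d'\mid m$ the containment $\bar{\mathcal{A}}\subset (\FF_v^*/\FF_v^{*e})^{d'}$ is equivalent to $\mathcal{A}\bmod v\subset \FF_v^{*d'}$, so the maximality in the definition of $d$ forces $\bar{\mathcal{A}}=(\FF_v^*/\FF_v^{*e})^d$. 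Consequently $(\FF_v^*/\FF_v^{*e})/\bar{\mathcal{A}}$ is cyclic of order $d$, and
\[\Hom\bigl(\FF_v^*/\langle\mathcal{A}\bmod v\rangle,\,H\bigr)\cong \Hom(\ZZ/d\ZZ,H)\cong H[d],\]
proving (2).

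The only technical input worth isolating is the Kummer-theoretic splitting criterion in (1); everything else is formal manipulation with cyclic groups and character groups.
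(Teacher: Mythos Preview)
Your proof is correct and follows essentially the same approach as the paper's. For part~(1) you use the same Kummer tower $k_d=k(\mu_d,\sqrt[d]{\mathcal{A}})$ inside $k_e$ and the same Hensel/splitting criterion; the paper packages the resulting class function via an explicit partition $\Gal(k_e/k)=\bigsqcup_{d\mid e}\Sigma_d$, whereas you write it directly as $\sigma\mapsto\max\{d\mid e:\sigma|_{k_d}=\id\}$, but these are the same function. For part~(2) there is a cosmetic difference: the paper works inside $\FF_v^*$ itself, taking $m$ to be the largest divisor of $q_v-1$ with $\mathcal{A}\bmod v\subset\FF_v^{*m}$ and then observing $|H[m]|=|H[\gcd(m,e)]|=|H[d_{\mathcal{A},H}(v)]|$, while you first pass to the quotient $\FF_v^*/\FF_v^{*e}$ of order $\gcd(e,q_v-1)$ and read off the index $d$ directly there; both arguments exploit the same cyclic-group bookkeeping.
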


\begin{proof}
  \emph{(1)} For every $d\mid e$, consider the number field $k_d=k(\zeta_d,\sqrt[d]{\mathcal{A}})$. The subset
  \begin{equation*}
    \Sigma_d = \Gal(k_e/k_d)\smallsetminus\bigcup_{\substack{d'\mid \frac{e}{d}\\d'\neq 1}}\Gal(k_e/k_{dd'}) \subset \Gal(k_e/k)
  \end{equation*}
  is a union of conjugacy classes, since each $\Gal(k_e/k_{d})$ is normal in $\Gal(k_e/k)$. The sets $\Sigma_d$ for $d\mid e$ form a partition of $\Gal(k_e/k)$. Let $\varphi : \Gal(k_e/k)\to\CC$ be the class function that takes the constant value $d$ on $\Sigma_d$, for all $d\mid e$. We claim that $d_{\mathcal{A},H}(v)=\varphi(\Frob_v)$ for all $v\notin S$, so in particular it is $S$-frobenian.

  Note that $\Frob_v\in \Sigma_d$ if and only if $d$ is the largest divisor of $e$ such that $v$ splits completely in $k_d/k$. Equivalently, $d$ is the largest divisor of $e$ such that $d\mid q_v-1$ and $x^d-\alpha$ has a root in $k_v$ for all $\alpha \in \mathcal{A}$. By Hensel's lemma, this is equivalent to $d=d_{\mathcal{A},H}(v)$, and thus $\varphi(\Frob_v)=d_{\mathcal{A},H}(v)$, as desired.
  
  \emph{(2)} Let $m$ be the largest divisor of $q_v-1$ such that $\mathcal{A}\bmod v \subset \FF_v^{*m}$. Then $\mathcal{A}\bmod v=\FF_v^{*m}$, and thus $\FF_v^*/(\mathcal{A} \bmod v)\cong\ZZ/m\ZZ$. Hence
  \begin{align*}
|\Hom(\FF_v^*/(\mathcal{A} \bmod v),H)|&=|\Hom(\ZZ/m\ZZ,H)|=|\Hom(\ZZ ,H[m])|\\ &=|H[m]|=|H[\gcd(m,e)]|=|H[d_{\mathcal{A},H}(v)]|.    \qedhere
  \end{align*}
\end{proof}

\begin{lemma}\label{lem:frob_set}
	Let $x \in \OO_S^* \otimes \dual{H}$. Then the set 
	$$
	\{v \in \Val : x_v \in \mathcal{A}_v \otimes \dual{H}\}
	$$
	is $S$-frobenian. 	In the special case $\dual{H} = \ZZ/e\ZZ$, on identifying
	$k^* \otimes \dual{H}$ with $k^*/k^{*e}$, this set equals
	\begin{equation} \label{eqn:eth_power}
	\left\{ v \in \Val :  \text{the polynomial }
	\prod_{\alpha \in \mathcal{A}/\mathcal{A}^e}
	(t^e - x\alpha)
	\text{ has a root in }k_v\right\}.
	\end{equation}
\end{lemma}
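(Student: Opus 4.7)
The plan is to establish the explicit description first in the special case $\dual{H}=\ZZ/e\ZZ$, deduce the $S$-frobenian property in this case from Example \ref{ex:frob_set}, and then reduce the general statement to this special case by decomposing $\dual{H}$ into cyclic factors.

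For the special case, I identify $k^*\otimes \dual{H}$ with $k^*/k^{*e}$; under this identification, the image of $\mathcal{A}_v \otimes \dual{H}$ is the image of $\mathcal{A}_v$ in $k_v^*/k_v^{*e}$. Hence $x_v \in \mathcal{A}_v \otimes \dual{H}$ if and only if there is $\alpha \in \mathcal{A}$ with $x/\alpha \in k_v^{*e}$, and replacing $\alpha$ by $\alpha^{-1}$ (valid because $\mathcal{A}$ is a group) if and only if there is $\alpha \in \mathcal{A}$ with $x\alpha \in k_v^{*e}$, i.e.\ with $t^e - x\alpha$ having a root in $k_v$. Since this condition depends only on the class of $\alpha$ in $\mathcal{A}/\mathcal{A}^e$, I may restrict $\alpha$ to a set of coset representatives of $\mathcal{A}/\mathcal{A}^e$, which yields the displayed description \eqref{eqn:eth_power}.

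To see that this set is $S$-frobenian, apply Example \ref{ex:frob_set}: the root set of a polynomial is frobenian, with Galois data pulled back from its splitting field. The splitting field of $\prod_{\alpha \in \mathcal{A}/\mathcal{A}^e}(t^e - x\alpha)$ is contained in the Kummer extension $k(\mu_e, \sqrt[e]{x\mathcal{A}})$. By the setup of \S\ref{sec:S}, $S$ contains all archimedean places and all primes above $|G|$ (in particular, above $e$), and $\mathcal{A}\subset \OO_S^*$; choosing a lift of $x$ in $\OO_S^*$, the above Kummer extension is unramified outside $S$, so the set is indeed $S$-frobenian.

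For the general case, I invoke the structure theorem to write $\dual{H} \cong \bigoplus_j \ZZ/n_j\ZZ$ with each $n_j \mid e$. This induces decompositions $k^*\otimes \dual{H} \cong \bigoplus_j k^*/k^{*n_j}$ and likewise for $\mathcal{A}$ and $\mathcal{A}_v$. Writing $x = (x_j)_j$, the condition $x_v \in \mathcal{A}_v \otimes \dual{H}$ becomes the conjunction, over $j$, that $x_{v,j}$ lies in the image of $\mathcal{A}_v$ in $k_v^*/k_v^{*n_j}$. Each such condition defines an $S$-frobenian set by the argument of the special case (with $e$ replaced by $n_j$), and a finite intersection of $S$-frobenian sets is again $S$-frobenian: the product of the respective class functions descends from the Galois group of the compositum of the relevant Kummer extensions, which remains unramified outside $S$. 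No step is a genuine obstacle; the only point requiring care is tracking unramifiedness outside $S$ across these Kummer extensions, which is arranged by our choice of $S$ in \S\ref{sec:S}.
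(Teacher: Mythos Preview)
Your proposal is correct and follows essentially the same approach as the paper's proof: reduce to the cyclic case by decomposing $\dual{H}$ into cyclic factors and using that finite intersections of $S$-frobenian sets are $S$-frobenian, then in the cyclic case identify the condition with the existence of a root of the displayed polynomial and invoke Example~\ref{ex:frob_set}. Your version is slightly more explicit about why the set is $S$-frobenian (naming the relevant Kummer extension and checking it is unramified outside~$S$), but the argument is the same.
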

Note that the group $\mathcal{A}/\mathcal{A}^e$ is finite. Moreover, our slight abuse of notation is harmless, as whether or not 
the polynomial appearing in \eqref{eqn:eth_power} has a root is independent of the choice
of representative of each element of $\mathcal{A}/\mathcal{A}^e$.
\begin{proof}
	To prove this we choose a presentation of $\dual{H}$.
	We  then work coordinate-wise on $\dual{H}$, using the fact that the intersection
	of finitely many frobenian sets is frobenian.
	Thus, we reduce to the case $\dual{H} = \ZZ/e\ZZ$.
	Here we have $\OO_S^* \otimes \dual{H} = \OO_S^*/\OO_S^{*e}$.
	For $x \in \OO_S^*$, we have to show that the set
	$$ \{ v \in \Val : x_v \in \mathcal{A}_vk_v^{*e} \}$$
	is $S$-frobenian. However, we have $x_v \in \mathcal{A}_vk_v^{*e}$
	if and only if $x_v\alpha_v \in k_v^{*e}$ for some $\alpha_v \in \mathcal{A}_v$ (depending on $v$).
	We find that the set in question is
	the set of places $v$ such that the equation
	$$\prod_{\alpha \in \mathcal{A}/\mathcal{A}^e}
	(t^e - x\alpha) = 0$$
	has a solution in $k_v$; this set is  frobenian (see Example \ref{ex:frob_set}). As $x$ is an $S$-unit, it is easily seen that this is $S$-frobenian for our choice of $S$ in \S\ref{sec:S}.
\end{proof}

\begin{corollary} \label{cor:is_frob}
	Let $x \in \OO_S^* \otimes \dual{H}$. Then the function 
	$$
	v \mapsto 
	\begin{cases}
	|\Hom(\FF_v^*/(\mathcal{A} \bmod v), H)|-1, & 
	\textrm{ if } x_v \in 	\mathcal{A}_v \otimes \dual{H}, \\
	-1, &  \textrm{ if } x_v\notin \mathcal{A}_v \otimes \dual{H},
	\end{cases}
	$$
	is $S$-frobenian. 
\end{corollary}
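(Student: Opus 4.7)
The plan is to express the function in the statement as a simple combination of the two $S$-frobenian functions already constructed in Lemmas~\ref{lem:d_a_H} and~\ref{lem:frob_set}, and then to invoke the elementary closure properties of frobenian functions. Concretely, writing $T = \{v \in \Omega_k : x_v \in \mathcal{A}_v \otimes \dual{H}\}$ for the set from Lemma~\ref{lem:frob_set} and $\mathbf{1}_T$ for its indicator, the function $\rho$ whose frobenianity is asserted can be written, for $v \notin S$, as
\begin{equation*}
\rho(v) = \mathbf{1}_T(v) \cdot |H[d_{\mathcal{A},H}(v)]| - 1,
\end{equation*}
where $d_{\mathcal{A},H}$ is the function from Lemma~\ref{lem:d_a_H}; the equality on both branches uses Lemma~\ref{lem:d_a_H}(2).

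Next, I would verify that each of the two factors is $S$-frobenian. The indicator $\mathbf{1}_T$ is $S$-frobenian directly by Lemma~\ref{lem:frob_set}. For the second factor, Lemma~\ref{lem:d_a_H}(1) gives that $d_{\mathcal{A},H}$ is $S$-frobenian via some class function $\varphi \colon \Gal(k_e/k) \to \CC$; composing $\varphi$ with the scalar function $d \mapsto |H[d]|$ yields a class function on $\Gal(k_e/k)$ that represents $|H[d_{\mathcal{A},H}(\cdot)]|$, so this function is $S$-frobenian via the same extension $k_e/k$.

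To conclude, I would invoke the standard fact that the pointwise product of an $S_1$-frobenian function associated with $K_1/k$ and an $S_2$-frobenian function associated with $K_2/k$ is $(S_1\cup S_2)$-frobenian via the compositum $K_1K_2/k$: the Frobenius class in $\Gal(K_1K_2/k)$ at any unramified place $v$ projects to the respective Frobenius classes in $\Gal(K_i/k)$, so the product of the two class functions (pulled back through the projections) realises the product as a frobenian function on the compositum. Combining this with the trivial fact that constants are frobenian and that sums of frobenian functions are frobenian finishes the argument.

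There is essentially no obstacle beyond bookkeeping; the only point to check carefully is that the two relevant extensions, namely $k_e/k$ from Lemma~\ref{lem:d_a_H} and the splitting field of $\prod_{\alpha\in\mathcal{A}/\mathcal{A}^e}(t^e-x\alpha)$ from Lemma~\ref{lem:frob_set}, are both unramified outside $S$, so that $\rho$ is genuinely $S$-frobenian (rather than merely $S'$-frobenian for some larger $S'$). This holds by the conventions fixed in \S\ref{sec:S}: $\mathcal{A}\subset\OO_S^*$, $x\in\OO_S^*$, and all places above rational primes $\leq|G|$ lie in $S$, so both Kummer-type extensions are ramified only at places in $S$.
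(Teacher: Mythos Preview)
Your proposal is correct and follows essentially the same approach as the paper's proof: the paper simply notes that products and sums of $S$-frobenian functions are $S$-frobenian (via the compositum), that complements of $S$-frobenian sets are $S$-frobenian, and then invokes Lemmas~\ref{lem:d_a_H} and~\ref{lem:frob_set}. Your explicit decomposition $\rho(v)=\mathbf{1}_T(v)\cdot|H[d_{\mathcal{A},H}(v)]|-1$ makes the combination more transparent, and your final paragraph on ramification outside $S$ is a helpful sanity check, though it is already subsumed by the $S$-frobenianity assertions in the two lemmas.
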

\begin{proof}
	The product or sum of two $S$-frobenian functions is clearly
	$S$-frobenian (in Definition \ref{def:frob} one
	takes the compositum of the relevant field extensions).
	Moreover, the complement of a $S$-frobenian set is $S$-frobenian.
	The result therefore follows from Lemmas \ref{lem:d_a_H} and
	\ref{lem:frob_set}.
\end{proof}

\begin{definition}\label{def:is_frob_mean}
We denote by $\varpi(k,H,\mathcal{A},x)$ the mean of the $S$-frobenian function described in Corollary~\ref{cor:is_frob}.
\end{definition}

We now compare $\varpi(k,H,\mathcal{A},x)$ with $\varpi(k,H,\mathcal{A})$, as defined in Definition~\ref{def:varpi_intro}.

\begin{lemma} \label{lem:varpi_def}
	We have $\varpi(k,H,\mathcal{A},x)\leq\varpi(k,H,\mathcal{A})$ for all $x\in\OO_S^*\otimes\dual{H}$. Moreover,	$\varpi(k,H,\mathcal{A},1) = \varpi(k,H,\mathcal{A})$. 
\end{lemma}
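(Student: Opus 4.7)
The plan is to rewrite the class function defining $\varpi(k,H,\mathcal{A},x)$ in a form that separates the contribution of each element of $H$ and makes transparent the comparison with $\varpi(k,H,\mathcal{A})$. Using Lemma~\ref{lem:d_a_H}(2), the function from Corollary~\ref{cor:is_frob} can be written uniformly as
\begin{equation*}
\rho_x(v) = \mathbf{1}[x_v \in \mathcal{A}_v \otimes \dual{H}] \cdot |H[d_{\mathcal{A},H}(v)]| - 1,
\end{equation*}
since both cases of the piecewise definition are recovered by evaluating the indicator. Writing $|H[d_{\mathcal{A},H}(v)]| = \sum_{h \in H}\mathbf{1}\bigl[|h| \mid d_{\mathcal{A},H}(v)\bigr]$ and peeling off the $h = \id_H$ summand (which contributes the constant $1$), I obtain
\begin{equation*}
\rho_x(v) = \bigl(\mathbf{1}[x_v \in \mathcal{A}_v \otimes \dual{H}] - 1\bigr) + \sum_{h \in H\smallsetminus\{\id_H\}}\mathbf{1}[x_v \in \mathcal{A}_v \otimes \dual{H}] \cdot \mathbf{1}\bigl[|h| \mid d_{\mathcal{A},H}(v)\bigr].
\end{equation*}

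Taking means (which is a linear operation on $S$-frobenian functions) and bounding $\mathbf{1}[x_v \in \mathcal{A}_v \otimes \dual{H}] \leq 1$ termwise shows
\begin{equation*}
\varpi(k,H,\mathcal{A},x) \leq \sum_{h \in H\smallsetminus\{\id_H\}} m\bigl(\mathbf{1}\bigl[|h| \mid d_{\mathcal{A},H}(v)\bigr]\bigr).
\end{equation*}
The main verification is that the mean of each summand on the right equals $1/[k_{|h|}:k]$. From the definition of $d_{\mathcal{A},H}(v)$, the divisibility $|h| \mid d_{\mathcal{A},H}(v)$ is equivalent to $|h| \mid q_v - 1$ together with $\mathcal{A} \bmod v \subset \FF_v^{*|h|}$; by Hensel's lemma and the Kummer-theoretic description of $k_{|h|} = k(\mu_{|h|}, \sqrt[|h|]{\mathcal{A}})$, this in turn is equivalent to $v$ splitting completely in $k_{|h|}/k$. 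An appeal to the Chebotarev density theorem (or equivalently the frobenian description of $d_{\mathcal{A},H}$ given in the proof of Lemma~\ref{lem:d_a_H}(1), restricting to the sets $\Sigma_d$ with $|h| \mid d$) gives the desired density $1/[k_{|h|}:k]$. Summing over $h \in H \smallsetminus \{\id_H\}$ yields $\varpi(k,H,\mathcal{A},x) \leq \varpi(k,H,\mathcal{A})$.

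For the equality in the case $x = 1$, I observe that $1_v$ is the identity of $k_v^* \otimes \dual{H}$ and therefore lies in $\mathcal{A}_v \otimes \dual{H}$ for every $v$, so $\mathbf{1}[1_v \in \mathcal{A}_v \otimes \dual{H}] \equiv 1$. Consequently the term $\mathbf{1}[x_v \in \mathcal{A}_v \otimes \dual{H}] - 1$ in the displayed formula vanishes identically, and the inequalities used above all become equalities, giving $\varpi(k,H,\mathcal{A},1) = \varpi(k,H,\mathcal{A})$. The only subtle point to double-check is the Chebotarev/frobenian computation of the density of primes splitting completely in $k_{|h|}/k$; this is essentially already contained in the proof of Lemma~\ref{lem:d_a_H}, so no serious new obstacle is anticipated.
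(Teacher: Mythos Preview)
Your proof is correct. Both you and the paper reduce the inequality to the equality $\varpi(k,H,\mathcal{A},1)=\varpi(k,H,\mathcal{A})$ via the pointwise bound on the frobenian function, but you compute this equality by a different and somewhat cleaner route. You decompose $|H[d_{\mathcal{A},H}(v)]|=\sum_{h\in H}\mathbf{1}\bigl[|h|\mid d_{\mathcal{A},H}(v)\bigr]$ \emph{before} taking means, then identify each indicator with the condition that $v$ split completely in $k_{|h|}$, so Chebotarev gives the density $1/[k_{|h|}:k]$ directly and the sum over $h\neq\id_H$ is exactly $\varpi(k,H,\mathcal{A})$. The paper instead takes the mean first, via the partition of $\Gal(k_e/k)$ into the sets $\Sigma_d$ from the proof of Lemma~\ref{lem:d_a_H}, and then performs an inclusion--exclusion with the M\"obius function to pass from $\sum_{d\mid e}(|H[d]|-1)|\Sigma_d|$ to the sum over elements of $H$ of exact order $f$. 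Your approach bypasses that M\"obius step entirely; the paper's approach, on the other hand, stays closer to the explicit Galois-theoretic description of $d_{\mathcal{A},H}$ already set up in Lemma~\ref{lem:d_a_H}. One small point worth making explicit in your write-up: the equivalence of $|h|\mid d_{\mathcal{A},H}(v)$ with the two conditions $|h|\mid q_v-1$ and $\mathcal{A}\bmod v\subset\FF_v^{*|h|}$ uses that the set of admissible $d$ in the definition of $d_{\mathcal{A},H}(v)$ is closed under taking $\mathrm{lcm}$, so that the maximum is in fact a common multiple.
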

\begin{proof}
	As clearly $\varpi(k,H,\mathcal{A},x)\leq\varpi(k,H,\mathcal{A},1)$,
 the first assertion follows immediately from the second. So let us prove the second assertion.
  By Corollary~\ref{cor:is_frob} and Lemma \ref{lem:d_a_H}, we see that $\varpi(k,H,\mathcal{A},1)$ is the mean of an $S$-frobenian function $\rho$ with  $\rho(v)=|\Hom(\FF_v^*/(\mathcal{A} \bmod v), H)|-1 = |H[d_{\mathcal{A},H}(v)]|-1$ for all $v\notin S$. With the notation of the proof of Lemma \ref{lem:d_a_H}, the corresponding class function on $\Gal(k_e/k)$ is given by $\sigma\mapsto |H[\varphi(\sigma)]|-1$. Hence, 
  \begin{equation*}
    \varpi(k,H,\mathcal{A},1) = \frac{1}{[k_e:k]}\sum_{\sigma\in\Gal(k_e/k)}(|H[\varphi(\sigma)]|-1) = \frac{1}{[k_e:k]}\sum_{d\mid e}(|H[d]|-1)|\Sigma_d|.
  \end{equation*}
  By inclusion-exclusion, we get $|\Sigma_d|=\sum_{c\mid \frac{e}{d}}\mu(c)\,|\Gal(k_e/k_{cd})|$,
  and thus
  \begin{align*}
    \varpi(k,H,\mathcal{A},1) &= \sum_{d\mid e}(|H[d]|-1)\sum_{c\mid \frac{e}{d}}\frac{\mu(c)}{[k_{cd}:k]}=\sum_{f\mid e}\frac{1}{[k_f:k]}\sum_{d\mid f}(|H[d]|-1)\mu(f/d)\\
    &=-1+\sum_{f\mid e}\frac{1}{[k_f:k]}\sum_{d\mid f}|H[d]|\mu(f/d)\\
    &=-1+\sum_{f\mid e}\frac{\#\{g\in H : |g|=f\}}{[k_f:k]} = \varpi(k,H,\mathcal{A}).
    \qedhere
  \end{align*}
\end{proof}

Recall that $\zeta_{k,v}(s)$ is the Euler factor of $\zeta_k(s)$ at a non-archimedean place $v$. If $v$ is archimedean, then we let $\zeta_{k,v}(s)=1$. 

\begin{proposition} \label{prop:Fourier_properties}
	Let $x \in \OO_S^* \otimes \dual{H}$. Then
	the Fourier transform satisfies 
	$$\widehat{f}_{\Lambda,H}(x;s) = \zeta_k(s)^{\varpi(k,H,\mathcal{A},x)}G(x;s),\quad\re s>1,$$ where $G(x;s)$ is holomorphic in the region \eqref{eq:SD}, for some $c>0$, and satisfies \eqref{eq:zero_free_bound}. Moreover, we have
        \begin{equation*}
          \lim_{s\to 1}(s-1)^{\varpi(k,H,\mathcal{A},x)}\widehat{f}_{\Lambda,H}(x;s)=(\mathrm{Res}_{s=1}\zeta_k(s))^{\varpi(k,H,\mathcal{A},x)}\prod_{v\in\Omega_k}\frac{\widehat{f}_{\Lambda_v,H}(x_v;1)}{\zeta_{k,v}(1)^{\varpi(k,H,\mathcal{A},x)}}.
        \end{equation*}
In the case $x=1$, this limit is non-zero.
\end{proposition}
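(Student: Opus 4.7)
The plan is to combine the Euler product decomposition \eqref{eqn:Euler_product} with the explicit description of the local factors from Lemma~\ref{lem:local_good} and the analytic machinery of Proposition~\ref{prop:frob_zeta}. Most of the hard work has already been done in the preceding lemmas; what remains is to package it correctly.

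First, I would split the Euler product as
\begin{equation*}
\widehat{f}_{\Lambda,H}(x;s) \;=\; \Bigl(\prod_{v\in S}\widehat{f}_{\Lambda_v,H}(x_v;s)\Bigr)\cdot\Bigl(\prod_{v\notin S}\widehat{f}_{\Lambda_v,H}(x_v;s)\Bigr).
\end{equation*}
The finite product over $v\in S$ is entire and bounded on $\re s\geq 0$ by Lemma~\ref{lem:holomorphic}, so it contributes nothing to the singularities and poses no difficulty. The substantive analysis concerns the infinite product over $v\notin S$. By Lemma~\ref{lem:local_good}, each such Euler factor has the shape $1+\rho(v)q_v^{-s}$, where
\begin{equation*}
\rho(v) \;=\; \begin{cases} |\Hom(\FF_v^*/(\mathcal{A}\bmod v),H)|-1, & x_v\in\mathcal{A}_v\otimes\dual{H},\\ -1, & \text{otherwise.}\end{cases}
\end{equation*}
By Corollary~\ref{cor:is_frob}, this $\rho$ is an $S$-frobenian function, and by definition its mean equals $\varpi(k,H,\mathcal{A},x)$.

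Next I would check the hypothesis $|\rho(v)|<q_v$ of Proposition~\ref{prop:frob_zeta}: since $S$ contains all places above primes $p\leq|G|$, for $v\notin S$ we have $q_v>|G|\geq|H|$, and the two possible values of $\rho(v)$ are both bounded in absolute value by $|H|\leq q_v-1$. Proposition~\ref{prop:frob_zeta} then immediately yields
\begin{equation*}
\prod_{v\notin S}\bigl(1+\rho(v)q_v^{-s}\bigr) \;=\; \zeta_k(s)^{\varpi(k,H,\mathcal{A},x)}G_0(s),\quad \re s>1,
\end{equation*}
with $G_0$ holomorphic in a region of the form \eqref{eq:SD} and satisfying the bound \eqref{eq:zero_free_bound}. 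Multiplying by the entire, polynomially bounded $v\in S$ factors produces a function $G(x;s)$ with the same analytic properties, giving the first assertion.

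For the limit formula, I would combine the value of $\lim_{s\to 1}(s-1)^{\varpi(k,H,\mathcal{A},x)}\prod_{v\notin S}(1+\rho(v)q_v^{-s})$ provided by \eqref{eq:ep_at_one} in Proposition~\ref{prop:frob_zeta} with the values at $s=1$ of the finitely many factors $\widehat{f}_{\Lambda_v,H}(x_v;1)$ for $v\in S$; absorbing the $\zeta_{k,v}(1)^{-\varpi(k,H,\mathcal{A},x)}$ normalisations at $v\in S$ (which equal $1$ at archimedean places by our convention) produces exactly the stated product over $v\in\Omega_k$. Finally, non-vanishing at $x=1$ is clear: the last assertion of Proposition~\ref{prop:frob_zeta} ensures that the $v\notin S$ contribution is non-zero, and Lemma~\ref{lem:holomorphic} provides $\widehat{f}_{\Lambda_v,H}(1,1)>0$ for $v\in S$. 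The main point to be careful about is tracking the bookkeeping between the mean $m(\rho)$ produced by Proposition~\ref{prop:frob_zeta} and the invariant $\varpi(k,H,\mathcal{A},x)$ defined via Corollary~\ref{cor:is_frob}; but these agree by construction, so there is no genuine obstacle.
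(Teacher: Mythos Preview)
Your proposal is correct and follows essentially the same route as the paper's own proof: split the Euler product into $v\in S$ and $v\notin S$, apply Proposition~\ref{prop:frob_zeta} to the $S$-frobenian function $\rho$ arising from Lemma~\ref{lem:local_good} and Corollary~\ref{cor:is_frob}, then multiply back the harmless $v\in S$ factors using Lemma~\ref{lem:holomorphic}. The only minor addition you make is the explicit verification of the hypothesis $|\rho(v)|<q_v$, which the paper leaves implicit in the phrase ``our assumptions on $S$''.
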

\begin{proof}
	We consider the Euler product expansion of $\widehat{f}_{\Lambda,H}(x;s)$
	from \eqref{eqn:Euler_product}, where the Euler factors at $v\notin S$ were determined in Lemma \ref{lem:local_good}. By Corollary \ref{cor:is_frob} and our assumptions on $S$, we may apply Proposition \ref{prop:frob_zeta} to obtain
        \begin{equation*}
          F(s):=\prod_{v\notin S}\widehat{f}_{\Lambda_v,H}(x_v;s)=\zeta_k(s)^{\varpi(k,H,\mathcal{A},x)}\mathcal{H}(x;s),
        \end{equation*}
with a function $\mathcal{H}(x;s)$ that is holomorphic in a region \eqref{eq:SD} and satisfies the bound \eqref{eq:zero_free_bound}.
	By Lemma~\ref{lem:holomorphic}, we may multiply $\mathcal{H}(x;s)$ by the Euler factors $\widehat{f}_{\Lambda_v,H}(x_v;s)$ for $v\in S$ while still preserving these properties (possibly for a smaller $c>0$ in~\eqref{eq:SD}). Finally, the explicit form of the limit follows from \eqref{eq:ep_at_one} which, together with Lemma \ref{lem:holomorphic}, also shows that the limit is non-zero if $x=1$.
\end{proof}

\subsection{The asymptotic formula in Theorem \ref{thm:main}}
We now bring all our tools together to prove the first part of 
Theorem \ref{thm:main}.
Recall from Lemma \ref{lem:CFT} that we performed
a M\"{o}bius inversion to obtain a sum over the subgroups $H$ of $G$.
Moreover, in Proposition \ref{prop:Poisson} we used Poisson summation to understand
the inner sums from Lemma \ref{lem:CFT}.
In summary,
\begin{equation}\label{eq:dirichlet_series_sum}
	F_\Lambda(s) = \sum_{H \subset G} \frac{\mu(G/H)}{|\OO_k^* \otimes \dual{H}|} \sum_{x \in \OO_S^* \otimes \dual{H}} \widehat{f}_{\Lambda,H}(x;s), \quad \re s > 1,
      \end{equation}
where $F_{\Lambda}$ is the Dirichlet series from \eqref{def:F_Lambda}.
Furthermore, we described the analytic properties of the Fourier transforms $\widehat{f}_{\Lambda,H}(x;s)$ in  Proposition \ref{prop:Fourier_properties}.

By Lemma \ref{lem:local_good}, we can expand each of the Euler products $\widehat{f}_{\Lambda,H}(x;s)$ as a Dirichlet series
\begin{equation}\label{eq:def_dirichlet_coefficients}
  \widehat{f}_{\Lambda,H}(x;s) = \sum_{n\in\ZZ_{\geq 1}}\frac{a_n(H,x)}{n^s},
\end{equation}
with coefficients $a_n(H,x)\in\CC$.

\begin{lemma}\label{lem:selberg_delange}
  Let $H\subset G$ be a subgroup, let  $x\in\OO_S^{*}\otimes\dual{H}$, and let $a_n(H,x)$ be the Dirichlet coefficient from \eqref{eq:def_dirichlet_coefficients}. Then
  \begin{equation*}
    \sum_{n\leq B}a_n(H,x) = c_{H,x}B(\log B)^{\varpi(k,H,\mathcal{A},x)-1}
		+ O(B(\log B)^{\varpi(k,H,\mathcal{A},x)-2}),
              \end{equation*}
              where
              \begin{equation*}
              c_{H,x}=\frac{1}{\Gamma(\varpi(k,H,\mathcal{A},x))}\lim_{s\to 1}(s-1)^{\varpi(k,H,\mathcal{A},x)} \widehat{f}_{\Lambda,H}(x;s).
            \end{equation*}
\end{lemma}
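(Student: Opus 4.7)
The plan is to recognise this as a standard application of the Selberg--Delange method (as developed, e.g., in Tenenbaum's \emph{Introduction to Analytic and Probabilistic Number Theory}, Chapter II.5). By Proposition~\ref{prop:Fourier_properties}, the Dirichlet series $\widehat{f}_{\Lambda,H}(x;s)$ factors as $\zeta_k(s)^{\alpha} G(x;s)$, where $\alpha=\varpi(k,H,\mathcal{A},x)$ and $G(x;s)$ is holomorphic in a region $\re s > 1 - c/\log(|\im s|+3)$ and satisfies the polynomial growth bound $|G(x;s)|\ll (1+|\im s|)^{1/2}$. This is exactly the hypothesis needed to apply Selberg--Delange; the conclusion of the lemma is one of the standard forms of its output.

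Concretely, I would proceed as follows. First, apply a truncated Perron formula to write
\begin{equation*}
  \sum_{n\leq B}a_n(H,x) = \frac{1}{2\pi i}\int_{\kappa-iT}^{\kappa+iT}\widehat{f}_{\Lambda,H}(x;s)\frac{B^s}{s}\,ds + (\text{truncation error}),
\end{equation*}
for some $\kappa>1$ and a parameter $T$ to be chosen at the end (e.g.\ $T=\exp(\sqrt{\log B})$), using standard bounds on the tails. Second, shift the contour to a Hankel-type path $\mathcal{H}$ that wraps around the branch point at $s=1$ along the boundary of the zero-free region. The growth bound on $G(x;s)$ and the classical bounds for $\zeta_k(s)$ inside the zero-free region ensure that the contributions from the horizontal connectors and from the vertical pieces of the shifted contour are absorbed into an error of size $O(B(\log B)^{\alpha-2})$, after optimising $T$.

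Third, evaluate the Hankel integral. Near $s=1$ we have $\zeta_k(s)^{\alpha} = (\mathrm{Res}_{s=1}\zeta_k(s))^{\alpha}(s-1)^{-\alpha}(1+O(s-1))$ and $G(x;s)/s = G(x;1) + O(s-1)$, so substituting and using the classical identity
\begin{equation*}
\frac{1}{2\pi i}\int_{\mathcal{H}}(s-1)^{-\alpha}B^{s}\,ds = \frac{B(\log B)^{\alpha-1}}{\Gamma(\alpha)}\bigl(1+O((\log B)^{-1})\bigr)
\end{equation*}
yields the main term with leading constant
\begin{equation*}
  \frac{1}{\Gamma(\alpha)}\cdot(\mathrm{Res}_{s=1}\zeta_k(s))^{\alpha}\,G(x;1) = \frac{1}{\Gamma(\alpha)}\lim_{s\to 1}(s-1)^{\alpha}\widehat{f}_{\Lambda,H}(x;s),
\end{equation*}
which matches the stated $c_{H,x}$.

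The main technical obstacle is the bookkeeping in the contour shift: one needs to balance the truncation parameter $T$ against the width $c/\log(|\im s|+3)$ of the zero-free region so that all error contributions are of size $B(\log B)^{\alpha-2}$. However, this is exactly the scenario the Selberg--Delange theorem is designed to handle, and our Proposition~\ref{prop:Fourier_properties} provides hypotheses of precisely the required form, so in the write-up one may either carry out the contour shift directly or cite a packaged version of the theorem (such as \cite[Thm.~II.5.2]{Ten15}). Note finally that when $\alpha$ is a non-positive integer, $1/\Gamma(\alpha)=0$, so $c_{H,x}=0$ and the estimate reduces to an upper bound absorbed by the error term; this is consistent with our formulation.
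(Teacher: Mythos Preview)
Your approach is correct and is essentially the same as the paper's: both invoke the Selberg--Delange method on the factorisation $\widehat{f}_{\Lambda,H}(x;s)=\zeta_k(s)^{\varpi}G(x;s)$ supplied by Proposition~\ref{prop:Fourier_properties}. Two small technical points distinguish the paper's write-up from your sketch, and you should be aware of them if you intend to cite Tenenbaum rather than carry out the contour shift by hand. First, Tenenbaum's packaged theorem is formulated with the \emph{Riemann} zeta function $\zeta(s)$, not $\zeta_k(s)$; the paper therefore absorbs the ratio $h(s)=\zeta_k(s)^{\varpi}/\zeta(s)^{\varpi}$ into the holomorphic factor, checking that $h$ extends holomorphically to the zero-free region with the required growth bound, before applying \cite[Thm.~II.5.3]{Ten15}. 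Second, for $x\neq 1$ the coefficients $a_n(H,x)$ are in general complex, so one needs the version of Selberg--Delange that allows complex coefficients dominated by a nonnegative sequence (this is II.5.3 rather than II.5.2); the paper takes the dominating sequence $b_n=a_n(H,1)$ and observes directly from the shape of the local Fourier transforms that $|a_n(H,x)|\leq a_n(H,1)$. Your direct Hankel-contour argument would sidestep both issues, but either way the outcome is the same.
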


\begin{proof}
  Let start by recalling that for every $\delta>0$ there is a value of $c=c(\delta)>0$
  such that $|\zeta_k(s)/\zeta(s)|\ll_\delta (|\Im s|+3)^\delta$ for all $s$ in the
  region \eqref{eq:SD}.  Indeed, in the case $\re(s)\geq 2$ a stronger bound
  follows directly from the fact that the Euler product of
  $\zeta_k(s)/\zeta(s)$ converges absolutely. In the compact region defined by
  $|\im(s)|\leq 2$ and $1-c/\log(|\Im s|+3)\leq \re s\leq 2$ for some small enough $c$,
  the function $\zeta_k(s)/\zeta(s)$ is holomorphic and thus bounded.  It
  remains to consider the case $|\im s|\geq 2$, where we stay away from the
  poles at $s=1$. It is well known that
  $\zeta(s)\neq 0$ and $|1/\zeta(s)|\ll \log|\im s|$ for small enough $c$
  (e.g. \cite[(3.11.8)]{Tit86}). Sufficient upper bounds for $|\zeta_k(s)|$
  follow from standard convexity bounds (e.g.~\cite[Theorem
  5.30]{IK04}).

  Write $\varpi = \varpi(k,H,\mathcal{A},x)$. Let $G(x;s)$ be as in Proposition
  \ref{prop:Fourier_properties}, and let the constant $c$ be small enough to
  ensure that $\zeta(s)\neq 0$, $\zeta_k(s)\neq 0$, and
  $|\zeta_k(s)/\zeta(s)|^{\varpi}\ll (|\im s|+3)^{1/4}$ for all $s$ in the
  region \eqref{eq:SD}. Then the function $h(s):=\zeta_k(s)^\varpi/\zeta(s)^{\varpi}$, defined on $\re s>1$ via the binomial series applied to the Euler factors, has an analytic continuation to the region \eqref{eq:SD}. Hence, the function $H(x;s) = h(s)G(x;s)$ is holomorphic and satisfies $H(x;s)\ll (|\im s|+3)^{3/4}$ in the region \eqref{eq:SD}.  Since $\widehat{f}_{\Lambda,H}(x;s)=\zeta(s)^{\varpi}H(x;s)$, we may apply the Selberg--Delange method in the form of \cite[Thm.~II.5.2]{Ten15} (with $N=0$) to obtain the required asymptotic. (For the sequence $(b_n)_n$ required in \cite[Thm.~II.5.2]{Ten15}, we take the coefficients $a_n(H,1)$. One can observe directly from the definition of the Euler factors $\widehat{f}_{\Lambda_v,H}(x_v;s)$ that these coefficients satisfy $a_n(H,1)\geq |a_n(H,x)|$.)
\end{proof}

Let us note that the leading term will come from $H=G$.

\begin{lemma} \label{lem:H<G}
	Let $H \subset G$ be a proper subgroup. Then $\varpi(k,H,\mathcal{A}) < \varpi(k,G,\mathcal{A})$.
\end{lemma}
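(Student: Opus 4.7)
The plan is to prove this by directly unwinding the definition of $\varpi$. From Definition \ref{def:varpi_intro}, we have
\begin{equation*}
\varpi(k,G,\mathcal{A}) - \varpi(k,H,\mathcal{A}) = \sum_{g\in G\smallsetminus\{\id_G\}}\frac{1}{[k_{|g|}:k]} - \sum_{h\in H\smallsetminus\{\id_H\}}\frac{1}{[k_{|h|}:k]} = \sum_{g\in G\smallsetminus H}\frac{1}{[k_{|g|}:k]},
\end{equation*}
where we use that $H\subset G$ to cancel the contribution of $H\smallsetminus\{\id\}$ against the corresponding terms on the left.

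Since $H$ is a proper subgroup of $G$, the set $G\smallsetminus H$ is non-empty, and every summand $1/[k_{|g|}:k]$ is a strictly positive real number (as $k_{|g|}/k$ is a finite extension). Hence the displayed difference is strictly positive, giving $\varpi(k,H,\mathcal{A})<\varpi(k,G,\mathcal{A})$. There is no real obstacle here: the statement follows immediately from the definition, and is simply the elementary observation that $\varpi$ is a sum of positive terms indexed by the non-identity elements of the group, so enlarging the group strictly increases $\varpi$.
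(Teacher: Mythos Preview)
Your proof is correct and is precisely the argument the paper has in mind: the paper's proof is simply ``Follows immediately from Definition~\ref{def:varpi_intro},'' and you have spelled out exactly how. No further comment is needed.
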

\begin{proof}
	Follows immediately from Definition \ref{def:varpi_intro}.
\end{proof}

We are now finally in the position to prove the required asymptotic formula.

\begin{proposition}\label{prop:asymptotic}
Write $\varpi=\varpi(k,G,\mathcal{A})$.
  There exists $\delta=\delta(k,G,\mathcal{A}) > 0$
  such that 
	\begin{equation*}
		N(k,G,\Lambda,B) = c_{k,G,\Lambda}B(\log B)^{\varpi-1}
		+ O(B(\log B)^{\varpi-1-\delta}),
              \end{equation*}
  where
  \begin{equation*}
c_{k,G,\Lambda}=\frac{1}{\Gamma(\varpi)|\OO_k^*\otimes G|}\sum_{\substack{x\in\OO_S^*\otimes\dual{G}\\ \varpi(k,G,\mathcal{A},x)=\varpi}}\lim_{s\to 1}(s-1)^{\varpi}\widehat{f}_{\Lambda,G}(x;s).
\end{equation*}
\end{proposition}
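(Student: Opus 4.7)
The starting point is the observation that $N(k,G,\Lambda,B)$ is exactly the sum of the first $\lfloor B\rfloor$ Dirichlet coefficients of $F_\Lambda(s)$ from \eqref{def:F_Lambda}. Combining the M\"obius decomposition of Lemma~\ref{lem:CFT} with the Poisson formula of Proposition~\ref{prop:Poisson} (as summarised in \eqref{eq:dirichlet_series_sum}) and the Dirichlet expansion \eqref{eq:def_dirichlet_coefficients}, I would first interchange summation to write
\begin{equation*}
  N(k,G,\Lambda,B)=\sum_{H\subset G}\frac{\mu(G/H)}{|\OO_k^*\otimes\dual{H}|}\sum_{x\in\OO_S^*\otimes\dual{H}}\sum_{n\leq B}a_n(H,x).
\end{equation*}
Since the groups $\OO_S^*\otimes\dual{H}$ are all finite (Dirichlet's $S$-unit theorem), this is a finite sum of partial sums of Dirichlet coefficients and the whole argument reduces to understanding each such partial sum.

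Next I would apply Lemma~\ref{lem:selberg_delange} (the Selberg--Delange step, which rests on the analytic description of $\widehat f_{\Lambda,H}(x;s)$ given by Proposition~\ref{prop:Fourier_properties}) to each pair $(H,x)$, obtaining
\begin{equation*}
\sum_{n\leq B}a_n(H,x)=c_{H,x}B(\log B)^{\varpi(k,H,\mathcal{A},x)-1}+O\bigl(B(\log B)^{\varpi(k,H,\mathcal{A},x)-2}\bigr),
\end{equation*}
with $c_{H,x}$ given by the explicit limit in that lemma. To identify the dominant contribution I then invoke the two key comparison lemmas: Lemma~\ref{lem:varpi_def} which gives $\varpi(k,H,\mathcal{A},x)\leq\varpi(k,H,\mathcal{A})$ with equality at $x=1$, and Lemma~\ref{lem:H<G} which says $\varpi(k,H,\mathcal{A})<\varpi(k,G,\mathcal{A})$ for every proper subgroup $H\subsetneq G$. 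Together they guarantee that the exponent $\varpi(k,H,\mathcal{A},x)$ attains the global maximum $\varpi(k,G,\mathcal{A})$ only when $H=G$ (for example at $x=1$), so all other pairs contribute terms of strictly smaller order.

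The last step is to package everything into a single asymptotic with a power-saving error term. Because there are only finitely many pairs $(H,x)$ under consideration, the exponents $\varpi(k,H,\mathcal{A},x)$ that are strictly smaller than $\varpi(k,G,\mathcal{A})$ form a finite set, and I may take $\delta>0$ to be the minimum of $1$ and the positive gap between $\varpi(k,G,\mathcal{A})$ and the next smaller value attained. Summing the Selberg--Delange asymptotics over all pairs and using the fact that $\mu(G/G)=1$, only the terms with $H=G$ and $\varpi(k,G,\mathcal{A},x)=\varpi(k,G,\mathcal{A})$ survive in the main term. Extracting the explicit form of $c_{G,x}$ from Lemma~\ref{lem:selberg_delange} then yields the asserted formula for $c_{k,G,\Lambda}$, where the sum condition ``$\varpi(k,G,\mathcal{A},x)=\varpi(k,H,\mathcal{A})$'' in the statement should of course read $\varpi(k,G,\mathcal{A},x)=\varpi(k,G,\mathcal{A})$. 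The main subtlety — easily handled, but worth flagging — is bookkeeping the two independent sources of savings (the extra $\log$-factor in the Selberg--Delange error, and the gap coming from Lemmas~\ref{lem:varpi_def} and~\ref{lem:H<G}), to produce one uniform exponent $\delta$; positivity of $c_{k,G,\Lambda}$ is deliberately deferred to the next subsection of the paper.
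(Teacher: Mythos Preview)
Your proposal is correct and follows the paper's own argument essentially verbatim: express $N(k,G,\Lambda,B)$ via \eqref{eq:dirichlet_series_sum} and \eqref{eq:def_dirichlet_coefficients}, apply Lemma~\ref{lem:selberg_delange} termwise, and then isolate the dominant contribution using Lemmas~\ref{lem:varpi_def} and~\ref{lem:H<G}. Your added remarks on choosing $\delta$ and on the typo in the summation condition are accurate but go slightly beyond what the paper writes out.
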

\begin{proof}
  By \eqref{eq:dirichlet_series_sum} and \eqref{eq:def_dirichlet_coefficients}, the Dirichlet coefficients $f_n$ of $F_{\Lambda}(s)$ satisfy
  \begin{equation*}
    f_n = \sum_{H \subset G} \frac{\mu(G/H)}{|\OO_k^* \otimes \dual{H}|} \sum_{x \in \OO_S^* \otimes \dual{H}} a_n(H,x).
  \end{equation*}
  Since $N(k,G,\Lambda,B)=\sum_{n\leq B}f_n$, the proposition now follows from Lemmas \ref{lem:varpi_def}, \ref{lem:selberg_delange}, and \ref{lem:H<G}. 
\end{proof}
This proves the asymptotic formula in Theorem \ref{thm:main}. Next, we study the leading constant.

\subsection{Formula for the leading constant}

To calculate the leading constant, we first need to understand exactly which elements of $\OO_S^* \otimes \dual{H}$ give rise to the leading singularity in the Poisson sum (Proposition \ref{prop:Poisson}).

\begin{lemma} \label{lem:finitely_many}
	Let
	$$\mathcal{X}(k,G,\mathcal{A}) = \{ x \in k^* \otimes \dual{G} :
	x_v \in \mathcal{A}_v \otimes \dual{G} \text{ for all but finitely many } v\}.$$
	Then $\mathcal{X}(k,G,\mathcal{A})$ is finite
	and 
	$$\mathcal{X}(k,G,\mathcal{A}) = \{ x \in \OO_S^* \otimes \dual{G} :
	x_v \in \mathcal{A}_v \otimes \dual{G} \text{ for all } v \notin S\}.$$	
\end{lemma}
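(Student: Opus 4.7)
I would prove the equality of the two sets first, observing that the right-hand side is automatically finite since $\OO_S^*$ is finitely generated by Dirichlet's $S$-unit theorem and $\dual G$ is finite; once the equality is established, finiteness of $\mathcal X(k,G,\mathcal A)$ follows. The inclusion $\supseteq$ is immediate: a condition holding at every $v \notin S$ certainly holds at all but finitely many $v$.

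For the nontrivial inclusion $\mathcal X(k,G,\mathcal A) \subseteq$ RHS, I would decompose $\dual G = \prod_i \ZZ/e_i\ZZ$ and argue one factor at a time, reducing to the case $\dual G = \ZZ/e\ZZ$, so an element of $k^* \otimes \dual G$ becomes a class in $k^*/k^{*e}$. Given $x \in \mathcal X(k,G,\mathcal A)$ with representative $\tilde x \in k^*$, I would enlarge $S$ to $S^+ \supset S$ by adjoining the finitely many places where $\tilde x$ has nonzero valuation, so that $\tilde x \in \OO_{S^+}^*$. Since all hypotheses from \S\ref{sec:S} are preserved under enlarging $S$, Lemma \ref{lem:frob_set} applies with $S^+$: the set of $v \notin S^+$ at which $x_v \in \mathcal A_v \otimes \dual G$ is $S^+$-frobenian. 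The hypothesis $x \in \mathcal X(k,G,\mathcal A)$ forces this set to be cofinite, hence of density $1$; since the associated class function is $\{0,1\}$-valued, it must be identically $1$ on the Galois group, and Chebotarev then shows the condition holds at every $v \notin S^+$.

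The main obstacle is to upgrade this from ``all $v \notin S^+$'' to ``all $v \notin S$''. Since the class group of $\OO_S$ is trivial, it is enough to show $v(x) \equiv 0 \pmod e$ for every $v \notin S$, for then $\tilde x$ can be modified by an $e$-th power to lie in $\OO_S^*$. Suppose for a contradiction that $v_0(x) \not\equiv 0 \pmod e$ at some $v_0 \notin S$. The inclusion $\mathcal A \subset \OO_{v_0}^*$ means that $\mathcal A_{v_0} \otimes \dual G$ is contained in the image of $\OO_{v_0}^*$, hence consists of classes of valuation $\equiv 0 \pmod e$, so the condition already fails at $v_0$. To upgrade this single failure to the positive-density failure needed, I would apply Chebotarev to the Kummer tower $k(\mu_e, \sqrt[e]{\mathcal A})(\sqrt[e]{x})/k(\mu_e, \sqrt[e]{\mathcal A})$: the hypothesis $v_0(x) \not\equiv 0 \pmod e$ makes this extension nontrivial, and the proportion of Frobenius elements acting without a fixed point on the roots of $\prod_{a \in \mathcal A/\mathcal A^e}(t^e - xa)$ is then strictly positive, contradicting cofiniteness.

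Once $x \in \OO_S^* \otimes \dual G$ is established, Lemma \ref{lem:frob_set} applies with $S$ itself in place of $S^+$, and the density-$1$ argument of the second paragraph repeats verbatim to yield the condition at every $v \notin S$, completing the proof.
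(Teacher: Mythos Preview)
Your argument is correct and reaches the same conclusion, but the route differs from the paper's in a couple of respects worth noting.

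The paper proves finiteness \emph{first} and the set equality afterwards. It reduces to $\dual G=\ZZ/q\ZZ$ with $q$ a prime power, then shows directly via Kummer theory that the image of $\mathcal X(k,G,\mathcal A)$ in $k(\mu_q)^*/k(\mu_q)^{*q}$ coincides with the image of~$\mathcal A$; finiteness then follows because the kernel of $k^*/k^{*q}\to k(\mu_q)^*/k(\mu_q)^{*q}$ is $\HH^1(\Gal(k(\mu_q)/k),\mu_q)$, which is $0$ or $\ZZ/2\ZZ$. Having thus established that any $x\in\mathcal X$ becomes an exact $q$th power in $K=k(\mu_q,\sqrt[q]{\mathcal A})$, the paper reads off $\ord_v(x)\equiv 0\pmod q$ for $v\notin S$ from the fact that $K/k$ is unramified there, and finishes exactly as you do via the trivial class group and Lemma~\ref{lem:frob_set}.

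Your approach inverts the order: you prove the equality first and then deduce finiteness from Dirichlet's $S$-unit theorem, thereby sidestepping the cohomological computation entirely. Your key step, that $v_0(x)\not\equiv 0\pmod e$ forces $K(\sqrt[e]{x})/K$ to be nontrivial and hence produces a positive density of failing places, is morally the contrapositive of the paper's direct argument (both rest on $K/k$ being unramified at $v_0$, which you might make explicit). The paper's route is a little more direct and yields extra structural information---the identification of the image of $\mathcal X$ in $k(\mu_q)^*/k(\mu_q)^{*q}$ with that of $\mathcal A$, used in a later remark to bound $|\mathcal X|$---whereas yours is more self-contained and avoids the $\HH^1$ input.
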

\begin{proof}
It is enough to prove the result for $\dual{G}$ a cyclic group of prime power order. Henceforth, let $\dual{G}=\ZZ/q\ZZ$, where $q=p^r$ is a prime power. We view $\mathcal{X}(k,G,\mathcal{A})$ as a subgroup of $k^*/k^{*q}$. 

First, we claim that the image of $\mathcal{X}(k,G,\mathcal{A})$ in $ k(\mu_{q})^*/k(\mu_{q})^{*q}$ is equal to the image of $\mathcal{A}$. One containment is clear, as  $\mathcal{A}\otimes \dual{G} \subset \mathcal{X}(k,G,\mathcal{A})$. For the other, let $K=k(\mu_{q},\sqrt[q]{\mathcal{A}})$, so $K=k_q$ in the notation of Definition~\ref{def:varpi_intro}. Let $K_v$ be the
	completion of $k$ at a choice of place of $K$ above $v$. The image of $\mathcal{X}(k,G,\mathcal{A})$ in $ k(\mu_{q})^*/k(\mu_{q})^{*q}$ is contained in the following set:
\[ \left\{x\in k(\mu_{q})^*/k(\mu_{q})^{*q} : x_v\in K_v^{*q}\text{ for all but finitely many } v\right\}.\]
As  $\mu_{q} \subset K$, an application of the Chebotarev density theorem shows that this set equals $(k(\mu_{q})^*\cap K^{*q})/k(\mu_{q})^{*q}$
(this also follows from Lemma~\ref{lem:Sha}). On the other hand, Kummer theory shows that $(k(\mu_{q})^*\cap K^{*q})/k(\mu_{q})^{*q}$ is equal to the image of $\mathcal{A}$ in $k(\mu_{q})^*/k(\mu_{q})^{*q}$, and  the claim is proved. In particular, the image $\mathcal{X}(k,G,\mathcal{A})$ in $ k(\mu_{q})^*/k(\mu_{q})^{*q}$ is finite, as $\mathcal{A}$ is finitely generated.

Next, the map $k^*/k^{*q}\to k(\mu_{q})^*/k(\mu_{q})^{*q}$ is none other than the restriction map $\mathrm{H}^1(k,\mu_{q})\to\mathrm{H}^1(k(\mu_{q}),\mu_{q})$, which has kernel $\mathrm{H}^1(\Gal(k(\mu_{q})/k),\mu_{q})$. 
By \cite[Prop.~9.1.6]{NSW08}, we have $\mathrm{H}^1(\Gal(k(\mu_{q})/k),\mu_{q})=0$ unless we are in the special case where $p=2$, $r\geq 2$ and $\QQ(\mu_{2^r})\cap k$ is real. In this special case, $\mathrm{H}^1(\Gal(k(\mu_{2^r})/k),\mu_{2^r})\cong\ZZ/2\ZZ$. In particular, the kernel of the natural map $k^*/k^{*q}\to k(\mu_{q})^*/k(\mu_{q})^{*q}$ is finite, and hence the finiteness of $\mathcal{X}(k,G,\mathcal{A})$ follows from the finiteness of its image in $ k(\mu_{q})^*/k(\mu_{q})^{*q}$.

	We now show that 
	$\mathcal{X}(k,G,\mathcal{A}) \subset \OO_S^* \otimes \dual{G}$;
	the rest follows from the fact that our condition
	is $S$-frobenian (see Lemma \ref{lem:frob_set}). Let $x \in k^*$ be such that its image in $k^*/k^{*q}$ is in $\mathcal{X}(k,G,\mathcal{A})$. By the argument above, the image of $x$ in $ k(\mu_{q})^*/k(\mu_{q})^{*q}$ is in $(k(\mu_{q})^*\cap K^{*q})/k(\mu_{q})^{*q}$. In particular, $x=y^{q}$ for some $y\in K^*$.
	By our assumptions in \S\ref{sec:S} that $\mathcal{A}\subset \mathcal{O}_S^*$ and that $S$ includes all primes dividing $|G|$, the extension $K/k$ is unramified at all $v\notin S$. Therefore, for all $v\notin S$, the valuation $\ord_v(x)=\ord_v(y^{q})$ is divisible by $q$.
	Consequently, the fractional ideal $x\mathcal{O}_S$ is the $q$th power of some fractional ideal $I$ of $\mathcal{O}_S$. By our assumption in \S\ref{sec:S} that $\mathcal{O}_S$ has trivial class group, $I=z\mathcal{O}_S$ for some $z\in k^*$. Therefore, $x=uz^{q}$ for some $u\in\mathcal{O}_S^*$. This completes the proof.
\end{proof}

\begin{lemma} \label{lem:leading_singularity}
	Let $x \in \OO_S^* \otimes \dual{G}$. Then $\varpi(k,G,\mathcal{A},x) = \varpi(k,G,\mathcal{A})$ if and only if $x \in \mathcal{X}(k,G,\mathcal{A})$. 
	
	Moreover
	$\widehat{f}_{\Lambda_v,G}(x_v;1) = \widehat{f}_{\Lambda_v,G}(1;1)$
	for $x\in \mathcal{X}(k,G,\mathcal{A})$ and $v \notin S$.
\end{lemma}
\begin{proof}
	Let $x \in \mathcal{X}(k,G,\mathcal{A})$. It follows 
	from the definition, $S$-frobeniality, Corollary \ref{cor:is_frob},
	and Lemma \ref{lem:varpi_def} that
	$\varpi(k,G,\mathcal{A},x) = \varpi(k,G,\mathcal{A})$. 
	The equality of Fourier transforms follows from  Lemma \ref{lem:local_good}
	and Lemma \ref{lem:finitely_many}.

	So assume that $x \notin \mathcal{X}(k,G,\mathcal{A})$.
	Let $v \notin S$ be such that	
	$x_v\notin \mathcal{A}_v \otimes \dual{G}$. Then
	$$-1 < 	|\Hom(\FF_v^*/(\mathcal{A} \bmod v), G)| - 1$$
	as this group always contains the trivial homomorphism.
	The result now follows from the fact that the function in
	Corollary \ref{cor:is_frob} is $S$-frobenian.
\end{proof}

These lemmas show that the leading singularity comes from
finitely many terms which are \emph{independent} of $S$ and our choice of local conditions for $v \in S$. This makes applications much easier when one is varying $S$ (we require such applications for the proof of Theorem \ref{thm:HNP_rare}).

\begin{theorem} \label{thm:leading_constant}
	Retain the assumptions of Theorem \ref{thm:main} and the additional
	assumptions on the finite set of places $S$ from \S \ref{sec:S}.
	Let $\mathcal{X}(k,G,\mathcal{A})$ be as in~Lemma \ref{lem:finitely_many}
	and let $S_{\mathrm{f}}$ be the set of non-archimedean places in $S$. Write $\varpi=\varpi(k,G,\mathcal{A})$.
	Then
	\begin{align*}
	c_{k,G,\Lambda} =& \frac{(\mathrm{Res}_{s=1} \zeta_k(s))^{\varpi}}{\Gamma(\varpi)|\OO_k^* \otimes G||G|^{|S_{\mathrm{f}}|}}
	\prod_{v \notin S} \left(\sum_{\substack{
	\chi_v \in \Hom(\OO_v^*, G) \\ \mathcal{A}_v \subset \Ker \chi_v}} \frac{1}{\Phi_v(\chi_v)}\right) \zeta_{k,v}(1)^{-\varpi}	 \\
	 & \,\,\,  \times 
 	 \left(\sum_{ \substack{\chi \in \Hom\bigl(\prod_{v \in S}k_v^*,G\bigr) \\
 	 \chi_v \in \Lambda_v \forall v \in S}} \frac{1}{\prod_{v \in S}\Phi_v(\chi_v)\zeta_{k,v}(1)^{\varpi}}\sum_{x \in \mathcal{X}(k,G,\mathcal{A})} 
\prod_{v \in S} \langle \chi_v, x_v \rangle\right) ,
	\end{align*}
	where the product over $v \notin S$ is non-zero.
      \end{theorem}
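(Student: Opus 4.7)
My plan is to take the expression for $c_{k,G,\Lambda}$ from Proposition~\ref{prop:asymptotic} and expand each piece. Writing $\varpi := \varpi(k,G,\mathcal{A})$, Lemma~\ref{lem:leading_singularity} identifies the indexing set $\{x \in \OO_S^*\otimes\dual{G} : \varpi(k,G,\mathcal{A},x) = \varpi\}$ with the finite set $\mathcal{X}(k,G,\mathcal{A})$ of Lemma~\ref{lem:finitely_many}. For each such $x$, Proposition~\ref{prop:Fourier_properties} evaluates
\[
\lim_{s\to 1}(s-1)^{\varpi}\widehat{f}_{\Lambda,G}(x;s) = (\mathrm{Res}_{s=1}\zeta_k(s))^{\varpi}\prod_{v \in \Omega_k}\frac{\widehat{f}_{\Lambda_v,G}(x_v;1)}{\zeta_{k,v}(1)^{\varpi}},
\]
so the task reduces to computing each local factor $\widehat{f}_{\Lambda_v,G}(x_v;1)$ as $x$ ranges over $\mathcal{X}(k,G,\mathcal{A})$ and then rearranging.

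The crucial observation is that for $v \notin S$ the local factor is \emph{independent} of $x \in \mathcal{X}(k,G,\mathcal{A})$. Indeed, every such $x$ satisfies $x_v \in \mathcal{A}_v \otimes \dual{G}$ by Lemma~\ref{lem:finitely_many}; combining Lemma~\ref{lem:O_v_sum} with Lemma~\ref{lem:kernel} reduces the Fourier transform to a sum over those $\chi_v \in \Hom(\OO_v^*, G)$ with $\mathcal{A}_v \subset \Ker \chi_v$. On pure tensors the Pontryagin pairing reads $\langle\chi_v, \alpha\otimes\psi\rangle = \psi(\chi_v(\alpha))$, which is trivial whenever $\chi_v$ kills $\alpha \in \mathcal{A}_v$; by bilinearity it is therefore trivial on all of $\mathcal{A}_v\otimes\dual{G}$. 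Hence
\[
\widehat{f}_{\Lambda_v,G}(x_v;1) = \sum_{\substack{\chi_v\in\Hom(\OO_v^*,G)\\\mathcal{A}_v\subset\Ker\chi_v}}\frac{1}{\Phi_v(\chi_v)}
\]
for every $x \in \mathcal{X}(k,G,\mathcal{A})$, and the product over $v \notin S$ factors out of the sum over $x$.

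For $v \in S$ I would use the explicit formula from Lemma~\ref{lem:holomorphic}, which writes $\widehat{f}_{\Lambda_v,G}(x_v;1)$ as a finite sum over $\chi_v \in \Lambda_v$ with normalisation $|G|^{-1}$ at non-archimedean places and trivial normalisation at archimedean ones. Expanding the product $\prod_{v \in S}\widehat{f}_{\Lambda_v,G}(x_v;1)$ into a multi-sum indexed by tuples $\chi = (\chi_v)_{v \in S} \in \Hom(\prod_{v \in S}k_v^*, G)$ with $\chi_v \in \Lambda_v$, and then swapping this finite sum with the sum over $x \in \mathcal{X}(k,G,\mathcal{A})$, produces precisely the nested double sum in the statement; the overall $|G|^{|S_f|}$ in the denominator collects the non-archimedean normalisations.

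The final claim, non-vanishing of the product over $v \notin S$, I would deduce from the last sentence of Proposition~\ref{prop:Fourier_properties} applied with $x = 1$: that limit is non-zero, $\mathrm{Res}_{s=1}\zeta_k(s)$ is non-zero, and each local factor $\widehat{f}_{\Lambda_v,G}(1;1)$ for $v \in S$ is strictly positive by the last assertion of Lemma~\ref{lem:holomorphic}, so the remaining infinite product must be non-zero as well. I expect the principal technical point to be the triviality-of-pairing argument producing the $x$-independence at $v \notin S$, as this is what decouples the finite data at $S$ from the infinite Euler product; everything else is bookkeeping with Haar measure normalisations and reordering of finite sums.
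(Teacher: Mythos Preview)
Your proposal is correct and follows essentially the same route as the paper's proof. The only cosmetic difference is that for $v\notin S$ the paper cites Lemma~\ref{lem:local_good} directly (whose explicit formula already shows $\widehat{f}_{\Lambda_v,G}(x_v;1)=\widehat{f}_{\Lambda_v,G}(1;1)$ once $x_v\in\mathcal{A}_v\otimes\dual{G}$), whereas you rederive this $x$-independence via the pairing computation; your non-vanishing argument is in fact slightly more explicit than the paper's.
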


      \begin{proof}
        From Proposition \ref{prop:Fourier_properties}, Proposition \ref{prop:asymptotic}, and Lemma \ref{lem:leading_singularity},
        we get the leading constant
	$$c_{k,G,\Lambda} = \frac{(\mathrm{Res}_{s=1}\zeta_k(s))^{\varpi}}{\Gamma(\varpi)|\OO_k^* \otimes G|}
	\sum_{x \in \mathcal{X}(k,G,\mathcal{A})}\prod_{v}\frac{\widehat{f}_{\Lambda_v,G}(x_v;1)}{\zeta_{k,v}(1)^{\varpi}}.$$
We have 
	$\widehat{f}_{\Lambda_v,G}(x_v;1) = \widehat{f}_{\Lambda_v,G}(1;1)$
	for $x\in \mathcal{X}(k,G,\mathcal{A})$ and $v \notin S$ by Lemma \ref{lem:leading_singularity},
	and these
	factors are non-zero by Lemma \ref{lem:holomorphic}.
	The explicit expressions for $v \notin S$ follow from Lemma \ref{lem:O_v_sum}.
	For $v \in S$, we simply apply
	directly the definition of the local Fourier transforms
	from \S\ref{sec:Fourier_def} (see \eqref{eq:localfourier} for a formula in the non-archimedean case) and change the order of summation.
\end{proof}

Note that the expression for $c_{k,G,\Lambda}$ is independent of $S$, for any $S$ which satisfies the assumptions of \S \ref{sec:S}.

\begin{remark}
	In the special case $\mathcal{A} = \{1\}$, our constant  agrees with
	the constant which Wood obtains in \cite[Thm.~3.1]{Woo10},
	up to the factor $(\mathrm{Res}_{s=1} \zeta_k(s))^{\varpi(k,G,\mathcal{A})}$.
	This factor is missing from Wood's paper: in the proof of \cite[Thm.~3.1]{Woo10}, she mistakenly uses the equality $\lim_{s \to 1}(s-1)\zeta_K(s) = 1$, which holds for $K=\mathbb{Q}$ but does not hold in general (the residue is given by the analytic class number formula). Thus the right-hand side of \cite[Thm.~3.1]{Woo10} should contain an additional factor of $(\mathrm{Res}_{s=1} \zeta_K(s))^{w_{K,C}}$.	
\end{remark}

\begin{remark} Let $\widehat{G}=\Hom(\dual{G},\mathbb{G}_m) $ denote the Cartier dual of $\dual{G}$. Then $\Sha(k, \widehat{G})=\Ker(k^*\otimes\dual{G}\to\Adele^*\otimes\dual{G})$.
	An examination of the proof of Lemma~\ref{lem:finitely_many} gives
	the bounds
	$$|\Sha(k, \widehat{G}) \cdot ( \mathcal{A} \otimes \dual{G})|
	\leq |\mathcal{X}(k,G,\mathcal{A})| \leq 
	|2\dual{G}/4\dual{G}| |\mathcal{A} \otimes \dual{G}|.$$
	The following examples show that either bound can be sharp.
	
	For the lower bound, take $k=\QQ$,
	$\mathcal{A} = \{ 1\}$ and $\dual{G} = \ZZ/4\ZZ$. Then $\mathcal{A}\otimes\dual{G}=1$, $\cX(k,G,\{1\})=\Sha(k,\widehat{G})=0$, and $|2\dual{G}/4\dual{G}|=2$.
	
	For the upper bound, take $k=\QQ$,
	$\mathcal{A} = \{\pm 1\}$ and $\dual{G} = \ZZ/4\ZZ$.
	Then one checks that $\mathcal{X}(\QQ,\ZZ/4\ZZ,\{ \pm 1\}) = \langle \pm 1,
	\pm 4\rangle$, despite the fact that $\Sha(\QQ, \mu_4) = 0$.
	
	 An example where both bounds coincide is given by taking $\mathcal{A} =\{1\}$ and $\dual{G}=\ZZ/2\ZZ$. One easily sees that in this case $\mathcal{X}(k,G,\{1\})$ is trivial.
\end{remark}

\subsection{Positivity of the leading constant}
To finish the proof of Theorem~\ref{thm:main}, we need to show that $c_{k,G,\Lambda} > 0$ if there exists some sub-$G$-extension which realises all the given local conditions. It suffices to consider the contributions from $v \in S$ to the explicit expression given in Theorem \ref{thm:leading_constant}, as the factors at $v \notin S$ are clearly non-zero.
By character orthogonality we have
$$\sum_{x \in \mathcal{X}(k,G,\mathcal{A})} \prod_{v \in S} 
	\langle \chi_v, x_v \rangle = 
\begin{cases}
	|\mathcal{X}(k,G,\mathcal{A})| & \text{if }\prod_{v \in S} \chi_v \text{ is trivial on } \mathcal{X}(k,G,\mathcal{A}), \\
	0 & \text{otherwise}.
\end{cases}$$
In particular, this sum is non-negative for all $\chi\in \Hom(\prod_{v \in S}k_v^*,G)$. Hence, it suffices to show the existence of some $\chi$ such that this sum is non-zero. However, we have assumed the existence of a sub-$G$-extension $\varphi$ which realises all the local conditions. Let $\psi: \Adele^*/k^* \to G$ be the associated homomorphism coming from class field theory. Note that $\prod_v \langle \psi_v, x_v \rangle = 1$ for all $x \in k^* \otimes \dual{G}$, hence
$$\prod_{v \in S} \langle \psi_v, x_v \rangle = \prod_{v \notin S} 
\frac{1}{\langle \psi_v, x_v \rangle}.$$
It therefore suffices to show that 
\begin{equation} \label{eqn:psi_x}
	\langle \psi_v, x_v \rangle = 1 \quad \text{ for all } v \notin S
	\text{ and all } x \in \mathcal{X}(k,G,\mathcal{A}).
\end{equation}
However, for $x \in \mathcal{X}(k,G,\mathcal{A})$ we have
$x_v \in \mathcal{A}_v \otimes \dual{G}$
for all $v \notin S$, by Lemma~\ref{lem:finitely_many}. Moreover,
by assumption every element of $\mathcal{A}$ is a local norm from $K_\varphi$
for all $v \notin S$, thus 
$\mathcal{A}_v \subset \Ker \psi_v$ for all $v \notin S$ by Lemma \ref{lem:kernel}.
The claim \eqref{eqn:psi_x} follows, which completes the proof of Theorem \ref{thm:main}. \qed

\section{Proof of results} \label{sec:proofs}

We now apply Theorem \ref{thm:main} in various ways to prove the results from the introduction.

\subsection{Asymptotic formula for everywhere local norms}
We first derive an asymptotic formula for $N_{\mathrm{loc}}(k,G,\mathcal{A},B)$ (see \eqref{def:counting_functions}) using Theorem \ref{thm:main}.

\begin{theorem} \label{thm:loc}
	We have
	$$
	N_{\mathrm{loc}}(k,G,\mathcal{A},B) = c_{k,G,\mathcal{A},\mathrm{loc}}B(\log B)^{\varpi(k,G,\mathcal{A}) -1}
	+  O(B(\log B)^{\varpi(k,G,\mathcal{A})-1-\delta})$$
	as $B\to\infty,$
	for some $ c_{k,G,\mathcal{A},\mathrm{loc}} > 0$ and some $\delta=\delta(k,G,\mathcal{A}) >0$.
\end{theorem}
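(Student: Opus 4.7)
The plan is to deduce this directly from Theorem \ref{thm:main} by choosing the local conditions $\Lambda_v$ so that $N(k,G,\Lambda,B)$ recovers $N_{\mathrm{loc}}(k,G,\mathcal{A},B)$. Concretely, I would first enlarge $S$ if necessary so that it satisfies the assumptions of \S\ref{sec:S} (archimedean places, primes dividing $|G|$, $\mathcal{A}\subset \mathcal{O}_S^*$, trivial $S$-class group). Then, for each $v \in S$, I define $\Lambda_v$ to be the set of sub-$G$-extensions $\varphi_v$ of $k_v$ for which every element of $\mathcal{A}$ is a local norm from the corresponding extension $K_{\varphi_v}/k_v$. For $v \notin S$, I take $\Lambda_v$ exactly as specified in Theorem \ref{thm:main}.

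With this choice, a $G$-extension $\varphi \in \gextk$ satisfies $\varphi_v \in \Lambda_v$ for every place $v$ if and only if every element of $\mathcal{A}$ is a local norm from $K_\varphi$ at every place, which is precisely the condition $\mathcal{A} \subset N_{K_\varphi/k}\Adele_{K_\varphi}^*$ appearing in the definition of $N_{\mathrm{loc}}(k,G,\mathcal{A},B)$. Hence $N_{\mathrm{loc}}(k,G,\mathcal{A},B) = N(k,G,\Lambda,B)$, and Theorem \ref{thm:main} immediately yields the asymptotic with exponent $\varpi(k,G,\mathcal{A})-1$, for some error savings $\delta > 0$ depending only on $k,G,\mathcal{A}$.

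The remaining point is positivity of the leading constant $c_{k,G,\mathcal{A},\mathrm{loc}}$. By the positivity criterion in Theorem \ref{thm:main}, it suffices to exhibit a single sub-$G$-extension of $k$ realising all the chosen local conditions. The trivial homomorphism $\varphi \colon \Gal(\bar{k}/k) \to G$ works: it corresponds at every place $v$ to the trivial extension $k_v/k_v$, from which every element of $k_v^*$ is trivially a local norm, so $\varphi_v \in \Lambda_v$ for every $v$ (including $v \in S$, since the $\Lambda_v$ there were defined precisely by the local norm condition). This verifies the hypothesis and gives $c_{k,G,\mathcal{A},\mathrm{loc}} > 0$.

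I do not anticipate any real obstacle: all the analytic work (Poisson summation, the frobenian Selberg--Delange argument, the identification of the leading-order terms via $\mathcal{X}(k,G,\mathcal{A})$, and the orthogonality argument for positivity) is already packaged into Theorem \ref{thm:main}. The only thing to check carefully is that the $\Lambda_v$ for $v\in S$ are nonempty and that the trivial sub-$G$-extension lies in every $\Lambda_v$, both of which are essentially tautological from the definitions.
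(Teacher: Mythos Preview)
Your proposal is correct and follows essentially the same approach as the paper: define $\Lambda_v$ for all $v$ by the local norm condition so that $N_{\mathrm{loc}}(k,G,\mathcal{A},B)=N(k,G,\Lambda,B)$, apply Theorem~\ref{thm:main}, and use the trivial sub-$G$-extension $k/k$ to verify positivity. The only cosmetic difference is that you explicitly enlarge $S$ to satisfy the hypotheses of \S\ref{sec:S}, whereas the paper does not, since Theorem~\ref{thm:main} is stated for arbitrary finite $S$ and the enlargement is handled internally in its proof; this is harmless.
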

\begin{proof}
	For all $v\in\Omega_k$, let $\Lambda_v$ be the set of sub-$G$-extensions of 
	$k_v$ corresponding	to those extensions $L/k_v$ for which 
	every element of $\mathcal{A}$ 
	is a local norm from $L/k_v$. Thus, in this setting $\Lambda=(\Lambda_v)_{v\in\Omega_k}$ is determined by $\cA$. We clearly have
	$N_{\mathrm{loc}}(k,G,\mathcal{A},B) = N(k,G,\Lambda,B)$.
	It therefore suffices to show that the leading constant in 
	Theorem \ref{thm:main} is positive. To do so,
	we need to exhibit some \emph{sub}-$G$-extension
	of $k$ for which every element of $\mathcal{A}$
	is everywhere locally a norm. However, the trivial
	extension $k/k$  is such an extension.
\end{proof}

\subsection{Proof of Theorem \ref{thm:HNP_rare}}
As cyclic extensions always satisfy the Hasse norm principle, we may assume that $G$ is non-cyclic.
We use the following criterion for failure of the Hasse norm principle in the abelian setting, which was originally pointed out to us by Melanie Matchett Wood. (We use the notation from \S\ref{sec:statement_main}.)



\begin{proposition} \label{prop:HNP}
	Let $\varphi$ be a $G$-extension of $k$. Then $\varphi$
	fails the Hasse norm principle if and only if
	there exists a proper subgroup $\Upsilon \subset \wedge^2 (G)$ that contains the image of the natural map
	$$\prod_v\wedge^2(\Im \varphi_v)\to \wedge^2(G).$$
\end{proposition}

\begin{proof}
	Let $K$ be the number field determined by $\varphi$.
	Recall that the failure of the Hasse norm principle is measured by the
	Tate--Shafarevich group
	$$ \Sha(k,\Res_{K/k}^1\Gm) := \Ker\Bigl( \HH^1(k,\Res_{K/k}^1\Gm) \to 
	\prod_{v}\HH^1(k_v,\Res_{K/k}^1\Gm)\Bigr),$$
	where $\Res_{K/k}^1\Gm$ denotes the associated norm $1$ torus, see \cite[\S 6.3]{PR94}.
	This group is finite by \cite[Prop.~6.9]{PR94}. As $K/k$ is Galois, a theorem of Tate \cite[Thm.~6.11]{PR94} (see also \cite[Ex.~5.6]{San81}) implies that there
	is an exact sequence
	$$0 \to \Hom(\Sha(k,\Res_{K/k}^1\Gm), \QQ/\ZZ) \to \HH^3(G, \ZZ) \to 
	\prod_{v } \HH^3(\Im \varphi_v,\ZZ).$$
	However, as $G$ is abelian, we have a well-known canonical isomorphism
	$$\HH^3(G,\ZZ) \cong \Hom(\wedge^2(G), \QQ/\ZZ)$$
	(see e.g.~\cite[Lem.~6.4]{HNP}). Using this and applying $\Hom(\cdot,\QQ/\ZZ)$,
	we therefore obtain the exact sequence
	\begin{equation} \label{eqn:Sha}
	\prod_{v} \wedge^2(\Im \varphi_v) \to  \wedge^2(G) 
	\to \Sha(k,\Res_{k_\varphi/k}^1\Gm) \to 0.
	\end{equation}
	Thus, failure of the Hasse norm principle is equivalent 
	to the first map in \eqref{eqn:Sha} failing to be surjective. 
\end{proof}

Therefore, to prove Theorem \ref{thm:HNP_rare}, it suffices to show the following.

\begin{theorem} \label{thm:Upsilon}
	Let $\Upsilon \subset \wedge^2(G)$ be a proper subgroup. Then
	$$
	\lim_{B \to \infty}
	\frac{\#\left\{ \varphi \in \gextk \,: 
	\Phi(\varphi) \leq B, \mathcal{A} \subset \Norm_{K_\varphi/k} \Adele_{K_\varphi}^*, 
	\wedge^2(\Im \varphi_v)\subset \Upsilon \, \forall v 
	\right\}}{N_{\mathrm{loc}}(k,G,\mathcal{A},B)}  =0.
	$$
\end{theorem}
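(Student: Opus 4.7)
The plan is to adapt the harmonic analysis of Section \ref{sec:main} to the modified local conditions, and then compare the resulting exponent of $\log B$ to that of the denominator given by Theorem \ref{thm:loc}.

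Set $\mathcal{H}_\Upsilon := \{H \subset G : \wedge^2 H \subset \Upsilon\}$, so the hypothesis $\wedge^2(\Im \varphi_v) \subset \Upsilon$ becomes $\Im \varphi_v \in \mathcal{H}_\Upsilon$. Since $\Upsilon \subsetneq \wedge^2 G$, we have $G \notin \mathcal{H}_\Upsilon$; in particular $G$ is non-cyclic. Moreover $\mathcal{H}_\Upsilon$ contains every cyclic subgroup, so the condition at $v\notin S$ is automatic when $\varphi_v$ is unramified. By Theorem \ref{thm:loc}, the denominator satisfies $N_{\mathrm{loc}}(k,G,\mathcal{A},B) \sim c B(\log B)^{\varpi-1}$ with $c>0$ and $\varpi = \varpi(k,G,\mathcal{A})$; so it suffices to bound the numerator $N'(B)$ by $O(B(\log B)^{\varpi - 1 - \delta})$ for some $\delta>0$.

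Let $f_{\Lambda'}$ be the indicator function of the modified local conditions. Apply M\"obius inversion (Lemma \ref{lem:mobius}) and the Poisson formula (Proposition \ref{prop:Poisson}); the proof of the latter does not use the $\Hom(k_v^*/\OO_v^*,H)$-invariance of Lemma \ref{lem:invariance}, so it applies to $f_{\Lambda'}$. This writes the associated Dirichlet series as
\[
F_{\Lambda'}(s) = \sum_{H\subset G}\frac{\mu(G/H)}{|\OO_k^*\otimes\dual{H}|}\sum_{x\in\OO_S^*\otimes\dual{H}}\widehat{g}_H(x;s),
\]
with local Fourier transforms $\widehat{g}_{v,H}(x_v;s)$ computed as follows. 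At $v\notin S$, parameterize $\Hom(k_v^*,H) \cong H\times H[\gcd(q_v-1,e_H)]$ by $\chi_v \leftrightarrow (h_1,h_2)$ with $h_1=\chi_v(\pi_v)$ and $h_2$ the image of $\chi_v|_{\mathbb{F}_v^*}$, so that the image of $\chi_v$ in $G$ is the subgroup generated by $h_1,h_2$. For $x\in\OO_S^*\otimes\dual{H}$ a direct computation gives
\[
\widehat{g}_{v,H}(x_v;s) = 1 + a_H(v,x)\,q_v^{-s} + O(q_v^{-2s}),
\qquad a_H(v,x) = \tfrac{1}{|H|}\sum_{h_2\in H[d_v]\smallsetminus\{0\}} c_H(h_2)\,\lambda_v(h_2),
\]
where $c_H(h_2) = \#\{h_1\in H : \text{the subgroup generated by }h_1,h_2 \text{ lies in } \mathcal{H}_\Upsilon\}$, $d_v=d_{\mathcal{A},H}(v)$, and $\lambda_v(h_2)$ is the pairing value.

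Corollary \ref{cor:is_frob} and Proposition \ref{prop:frob_zeta} show that $v\mapsto a_H(v,x)$ is frobenian and that $\widehat{g}_H(x;s)=\zeta_k(s)^{\varpi'_{H,x}}G_{H,x}(s)$ with $G_{H,x}$ holomorphic in a standard zero-free region, where $\varpi'_{H,x}$ is the mean of $a_H(\cdot,x)$. The crux is to show $\varpi'_{H,x} < \varpi$ for every $H$ and $x$. For proper $H\subsetneq G$ this is immediate: $\varpi'_{H,x}\leq\varpi(k,H,\mathcal{A})<\varpi$ by Lemma \ref{lem:H<G}. For $H=G$, since $|\lambda_v(h_2)|\leq 1$ one has $\varpi'_{G,x}\leq $ mean of $a_G(\cdot,1)$, and the explicit computation analogous to Lemma \ref{lem:varpi_def} gives $\text{mean}(a_G(\cdot,1))=\frac{1}{|G|}\sum_{h\in G\smallsetminus\{0\}}c_G(h)/[k_{|h|}:k]$, to be compared with $\varpi=\sum_{h\in G\smallsetminus\{0\}}1/[k_{|h|}:k]$. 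Strict inequality follows from the key fact that $c_G(h)<|G|$ for some $h\neq 0$: otherwise every two-generated subgroup of $G$ would lie in $\mathcal{H}_\Upsilon$, and since $\wedge^2 G$ is generated by the wedge squares of such subgroups, we would obtain $\wedge^2 G\subset\Upsilon$, contradicting $\Upsilon\subsetneq\wedge^2 G$.

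Applying the Selberg--Delange machinery as in Lemma \ref{lem:selberg_delange} to each summand and taking the maximum over the finitely many pairs $(H,x)$ yields $N'(B)=O(B(\log B)^{\varpi-1-\delta})$ for some $\delta>0$, and dividing by $N_{\mathrm{loc}}(k,G,\mathcal{A},B)$ gives the result. The main obstacle is threefold: verifying that Proposition \ref{prop:Poisson} indeed applies to our non-invariant $f_{\Lambda'}$, identifying the frobenian structure of $a_H(\cdot,x)$ carefully (the condition couples $h_1$ and $h_2$), and extracting the strict inequality from the generation of $\wedge^2 G$ by rank-$\leq 2$ subgroups.
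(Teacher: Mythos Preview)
Your approach is genuinely different from the paper's, and it has a real gap at the Poisson step.

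\textbf{The gap.} You assert that Proposition~\ref{prop:Poisson} applies to $f_{\Lambda'}$ because ``the proof does not use the $\Hom(k_v^*/\OO_v^*,H)$-invariance of Lemma~\ref{lem:invariance}''. This is false: the very first displayed equation in the proof of Proposition~\ref{prop:Poisson} invokes Lemma~\ref{lem:O_v_sum}, whose proof rests on that invariance. And your $f_{\Lambda'_v}$ is \emph{not} invariant under unramified twists: if $\chi_v\leftrightarrow(h_1,h_2)$ in your parametrisation and $\psi_v$ is unramified with $\psi_v(\pi_v)=h_1'$, then $\Im(\psi_v\chi_v)=\langle h_1+h_1',h_2\rangle$, which need not lie in $\mathcal{H}_\Upsilon$ when $\langle h_1,h_2\rangle$ does. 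Without invariance, the local Fourier transform at $v\notin S$ does not reduce to a sum over $\Hom(\OO_v^*,H)$, so the global Fourier transform does not collapse to a sum over $\Hom(\Adele_S^*,H)$, and the right-hand side of the Poisson identity becomes a sum over the \emph{infinite} group $k^*\otimes\dual{H}$ rather than the finite group $\OO_S^*\otimes\dual{H}$. Your local computation of $\widehat g_{v,H}(x_v;s)$ for $x_v\in\OO_v^*\otimes\dual{H}$ is correct, but you also need to control the (nonzero) local factors at places where $x_v\notin\OO_v^*\otimes\dual{H}$ and then sum over infinitely many $x$; none of this is addressed.

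\textbf{How the paper avoids this.} Rather than impose the $\Upsilon$-condition at all places simultaneously, the paper truncates: fix $S_0$ as in \S\ref{sec:S}, take a finite set $T$ disjoint from $S_0$, and impose the $\Upsilon$-condition \emph{only} at $v\in T$, treating those places as part of $S=S_0\cup T$. Now Theorem~\ref{thm:main} applies directly (it allows arbitrary conditions at the finitely many places in $S$), yielding
\[
\lim_{B\to\infty}\frac{N(k,G,\Lambda_T,B)}{N_{\mathrm{loc}}(k,G,\mathcal{A},B)}=\frac{c_{k,G,\Lambda_T}}{c_{k,G,\mathcal{A},\mathrm{loc}}}.
\]
The explicit constant of Theorem~\ref{thm:leading_constant} then lets one bound this ratio by a product over $v\in T$ completely split in $k_e=k(\mu_e,\sqrt[e]{\mathcal{A}})$, each factor being at most $1-\tfrac{1}{|G|q_v}+O(q_v^{-2})$ (using Lemma~\ref{lem:G_i} to produce, at each such $v$, a ramified $\chi_v$ with $\wedge^2(\Im\chi_v)\not\subset\Upsilon$). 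Letting $T$ exhaust $\Omega_k\smallsetminus S_0$, Chebotarev forces the product to $0$. Your key combinatorial observation (that $\wedge^2 G$ is generated by the $\wedge^2$ of its two-generated subgroups) is exactly what underlies Lemma~\ref{lem:G_i}, so the two arguments share the same algebraic core; the difference is that the paper packages the analysis so that the non-invariant condition lives only at finitely many places, where it causes no trouble.
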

 Note that in Theorem~\ref{thm:Upsilon}, and henceforth, we abuse notation by writing $\wedge^2(\Im \varphi_v)\subset \Upsilon$ to mean that the image of the natural map $\wedge^2(\Im \varphi_v)\to \wedge^2(G)$ is contained in $\Upsilon$, despite the fact that this map is not injective in general.

We prove Theorem~\ref{thm:Upsilon} via an application of Theorem \ref{thm:main}. Note, however, that one cannot
apply Theorem \ref{thm:main} directly, as the local conditions imposed at the infinitely many places will not be compatible with the assumptions of Theorem \ref{thm:main}. We therefore apply  Theorem \ref{thm:main} to a suitable finite set of places, which we then allow to increase.

\subsubsection{Proof of Theorem \ref{thm:Upsilon}}
Let $S_0$ be a finite set of places of $k$ satisfying the conditions of \S \ref{sec:S}, which we consider as being fixed.
Let $T$ be a finite set of places of $k$ which is disjoint from $S_0$. Eventually, we will consider what happens as $T$ increases. Let $S=S_0\cup T$.

We consider the local conditions $\Lambda_v$ given by
\begin{align*}
\{\varphi_v\in\Hom(\Gal(\bar{k}_v/{k_v}), G) : \mathcal{A}_v \subset \Norm_{K_{\varphi_v}/k_v}(K_{\varphi_v}^*)\}, &\ v\notin T;\\
\{\varphi_v\in\Hom(\Gal(\bar{k}_v/{k_v}),G) : \mathcal{A}_v \subset \Norm_{K_{\varphi_v}/k_v}(K_{\varphi_v}^*),
\wedge^2(\Im \varphi_v)\subset \Upsilon\}, & \ v\in T.
\end{align*}
We denote the collection of such conditions by $\Lambda_T$.
Note that we clearly have
$$	\frac{\#\left\{ \varphi \in \gextk \,: 
\begin{array}{ll}
	&\Phi(\varphi) \leq B, \mathcal{A} \subset \Norm_{K_\varphi/k} \Adele_{K_\varphi}^*, \\
	&\wedge^2(\Im \varphi_v)\subset \Upsilon \, \forall v 
\end{array}
	\right\}}{N_{\mathrm{loc}}(k,G,\mathcal{A},B)} 
\leq \frac{N(k,G,\Lambda_T,B)}{N_{\mathrm{loc}}(k,G,\mathcal{A},B)}$$
for all $B$.
Applying Theorem \ref{thm:main} gives
$$
	\lim_{B \to \infty}\frac{N(k,G,\Lambda_T,B)}{N_{\mathrm{loc}}(k,G,\mathcal{A},B)}
	=\frac{c_{k,G,\Lambda_T}}{c_{k,G,\mathcal{A},\mathrm{loc}}}
$$
where $c_{k,G,\mathcal{A},\mathrm{loc}}>0$ by Theorem \ref{thm:loc}.
To prove Theorem \ref{thm:Upsilon} it therefore suffices to show that
\begin{equation} \label{eqn:lim_equals_0}
\lim_{S_0 \cup T \to \Val}\frac{c_{k,G,\Lambda_T}}{c_{k,G,\mathcal{A},\mathrm{loc}}} = 0
\end{equation}
where as explained we consider $S_0$ as fixed and $T$ as increasing and disjoint from $S_0$.
We do this using the explicit expression for the leading constant
given in Theorem \ref{thm:leading_constant}. We let $e$ be the exponent of $G$.  We require the following elementary observation.

\begin{lemma}	\label{lem:Rachel}
	Let $\alpha \in k^{*}$. If $v$ is such that $\alpha \in k_v^{*e}$, then
	$\alpha$ is a local norm at $v$ from every sub-$G$-extension of $k$.
\end{lemma}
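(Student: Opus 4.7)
The plan is to translate the statement into class field theory and then invoke a one-line group-theoretic observation. First I would recall, as in Lemma \ref{lem:kernel} and the set-up of \S\ref{sec:main}, that via the local Artin map a sub-$G$-extension of $k_v$ corresponds bijectively to a continuous homomorphism $\chi_v : k_v^* \to G$, and that an element $\alpha \in k_v^*$ is a local norm from the associated extension $K_{\varphi_v}/k_v$ if and only if $\alpha \in \ker \chi_v$.

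Next I would use the hypothesis that $G$ has exponent $e$: for every homomorphism $\chi_v : k_v^* \to G$ and every $\beta \in k_v^*$, one has $\chi_v(\beta^e) = \chi_v(\beta)^e = \id_G$. Therefore $k_v^{*e} \subset \ker \chi_v$ for every sub-$G$-extension. Combining this with the hypothesis $\alpha \in k_v^{*e}$ yields $\alpha \in \ker \chi_v$, which by the previous paragraph gives that $\alpha$ is a local norm at $v$ from the corresponding sub-$G$-extension of $k$. Since $\chi_v$ was arbitrary, the same conclusion holds for every sub-$G$-extension.

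There is essentially no obstacle here: the statement is a direct consequence of the naturality of the local reciprocity map together with the fact that any homomorphism into a group of exponent $e$ kills all $e$th powers. The one small point of care is to remember that a sub-$G$-extension need not be surjective onto $G$, but this causes no issue since the containment $k_v^{*e} \subset \ker \chi_v$ is immediate from $G$ having exponent $e$, independently of the image of $\chi_v$.
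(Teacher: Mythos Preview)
Your proof is correct and essentially the same as the paper's. Both arguments use local class field theory to identify the norm group with the kernel of the local Artin homomorphism (equivalently, $k_v^*/\Norm_{K_v/k_v}K_v^* \cong \Gal(K_v/k_v) \hookrightarrow G$), and then observe that since $G$ has exponent $e$, every $e$th power lies in this kernel.
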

\begin{proof}
	Let $K$ be an extension of $k$ with Galois group isomorphic to a subgroup of $G$
	and $v$ a place of $k$ such that $\alpha \in k_v^{*e}$.
	Let $K_v$ be the
	completion of $k$ at a choice of place of $K$ above $v$. Then local class field
	theory yields 
	$$k_v^*/\Norm_{K_v/k_v}K_v^*\cong \Gal(K_v/k_v)\hookrightarrow G.$$
	Now $G$ has exponent $e$, whereby the group $k_v^*/\Norm_{K_v/k_v}K_v^*$
	has exponent dividing $e$. It follows that an $e$th power in $k_v^*$ is a local norm.
\end{proof}

We now obtain the following bounds.

\begin{lemma} \label{lem:constant_calc}
  Let $k_e = k(\mu_e, \sqrt[e]{\mathcal{A}})$. Then 
	$$\frac{c_{k,G,\Lambda_T}}{c_{k,G,\mathcal{A},\mathrm{loc}}}
	\leq \prod_{\substack{v \in T \\ v \text{ completely
	split in }k_e/k}} \frac{\sum_{\substack{\chi_v \in \Hom(k_v^*, G) \\ \wedge^2(\im\chi_v)\subset \Upsilon}} \frac{1}{\Phi_v(\chi_v)}}
	{\sum_{\substack{\chi_v \in \Hom(k_v^*, G)}} \frac{1}{\Phi_v(\chi_v)}}. $$
\end{lemma}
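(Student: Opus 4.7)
The plan is to apply Theorem~\ref{thm:leading_constant} with the common set of places $S=S_0\sqcup T$ to both $c_{k,G,\Lambda_T}$ and $c_{k,G,\mathcal{A},\mathrm{loc}}$. The prefactor $(\mathrm{Res}_{s=1}\zeta_k(s))^{\varpi(k,G,\mathcal{A})}/(\Gamma(\varpi(k,G,\mathcal{A}))|\OO_k^*\otimes G||G|^{|S_f|})$ and the Euler product over $v\notin S$ will be identical in the two expressions, because the local factor at $v\notin S$ depends only on the condition $\mathcal{A}_v\subset\Ker\chi_v$, which is imposed in both setups. These identical factors cancel on taking the ratio, leaving a comparison of character sums at places in $S$.

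The key step will be a simultaneous collapse at places $v\in T$ that split completely in $k_e/k$. At such a $v$, splitting gives $\mu_e\subset k_v^*$ and $\mathcal{A}_v\subset k_v^{*e}$. Since $G$ has exponent $e$, the inclusion $k_v^{*e}\subset\Ker\chi_v$ holds automatically for every $\chi_v\in\Hom(k_v^*,G)$, so the local norm condition becomes vacuous; hence $\Lambda^{\mathrm{loc}}_v=\Hom(k_v^*,G)$ while $\Lambda_{T,v}$ is cut out by the $\wedge^2$-condition alone. Simultaneously, applying Lemma~\ref{lem:finitely_many} with the smaller set $S_0$ (which satisfies the assumptions of \S\ref{sec:S}) gives $x_v\in\mathcal{A}_v\otimes\dual{G}$ for every $x\in\mathcal{X}(k,G,\mathcal{A})$ and every $v\notin S_0$; since $\dual{G}$ has exponent $e$, the inclusion $\mathcal{A}_v\subset k_v^{*e}$ forces $\mathcal{A}_v\otimes\dual{G}=0$ in $k_v^*\otimes\dual{G}$. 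Consequently $x_v=0$ and $\langle\chi_v,x_v\rangle=1$ for every $\chi_v$ and every $x\in\mathcal{X}(k,G,\mathcal{A})$.

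Together these two facts imply that at every split $v\in T$ the factor depending on $\chi_v$ decouples entirely from the sum over $x$ and from the other characters. Factoring out the decoupled sums yields
\[\frac{c_{k,G,\Lambda_T}}{c_{k,G,\mathcal{A},\mathrm{loc}}}=\frac{R_T}{R'_T}\prod_{\substack{v\in T\\ v\text{ split in }k_e/k}}\frac{\sum_{\chi_v\in\Lambda_{T,v}}1/\Phi_v(\chi_v)}{\sum_{\chi_v\in\Hom(k_v^*,G)}1/\Phi_v(\chi_v)},\]
where $R_T$ and $R'_T$ are the residual character sums for $\Lambda_T$ and $\Lambda^{\mathrm{loc}}$ over characters at $S_0$ and at non-split places of $T$. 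By character orthogonality each summand in these residual sums equals $|\mathcal{X}(k,G,\mathcal{A})|$ or $0$ and is in particular non-negative; since $\Lambda_{T,v}\subset\Lambda^{\mathrm{loc}}_v$ at non-split $v\in T$ and the two conditions coincide at $v\in S_0$, we obtain $R_T\leq R'_T$, from which the stated inequality follows. The only real conceptual point is this simultaneous vanishing at split places; it hinges on using the characterization of $\mathcal{X}(k,G,\mathcal{A})$ with respect to $S_0$ rather than the larger set $S_0\sqcup T$, and everything else is careful bookkeeping inside Theorem~\ref{thm:leading_constant}.
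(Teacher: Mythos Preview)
Your argument is correct, and it rests on the same two pillars as the paper's proof: the explicit leading-constant formula of Theorem~\ref{thm:leading_constant} and the description of $\mathcal{X}(k,G,\mathcal{A})$ from Lemma~\ref{lem:finitely_many} applied with the smaller set $S_0$. The difference lies in how much of $T$ you decouple.

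You only split off the factors at the places of $T$ that are completely split in $k_e/k$, using the strong fact that $x_v=0$ there; at the remaining (non-split) places of $T$ you fall back on positivity (each inner sum over $\mathcal{X}$ is $0$ or $|\mathcal{X}|$) together with the inclusion $\Lambda_{T,v}\subset\Lambda^{\mathrm{loc}}_v$ to get $R_T\le R'_T$. The paper instead observes that $\langle\chi_v,x_v\rangle=1$ at \emph{every} $v\in T$, not just the split ones: for $\chi_v\in\Lambda_v$ one has $\mathcal{A}_v\subset\Ker\chi_v$ by definition of $\Lambda_v$, while $x_v$ lies in the image of $\mathcal{A}_v\otimes\dual{G}$ by Lemma~\ref{lem:finitely_many}, and the pairing annihilates $\Ker\chi_v\otimes\dual{G}$. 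This lets the paper factor out \emph{all} of $T$ and obtain an exact equality
\[
\frac{c_{k,G,\Lambda_T}}{c_{k,G,\mathcal{A},\mathrm{loc}}}=\prod_{v\in T}\frac{\sum_{\chi_v\in\Lambda_{T,v}}\Phi_v(\chi_v)^{-1}}{\sum_{\substack{\chi_v\in\Hom(k_v^*,G)\\\mathcal{A}_v\subset\Ker\chi_v}}\Phi_v(\chi_v)^{-1}},
\]
before discarding the (at most $1$) factors at non-split places and simplifying the denominator at split places via Lemma~\ref{lem:Rachel}. Your route reaches the same inequality but forfeits this intermediate equality; the paper's version is tidier and gives a bit more information.
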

\begin{proof}
	The factors in Theorem \ref{thm:leading_constant}
	cancel out in the quotient 
	$c_{k,G,\Lambda_T}/c_{k,G,\mathcal{A},\mathrm{loc}}$,
	except those at places $v \in S$.
	By Lemma~\ref{lem:finitely_many}, we have
	\[\mathcal{X}(k,G,\mathcal{A}) = \{ x \in \OO_{S_0}^* \otimes \dual{G} :
	x_v \in \mathcal{A}_v \otimes \dual{G} \text{ for all } v \notin S_0\}\]
	(this statement holds for any set of places satisfying the assumptions of \S \ref{sec:S}).
	Moreover, for $v \in T$ any element of $\mathcal{A}_v$
	is a local norm at $v$ by our choice of $\Lambda_v$;
	it follows that $\langle \chi_v, x_v \rangle = 1$ for $\chi_v \in \Lambda_v$
	as in Theorem \ref{thm:leading_constant}, hence
	$$	\sum_{x \in \mathcal{X}(k,G,\mathcal{A})} 
	\prod_{v \in S} \langle \chi_v, x_v \rangle
	= \sum_{x \in \mathcal{X}(k,G,\mathcal{A})} 
\prod_{v \in S_0} \langle \chi_v, x_v \rangle.$$
        Therefore, we can split off Euler factors for all $v\in T$ from the term involving $S$, while the remaining sum over $\Hom(\prod_{v\in S_0}k_v^*,G)$ is the same in $c_{k,G,\Lambda_T}$ and $c_{k,G,\mathcal{A},\mathrm{loc}}$. We have obtained the equality
        \begin{equation*}
          \frac{c_{k,G,\Lambda_T}}{c_{k,G,\mathcal{A},\mathrm{loc}}}=\prod_{v\in T} \frac{\sum_{\chi_v\in\Lambda_v}\frac{1}{\Phi_v(\chi_v)}}{\sum_{\substack{\chi_v\in\Hom(k_v^*,G)\\\mathcal{A}_v\subset\Ker\chi_v}}\frac{1}{\Phi_v(\chi_v)}}.
          \end{equation*}
	The quotient of each local factor is at most $1$,
	so to obtain an upper bound we may just consider
	those places $v \in T$ which are completely split in $k_e/k$.
	For such places every element of $\mathcal{A}$
	is an $e$th power in $k_v^*$, hence the
	condition that they are local norms is automatic by
	Lemma \ref{lem:Rachel}. The result follows.
\end{proof}

We will make use of the following fact from \cite[Lem.~6.9]{HNP}. Here, we use the term \emph{bicyclic} for a non-cyclic group that is a direct sum of two cyclic groups.

\begin{lemma} \label{lem:G_i}
	Let $G$ be a finite abelian non-cyclic group. Then there exists a finite collection
	of bicyclic subgroups $G_i \subset G$ for $i \in I$ such that the natural map
	$$\bigoplus_{i \in I} \exterior (G_i) \to \exterior (G)$$
	is an isomorphism.
\end{lemma}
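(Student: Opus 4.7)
The plan is to decompose $G$ into cyclic summands and reduce the claim to the standard decomposition of the exterior square of a direct sum.

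First, I would invoke the structure theorem to write $G = \bigoplus_{j=1}^n C_j$ as a direct sum of cyclic groups (for concreteness one may take the invariant factor decomposition, so that $n \geq 2$ and each $|C_j| \geq 2$). Second, using the identity $\exterior(A \oplus B) \cong \exterior A \oplus \exterior B \oplus (A \otimes B)$ inductively, together with the fact that $\exterior C = 0$ for any cyclic group $C$, I obtain the canonical decomposition
\[
\exterior G \;=\; \bigoplus_{1 \le j < k \le n} C_j \otimes C_k.
\]

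Third, for each pair $j<k$ I would set $G_{jk} := C_j \oplus C_k \subseteq G$. The inclusion $G_{jk} \hookrightarrow G$ induces, on exterior squares, the natural inclusion of the summand $C_j \otimes C_k$ into $\exterior G$, which is an isomorphism onto that summand. A pair contributes nontrivially precisely when $C_j \otimes C_k \neq 0$, i.e.~when $\gcd(|C_j|,|C_k|) > 1$; for such pairs $G_{jk}$ is non-cyclic (using the standard fact that $C_m \oplus C_n$ is cyclic if and only if $\gcd(m,n)=1$), hence bicyclic in the sense of the lemma. Letting $I := \{(j,k) : j<k,\ \gcd(|C_j|,|C_k|)>1\}$ and summing over $I$ yields the required isomorphism.

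There is essentially no obstacle. The only point to verify is functoriality: the natural map $\exterior G_{jk} \to \exterior G$ is compatible with the direct-sum decomposition, landing precisely in the $(j,k)$-summand. The hypothesis that $G$ itself is non-cyclic is used only to guarantee (via the invariant factor decomposition) that $I$ is non-empty, so that genuine bicyclic subgroups exist.
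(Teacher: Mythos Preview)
Your argument is correct. The paper does not actually prove this lemma; it merely cites \cite[Lem.~6.9]{HNP}, and your decomposition via the structure theorem and the identity $\exterior(A\oplus B)\cong \exterior A\oplus\exterior B\oplus(A\otimes B)$ is precisely the standard proof one would expect to find there. One minor simplification: since you chose the invariant factor decomposition, each $|C_j|$ divides $|C_{j+1}|$, so $\gcd(|C_j|,|C_k|)=|C_j|\ge 2$ for every pair $j<k$, and hence every $G_{jk}$ is automatically bicyclic---there is no need to restrict the index set $I$.
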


As $\Upsilon \subset \wedge^2(G)$ is a proper subgroup, there exists some $i$ such that $\wedge^2 (G_i) \not \subset \Upsilon$. Fix this $i$ and write $G_i \cong \ZZ/n\ZZ \times \ZZ/m\ZZ$ where $n,m \mid e$. Let $v \in T$ be a place of $k$ which is completely split in the extension $k(\mu_e, \sqrt[e]{\mathcal{A}})$. There exists a $G_i$-extension of $k_v$: simply adjoin an $n$th root of a uniformiser to the unique unramified extension of $k_v$ of degree $m$. Thus, by local class field theory, there exists $\chi_v \in \Hom(k_v^*,G)$ such that $\Im \chi_v = G_i$. In particular we have $\wedge^2(\Im \chi_v) \not \subset \Upsilon$. 
For such places $v$ we find that
\begin{align*}
	& \# \{ \chi_v \in \Hom(k_v^*,G) : 
	\wedge^2(\Im \chi_v)\subset \Upsilon,
	\chi_v \text{ ramified}\}  \\
	& < \# \{ \chi_v \in \Hom(k_v^*,G) :
	\chi_v \text{ ramified}\}.
\end{align*}
Let $a_v= \# \{ \chi_v \in \Hom(k_v^*,G) : \chi_v \text{ ramified}\}$. Recall that tamely ramified $\chi_v$ have conductor $q_v$ and there are $|G|$ unramified $G$-characters.
Using Lemma \ref{lem:constant_calc}, it follows that 
\begin{align*}
\frac{c_{k,G,\Lambda_T}}{c_{k,G,\mathcal{A},\mathrm{loc}}}&\leq \prod_{\substack{v \in T  \\ v \text{ completely
	split in }k_e}} \frac{|G|+\frac{a_v-1}{q_v}+O\left( \frac{1}{q_v^2}\right)}{|G|+\frac{a_v}{q_v}+O\left( \frac{1}{q_v^2}\right)}\\
	&= \prod_{\substack{v \in T  \\ v \text{ completely
	split in }k_e}} \left(1 - \frac{1}{|G|q_v} + O\left( \frac{1}{q_v^2}\right)\right)
\end{align*}
However this diverges to $0$ as $S_0 \cup T \to \Omega_k$ since
$$-\sum_{v \text{ completely 	split in }k_e} \frac{1}{q_v}$$
diverges by  the Chebotarev density theorem. This proves \eqref{eqn:lim_equals_0} and completes the proof of Theorem \ref{thm:Upsilon}, hence the proof of Theorem \ref{thm:HNP_rare}. \qed

\begin{remark}
	Lemma \ref{lem:G_i} is the first part of the statement of \cite[Lem.~6.9]{HNP}.
	Unfortunately the second part of the statement \cite[Lem.~6.9]{HNP} is false
	(this claims that if the exponent of $\wedge^2(G)$ divides a prime $p$, then all
	the $G_i$ may chosen isomorphic to $(\ZZ/p\ZZ)^2$). A counterexample is given by
	the group $G = \ZZ/2\ZZ \times \ZZ/4\ZZ$ and the subgroup $G_1 = \ZZ/2\ZZ \times \ZZ/2\ZZ$;
	here the induced map
	$$\ZZ/2\ZZ = \wedge^2 (G_1) \to \wedge^2 (G) = \ZZ/2\ZZ$$
	is trivial. This mistake in \cite[Lem.~6.9]{HNP} has various consequences for \cite{HNP}
	which will be addressed in a forthcoming corrigendum.
\end{remark}

\subsection{Proof of Theorem \ref{thm:global}}
Follows from Theorems~\ref{thm:loc} and \ref{thm:HNP_rare}.  \qed

\subsection{Proof of Theorem \ref{thm:existence}}
Follows immediately from Theorem \ref{thm:global}. \qed

\subsection{Proof of Theorem \ref{thm:norm_rare}}
The implication (3)$\Rightarrow$(1) in Theorem \ref{thm:norm_rare} follows
from Lemma \ref{lem:Rachel}, Theorem~\ref{thm:main} and Theorem \ref{thm:HNP_rare}, as clearly 
$\varpi(k,G,\mathcal{A}) = \varpi(k,G,\{1\})$ in this case (we are only imposing finitely
many local conditions).
The implication (1)$\Rightarrow$(2) is clear from Definition \ref{def:varpi_intro} and Theorem~\ref{thm:global}. For the remaining implication (2)$\Rightarrow$(3), we note that (2) clearly implies that
\begin{equation}
  \label{eq:varpi_local_simple}
  \mathcal{A}_v \subset k_v^{*d} \text{ for all } d \mid e \text{ and all } v\nmid\infty \text{ with } q_v \equiv 1 \bmod d.
\end{equation}
Moreover, we have the following elementary observation.
\begin{lemma} \label{lem:local_power}
	Let $e \in \ZZ_{\geq 1}$, let $\alpha \in k^*$, and let $v$ be a place of $k$ such that
	$e,\alpha \in \OO_v^*$.
	Let $d = \gcd(e, q_v-1)$.
	If $\alpha \in k_v^{*d}$ then $\alpha \in k_v^{*e}$.
\end{lemma}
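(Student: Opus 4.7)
The plan is to use the explicit structure of $k_v^*$ and reduce the question to one about roots of unity in the residue field. Since $e\in\OO_v^*$, the integer $e$ is coprime to the residue characteristic $p$, and the same holds for $d\mid e$.

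First I would invoke the decomposition
\begin{equation*}
  \OO_v^* \cong \mu_{q_v-1} \times U^{(1)}, \qquad U^{(1)}=1+\mathfrak{p}_v,
\end{equation*}
coming from Hensel's lemma applied to $x^{q_v-1}-1$. Write $\alpha=\zeta u$ with $\zeta\in\mu_{q_v-1}$ and $u\in U^{(1)}$. The key observation is that since $\gcd(e,p)=1$, the map $x\mapsto x^e$ is a bijection on the pro-$p$ group $U^{(1)}$; in particular every element of $U^{(1)}$ is an $e$-th power there. Thus $\alpha\in k_v^{*e}$ if and only if $\zeta\in\mu_{q_v-1}^e$, and similarly $\alpha\in k_v^{*d}$ if and only if $\zeta\in\mu_{q_v-1}^d$.

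Next, in a cyclic group $C$ of order $n$, the subgroup of $e$-th powers depends only on $\gcd(e,n)$: explicitly, $C^e = C^{\gcd(e,n)}$. Applying this to $C=\mu_{q_v-1}$ (of order $n=q_v-1$), with $d=\gcd(e,q_v-1)$, we obtain
\begin{equation*}
  \mu_{q_v-1}^e = \mu_{q_v-1}^{\gcd(e,q_v-1)} = \mu_{q_v-1}^d.
\end{equation*}
Hence the two conditions $\zeta\in\mu_{q_v-1}^e$ and $\zeta\in\mu_{q_v-1}^d$ are equivalent, which yields the desired implication $\alpha\in k_v^{*d}\Rightarrow \alpha\in k_v^{*e}$.

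There is no real obstacle here; the only thing to be careful about is checking that both the statements about $U^{(1)}$ and about $\mu_{q_v-1}$ rely on the tameness hypothesis $e\in\OO_v^*$, which is supplied.
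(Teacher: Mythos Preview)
Your argument is correct and essentially the same as the paper's: the paper reduces modulo $v$, uses $\FF_v^{*d}=\FF_v^{*e}$ (the same cyclic-group fact you invoke for $\mu_{q_v-1}$), and then lifts via Hensel's lemma, which is exactly what your decomposition $\OO_v^*\cong \mu_{q_v-1}\times U^{(1)}$ together with the bijectivity of $x\mapsto x^e$ on $U^{(1)}$ makes explicit.
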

\begin{proof}
	As $\alpha \in k_v^{*d}$ and $\alpha$ is a unit, its image in the residue field lies in
	$\FF_v^{*d}$. However, as $d = \gcd(e, q_v-1)$, we have $\FF_v^{*d}=\FF_v^{*e}$.
	The result therefore follows from Hensel's lemma. 
\end{proof}

Hence, the remaining implication (2)$\Rightarrow$(3) in Theorem \ref{thm:norm_rare} follows immediately from  \eqref{eq:varpi_local_simple}
and Lemma \ref{lem:local_power}. \qed

\subsection{Proof of Corollary \ref{cor:norm_rare}} \label{sec:cor_norm_rare}
Let $\alpha \in \mathcal{A}$ and consider $\alpha \beta^e$ for some $\beta \in k^*$.
By Lemma \ref{lem:Rachel}, we see that $\beta^e$ is a norm everywhere locally from all $G$-extensions of $k$. It follows that $\alpha \beta^e$ is a norm everywhere locally from a given $G$-extension if and only if $\alpha$ is a norm everywhere locally. Part $(i)$ now follows from Theorem~\ref{thm:HNP_rare} and Theorem \ref{thm:loc}. Part $(ii)$ also follows from Lemma \ref{lem:Rachel} and Theorem \ref{thm:HNP_rare}.

For $(iii)$ and $(iv)$, we use the $\omega$-version of Tate--Shafarevich groups. Namely, for a finite abelian group scheme $M$ over $k$ we let
$$\Sha_\omega(k,M) = \{ c \in \HH^1(k,M) : c_v = 0 \in \HH^1(k_v,M) \text{ for all but finitely many }v \}.$$
By Kummer theory we have $ \HH^1(L, \mu_e)= L^*/L^{*e}$ for any field $L$ of characteristic $0$. Therefore Part $(iii)$ of Theorem \ref{thm:norm_rare} is equivalent to
$$\mathcal{A}k^{*e} \subset \Sha_\omega(k,\mu_e).$$
The key observation is now the following.

\begin{lemma} \label{lem:Sha}
	Let $k$ be a number field, let $e \in \ZZ_{\geq 1}$
	and let $2^r$ be the largest power of $2$ dividing $e$.
	Then $\Sha_\omega(k,\mu_e) =0$,
	unless the extension $k(\mu_{2^r})/k$
	is non-cyclic, where we have $\Sha_\omega(k,\mu_e) \cong \ZZ/2\ZZ$.
\end{lemma}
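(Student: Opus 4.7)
The plan is to use inflation--restriction to reduce the computation of $\Sha_\omega(k,\mu_e)$ to a group-cohomological question about $\Gamma := \Gal(k(\mu_e)/k)$, and then to treat the cyclic and non-cyclic cases for $\Gamma$ separately.

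First I would use the decomposition $\mu_e = \bigoplus_{\ell\mid e}\mu_{\ell^{v_\ell(e)}}$ of Galois modules to reduce to prime powers $e = \ell^s$. For each such prime power, set $F = k(\mu_{\ell^s})$ and $\Gamma = \Gal(F/k)$. Kummer theory gives $\HH^1(F,\mu_{\ell^s}) = F^*/F^{*\ell^s}$, and Chebotarev applied to the Kummer extension $F(\sqrt[\ell^s]{y})/F$ shows $\Sha_\omega(F,\mu_{\ell^s}) = 0$. The inflation--restriction sequence then embeds $\Sha_\omega(k,\mu_{\ell^s})$ into $\HH^1(\Gamma,\mu_{\ell^s})$. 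A second application of Chebotarev---decomposition groups at unramified places are cyclic subgroups of $\Gamma$ and exhaust them as $v$ varies---identifies this image with the subgroup of classes in $\HH^1(\Gamma,\mu_{\ell^s})$ whose restriction to every cyclic subgroup $C\subseteq\Gamma$ vanishes.

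Whenever $\Gamma$ is itself cyclic, taking $C=\Gamma$ forces any such class to be zero, so $\Sha_\omega(k,\mu_{\ell^s}) = 0$. This covers all odd primes $\ell$ (for which $(\ZZ/\ell^s)^*$ is cyclic) as well as $\ell = 2$ when $s\leq 2$ or $k(\mu_{2^s})/k$ is cyclic. Summing over $\ell\mid e$ then produces the vanishing statement of the lemma.

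The remaining case $\ell = 2$, $s \geq 3$, with $\Gamma$ non-cyclic is the classical Grunwald--Wang special situation. Any non-cyclic subgroup of $(\ZZ/2^s)^* \cong \langle -1\rangle\times\langle 5\rangle$ must contain the Klein four-group $V = \{\pm 1,\pm(1+2^{s-1})\}$, and I would produce an explicit cocycle (essentially the Grunwald--Wang exceptional element) in $\HH^1(\Gamma,\mu_{2^s})$ which is non-trivial but restricts to a coboundary on every cyclic subgroup, giving a copy of $\ZZ/2\ZZ$ inside $\Sha_\omega(k,\mu_{2^s})$. The main obstacle is to show that this is everything---that the cyclic-trivial kernel in $\HH^1(\Gamma,\mu_{2^s})$ has order exactly $2$---which reduces to a finite cocycle bookkeeping on $V$, or alternatively to a direct appeal to the classical Grunwald--Wang theorem.
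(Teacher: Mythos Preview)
Your approach is correct and is essentially a self-contained sketch of the proof of the result the paper simply cites: the paper's entire proof of this lemma is the sentence ``Follows immediately from \cite[Thm.~9.1.11]{NSW08}'', which is the Grunwald--Wang theorem in the form given in Neukirch--Schmidt--Wingberg. Your reduction to prime powers, the inflation--restriction step identifying $\Sha_\omega(k,\mu_{\ell^s})$ with the cyclic-trivial kernel in $\HH^1(\Gamma,\mu_{\ell^s})$, and the observation that $\Gamma$ is automatically cyclic for odd $\ell$ are exactly the ingredients that go into the proof of \cite[Thm.~9.1.11]{NSW08} itself; so you are unpacking the black box rather than taking a genuinely different route.

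The one place where your sketch is thin is the final claim that the cyclic-trivial kernel has order \emph{exactly} $2$ in the non-cyclic $2$-adic case. Producing a non-trivial class (the Grunwald--Wang element) is straightforward, but bounding the kernel from above does require a computation, and ``finite cocycle bookkeeping on $V$'' hides a little work: one has to check not only the restriction to the Klein four-group $V$ but to all cyclic subgroups of the possibly larger $\Gamma$, and then argue that nothing beyond the known class survives. Your fallback to citing Grunwald--Wang at that point is of course legitimate and is precisely what the paper does wholesale. If you want to keep the argument self-contained, the cleanest way is to reduce via the inflation $\HH^1(\Gamma/\Gamma_0,\mu_{2^s}^{\Gamma_0}) \hookrightarrow \HH^1(\Gamma,\mu_{2^s})$ for a suitable cyclic normal subgroup $\Gamma_0$ and compute directly on the quotient, but this is exactly the computation carried out in \cite[\S IX.1]{NSW08}.
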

\begin{proof}
	Follows immediately from \cite[Thm.~9.1.11]{NSW08}.
\end{proof}

The remaining parts of Corollary \ref{cor:norm_rare} now follow from  Lemma~\ref{lem:Sha}. \qed 

\begin{remark}
Even though we have $N_{\mathrm{loc}}(k,G,\mathcal{A},B) = N_{\mathrm{loc}}(k,G,\mathcal{A}\langle\beta^e\rangle,B)$ and $N_{\mathrm{glob}}(k,G,\mathcal{A},B) \sim N_{\mathrm{glob}}(k,G,\mathcal{A}\langle\beta^e\rangle,B)$, we can still have $N_{\mathrm{glob}}(k,G,\mathcal{A},B) \neq  N_{\mathrm{glob}}(k,G,\mathcal{A}\langle\beta^e\rangle,B)$.
For example, take $k=\QQ, G = (\ZZ/2\ZZ)^2, \mathcal{A} = \{1\},$ and $\beta = 5$ (see Example \ref{examples}(4)).
\end{remark}

\subsection{Proof of Theorem \ref{thm:positivenot1}}
The implication (2) $\Rightarrow $ (1) is self-evident. So suppose that $\lim_{B \to \infty} \frac{N_{\mathrm{glob}}(k,G,\mathcal{A},B)}{N(k,G,B)} >0$. Then by Theorem~\ref{thm:norm_rare} there exists a cofinite set of places $T \subset \Omega_k$ such that $\mathcal{A} \subset k_v^{*e}$  for all $v\in T$. By \cite[Thm.~9.1.11]{NSW08},
\[\Ker\Bigl(k^*/k^{*e}\to \prod_{v\in T} k_v^*/k_v^{*e}\Bigr)=\Ker\Bigl(k^*/k^{*e}\to \prod_{v\in T\cup\{v\nmid 2\}} k_v^*/k_v^{*e}\Bigr)\]
so we may assume that $T$ contains all $v\nmid 2$.
Let $\mathfrak{p}$ be the unique prime of $k$ lying above $2$. Let $\chi:\Gal(\bar{k}/k)\to G$ be a $G$-extension and let $\alpha\in\cA$. Then at all places $v\neq \mathfrak{p}$, the cyclic algebra $(\chi,\alpha)$ over $k$ has local invariant zero, because $\alpha$ is a local norm at $v$ by Lemma \ref{lem:Rachel}. Now the Albert--Brauer--Hasse--Noether Theorem \cite[Thm.~8.1.17]{NSW08} shows that $(\chi,\alpha)$ has local invariant zero at $\mathfrak{p}$, meaning that $\alpha$ is also a local norm at $\mathfrak{p}$. Therefore, all elements of $\mathcal{A}$ are everywhere local norms from all $G$-extensions of $k$. But $G$ is cyclic, hence every $G$-extension satisfies the Hasse norm principle; (2) now follows. \qed

\subsection{Variants of Theorems \ref{thm:existence} and \ref{thm:global}}

We finish with some variants of our results, which allow one to impose local conditions at finitely many places. Our first result is a variant of Theorem \ref{thm:global}, and follows immediately from Theorem \ref{thm:main} and Theorem \ref{thm:HNP_rare}.

\begin{corollary} \label{cor:existence_S}
	Retain the assumptions of Theorem \ref{thm:main}. Assume further
	that every element of $\mathcal{A}$ is a local norm from every
	extension in $\Lambda_v$ for all $v$, and that there exists a 
	sub-$G$-extension of $k$
	which realises the given local conditions for all places $v$.
	Then
	\begin{align*}
		&\#\{\varphi\in\gextk:\, \Phi(\varphi)\leq B, \, \varphi_v \in \Lambda_v \,\forall v, \, \mathcal{A} \subset \Norm_{K_\varphi/k} K_\varphi^* \} \\
		&= c_{k,G,\Lambda}B(\log B)^{\varpi(k,G,\mathcal{A}) -1}(1 +o(1)),
	\end{align*}
	for some leading constant $ c_{k,G,\Lambda} > 0$.
\end{corollary}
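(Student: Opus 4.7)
The plan is to reduce the statement to a clean combination of Theorem \ref{thm:main} and Theorem \ref{thm:HNP_rare}. First I would observe that the hypothesis that every element of $\mathcal{A}$ is a local norm from every extension in $\Lambda_v$ (for all $v$) means the local constraints $\varphi_v \in \Lambda_v$ already force $\mathcal{A} \subset \Norm_{K_\varphi/k}\Adele_{K_\varphi}^*$. Hence the set being counted is a subset of the one counted by $N(k,G,\Lambda,B)$, and
\begin{align*}
N(k,G,\Lambda,B) &- \#\{\varphi\in\gextk:\, \Phi(\varphi)\leq B, \, \varphi_v \in \Lambda_v \,\forall v, \, \mathcal{A} \subset \Norm_{K_\varphi/k} K_\varphi^* \} \\
&= \#\{\varphi\in\gextk:\, \Phi(\varphi)\leq B,\, \varphi_v \in \Lambda_v \,\forall v,\, \mathcal{A}\not\subset \Norm_{K_\varphi/k}K_\varphi^*\}.
\end{align*}

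Next I would bound this discrepancy. For any $\varphi$ in the latter set, $\mathcal{A}$ is a local norm everywhere but not a global norm, so in particular $K_\varphi$ fails the Hasse norm principle. Dropping the local constraints $\varphi_v\in\Lambda_v$ and replacing them by the (weaker) condition that $\mathcal{A}$ be a local norm everywhere, the discrepancy is therefore at most
\begin{equation*}
\#\{ \varphi \in \gextk : \Phi(\varphi) \leq B, \mathcal{A} \subset \Norm_{K_\varphi/k} \Adele_{K_\varphi}^*, K_\varphi \text{ fails the HNP} \},
\end{equation*}
which by Theorem \ref{thm:HNP_rare} is $o\bigl(N_{\mathrm{loc}}(k,G,\mathcal{A},B)\bigr)$. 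Combining this with Theorem \ref{thm:loc}, the discrepancy is $o\bigl(B(\log B)^{\varpi(k,G,\mathcal{A})-1}\bigr)$.

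Finally, Theorem \ref{thm:main} gives $N(k,G,\Lambda,B)=c_{k,G,\Lambda}B(\log B)^{\varpi(k,G,\mathcal{A})-1}(1+o(1))$, and the constant $c_{k,G,\Lambda}$ is strictly positive by the existence of a sub-$G$-extension of $k$ realising the local conditions $\Lambda_v$ at every place $v$, which is part of the hypothesis. Subtracting the $o(\cdot)$ discrepancy from the main asymptotic yields exactly the claimed formula with the same leading constant. There is no real obstacle here beyond bookkeeping: the only subtle point is verifying that the error term from Theorem \ref{thm:HNP_rare} is indeed of smaller order than the main term supplied by Theorem \ref{thm:main}, but this is immediate because both are calibrated against $B(\log B)^{\varpi(k,G,\mathcal{A})-1}$.
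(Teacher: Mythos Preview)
Your proof is correct and follows exactly the route the paper intends: it states that the corollary ``follows immediately from Theorem \ref{thm:main} and Theorem \ref{thm:HNP_rare}'', and you have simply spelled out the bookkeeping (including the use of Theorem \ref{thm:loc} to convert the $o(N_{\mathrm{loc}})$ bound into an explicit $o(B(\log B)^{\varpi(k,G,\mathcal{A})-1})$).
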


From this we immediately obtain the following strengthening of Theorem \ref{thm:existence}.

\begin{corollary} \label{cor:existence}
		Let $k$ be a number field, $S$ a finite set of places of $k$, $G$ a finite abelian group, and $\mathcal{A} \subset k^*$ a finitely generated subgroup. Let $\psi$ be a sub-$G$-extension of $k$ such that every element of $\mathcal{A}$ is everywhere locally a norm from $K_\psi$.  There exists a $G$-extension $\varphi$ of $k$ such that every element of $\mathcal{A}$ is a global norm from $K_\varphi$ and such that $\varphi_v = \psi_v$ for all $v \in S$.
\end{corollary}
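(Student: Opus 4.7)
The plan is to deduce this as a direct consequence of Corollary \ref{cor:existence_S}, by choosing the right collection of local conditions $\Lambda = (\Lambda_v)_{v \in \Omega_k}$. For each $v \in S$, I would take $\Lambda_v = \{\psi_v\}$, the singleton consisting of the local component of the given sub-$G$-extension. For $v \notin S$, I would take $\Lambda_v$ to be the set of all sub-$G$-extensions of $k_v$ from which every element of $\mathcal{A}$ is a local norm. By construction, this is precisely the form of local conditions to which Theorem \ref{thm:main}, and hence Corollary \ref{cor:existence_S}, applies (after possibly enlarging $S$ to satisfy the conventions of \S\ref{sec:S}, which only makes the statement stronger).

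The hypotheses of Corollary \ref{cor:existence_S} then need to be verified. That every element of $\mathcal{A}$ is a local norm from every extension in $\Lambda_v$ holds by definition at $v \notin S$, while at $v \in S$ it follows from the standing assumption that every element of $\mathcal{A}$ is everywhere locally a norm from $K_\psi$, applied in particular to the place $v$. The crucial non-emptiness condition, namely the existence of a sub-$G$-extension of $k$ realising all the local conditions simultaneously, is witnessed by $\psi$ itself: at $v \in S$ this is tautological, and at $v \notin S$ it is again the local norm hypothesis on $\psi$.

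Having checked the hypotheses, Corollary \ref{cor:existence_S} delivers an asymptotic formula of shape $c_{k,G,\Lambda} B(\log B)^{\varpi(k,G,\mathcal{A})-1}(1+o(1))$ with $c_{k,G,\Lambda}>0$ for the counting function
\[
\#\{\varphi\in\gextk:\, \Phi(\varphi)\leq B,\, \varphi_v = \psi_v \,\forall v \in S,\, \mathcal{A} \subset \Norm_{K_\varphi/k} K_\varphi^* \}.
\]
Since this quantity tends to infinity with $B$, the counted set is in particular non-empty, producing a $G$-extension $\varphi$ with every element of $\mathcal{A}$ a global norm from $K_\varphi$ and agreeing with $\psi$ at all places in $S$, as required. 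The main (minor) obstacle is the bookkeeping of ensuring that the local data $\Lambda$ is in the exact format required by Theorem \ref{thm:main}; the essential content — positivity of the leading constant — is built into Corollary \ref{cor:existence_S} via the sub-$G$-extension $\psi$ serving as a witness.
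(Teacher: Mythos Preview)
Your proposal is correct and matches the paper's approach exactly: the paper simply states that Corollary~\ref{cor:existence} follows immediately from Corollary~\ref{cor:existence_S}, and you have spelled out precisely this deduction with the choice $\Lambda_v=\{\psi_v\}$ for $v\in S$. Your parenthetical about enlarging~$S$ is unnecessary, since the statements of Theorem~\ref{thm:main} and Corollary~\ref{cor:existence_S} impose no restrictions on~$S$ (the conditions of \S\ref{sec:S} are internal to the proof), but it is harmless.
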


\begin{remark} \label{rem:Olivier}
Taking $\psi$ to correspond to the trivial extension $k/k$, we find the existence of an extension $K/k$ with Galois group $G$ such that every element of $\mathcal{A}$ is a norm from $K$ and such that $K$ is completely split at all places of $S$.
\end{remark}

\appendix

\section{An algebro-geometric point of view on Theorem~\ref{thm:existence}}

\begin{center}
{\sc by Yonatan Harpaz and Olivier Wittenberg }
\end{center}

\newcommand{\SL}{\mathrm{SL}}
\newcommand{\Z}{\ZZ}
\newcommand{\A}{\mathbb A}

\medskip
We give, in this appendix, an algebro-geometric proof of Theorem~\ref{thm:existence}, based
on a combination of the descent and fibration methods in the formulation they are given
in~\cite{HW18}. The main argument is described in \textsection\textsection\ref{app:subsecstatement}--\ref{app:subsecfib}. In~\S\ref{app:subsecvertical} we show that the refinement of Theorem~\ref{thm:existence} formulated in Corollary~\ref{cor:existence} can also be deduced in this manner by proving a certain verticality result on the Brauer groups of the varieties in question. This verticality uses in an essential way the fact that $G$ is abelian. In \S\ref{app:nonabelien} we show that when $G$ is not abelian, the statement of Corollary~\ref{cor:existence} is \emph{false}, by constructing a counterexample in the form of an explicit $2$-group. Nonetheless, as we show in the upcoming work~\cite{supersolvable}, Theorem~\ref{thm:existence} does hold for $2$-groups (and more generally for
nilpotent groups, even for supersolvable groups).

Let us fix, for the whole of \textsection\textsection\ref{app:subsecstatement}--\ref{app:subsecvertical},
a finite abelian group~$G$,
a field~$k$ of characteristic~$0$,
a finite collection $\alpha_1,\dots,\alpha_m \in k^*$
and an algebraic closure~$\bar k$ of~$k$.
In \textsection\textsection\ref{app:subsecstatement}--\ref{app:subsecfib},
we assume that~$k$ is a number field.

\subsection{Statements}
\label{app:subsecstatement}

Let us
choose an embedding $G \hookrightarrow \SL_n(k)$ for some~$n\geq 1$.
Let~$\SL_n$ and~$\Gm$ implicitly denote the corresponding algebraic groups over~$k$.
For any $\alpha \in k^*$,
let $T^\alpha \subset \prod_{g \in G} \Gm$ denote the
subvariety whose $\bar k$\nobreakdash-points are the maps $t:G \to \bar k^*$
such that $\prod_{g \in G}t(g)=\alpha$.  Thus~$T^\alpha$ is a (trivial)
torsor under the (trivial) torus~$T^1$.

Let $Y = \SL_n \times T^{\alpha_1} \times \dots \times T^{\alpha_m}$.
Let~$G$ act on~$\SL_n$ by right multiplication,
on $T^\alpha$ (for any~$\alpha$) by the right action $(t\cdot \gamma)(\gamma')=t(\gamma\gamma')$,
and on~$Y$ by the resulting diagonal right action.
As~$G$ acts freely on~$\SL_n$, it acts freely on~$Y$;
hence, letting $X = Y/G$,
the quotient map $\pi:Y \to X$ is a $G$\nobreakdash-torsor.

The fibre of~$\pi$ above any rational point of~$X$ is a $G$\nobreakdash-torsor over $\Spec(k)$,
say $\Spec(K)$, where~$K$ is an \'etale $k$\nobreakdash-algebra endowed with elements
$\beta_1,\dots,\beta_m \in K^*$ such that $N_{K/k}(\beta_i)=\alpha_i$ for all~$i$.
Namely $\beta_i$ is the restriction to the fibre in question
of
the invertible function $(s,t_1,\dots,t_m) \mapsto t_i(1)$ on~$Y$,
where~$1$ denotes the identity element of~$G$.
If the algebra~$K$ is a field, then $K/k$ is Galois with group~$G$, since it is a $G$\nobreakdash-torsor.
Thus, all we need to do, to show Theorem~\ref{thm:existence},
is to prove that there exists $x \in X(k)$ such that $\pi^{-1}(x)$ is irreducible.
Letting~$X'$ denote a smooth compactification of~$X$ (i.e.\ any smooth and proper
variety containing~$X$ as a dense open subset), we shall
in fact prove the following theorem.

\begin{theorem}
\label{app:theorem}
The set $X'(k)$ is dense in the Brauer--Manin set $X'(\A_k)^{\Br(X')}$.
\end{theorem}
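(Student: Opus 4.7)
The plan is to realise $X$ as the total space of a fibration over a geometrically rational base whose fibres are products of torsors under norm-one tori, and then to invoke the descent-fibration machinery developed in \cite{HW18}. First, I would note that the $G$\nobreakdash-equivariant projection $Y \to \SL_n$ descends to a morphism $q: X \to Z$, where $Z := \SL_n/G$; the natural map $\rho: \SL_n \to Z$ is itself a $G$\nobreakdash-torsor, so for each $z \in Z(k)$ one obtains an \'etale $k$\nobreakdash-algebra $K_z$ of degree $|G|$. A direct twisting computation then identifies the fibre $q^{-1}(z)$ with $\prod_{i=1}^{m} V^{\alpha_i}_{K_z}$, where $V^{\alpha}_{K} := \{x \in \Res_{K/k}(\Gm) : N_{K/k}(x) = \alpha\}$ is a principal homogeneous space under the norm-one torus $R^1_{K/k}(\Gm)$; in particular it has a $k$\nobreakdash-point if and only if every $\alpha_i$ is a norm from $K_z$.

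Two arithmetic inputs would then be available. First, by Sansuc's theorem \cite{San81}, the Brauer--Manin obstruction is the only obstruction to weak approximation on smooth projective models of the fibres $q^{-1}(z)$, $z \in Z(k)$, since they are torsors under tori. Second, the base $Z$ is unirational, hence has Zariski-dense rational points, and Ekedahl's adelic form of Hilbert's irreducibility theorem applied to $\rho$ produces many $z \in Z(k)$ for which $K_z$ is a field realising prescribed local conditions at finitely many places.

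With these ingredients in hand, the proof would proceed by applying a version of the fibration theorem of \cite{HW18} to a proper flat extension $\bar q: X' \to Z'$ of $q$ between smooth compactifications, producing the desired density of $X'(k)$ in $X'(\A_k)^{\Br(X')}$. The hard part will be the verification of the technical hypotheses at the boundary $Z' \setminus Z$: one must analyse the (potentially degenerate) fibres of $\bar q$ above divisors at infinity and control how the Brauer group of $X'$ decomposes relative to those of $Z'$ and of a smooth fibre. The specific shape of our construction---$X$ is a twisted product over $Z$, not an arbitrary torus fibration---should make this tractable, because the behaviour of $X'$ at infinity is governed by the explicit Galois module structure of the character lattice of $R^1_{K_z/k}(\Gm)$ and by the splitting behaviour of norm-one tori over the irreducible components of $Z' \setminus Z$.
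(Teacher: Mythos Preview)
Your geometric picture is correct: the fibres of $q:X\to Z=\SL_n/G$ over rational points are indeed twists of $T^{\alpha_1}\times\cdots\times T^{\alpha_m}$, i.e.\ products of norm torsors, and Sansuc's theorem does control the Brauer--Manin obstruction on their compactifications. The difficulty is the base. The fibration results of \cite{HW18} that you want to invoke (specifically Th\'eor\`eme~4.2) are proved for fibrations over $\mathbb{P}^n$ or over a quasi-trivial torus; they are not available for an arbitrary smooth proper base such as a compactification~$Z'$ of $\SL_n/G$. Your final paragraph correctly identifies the boundary analysis as ``the hard part'', but you have no mechanism for carrying it out, and it is not clear that the explicit shape of the character lattices alone would suffice: one would first need a rationality statement for~$Z$ (or some substitute) and then a careful study of the degenerate fibres of~$\bar q$ over the exceptional locus of a birational map $Z\dashrightarrow\mathbb{P}^N$, on top of the fibres over~$Z'\smallsetminus Z$. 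This is a genuine gap.

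The paper avoids this obstacle by inserting a \emph{descent} step before the fibration. One chooses a torus~$T$ fitting into an exact sequence $1\to G\to T\to Q\to 1$ with~$Q$ quasi-trivial (this exists because the type $\widehat G\hookrightarrow\Pic(X\otimes_k\bar k)$ lifts to $\Pic(X'\otimes_k\bar k)$), and applies \cite[Cor.~2.2]{HW18} to a $T$\nobreakdash-torsor over~$X'$ of the appropriate type. By \cite[Prop.~3.1]{HW18}, each such torsor admits a smooth map $p:W\to Q$ whose fibres over rational points are twists~$Y^\sigma$ of~$Y$---the very same varieties that appear as your fibres of~$q$. Now the base is the quasi-trivial torus~$Q$, so Th\'eor\`eme~4.2(ii) of \cite{HW18} applies directly, once one checks that the generic fibre of $p\otimes_k\bar k$ has a rational point (which follows because each $\alpha_i$ is a $|G|$-th power in~$\bar k$). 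In short: your fibre analysis is essentially the right one, but the missing idea is to trade the finite group~$G$ for a torus via descent, so that the base of the resulting fibration becomes a quasi-trivial torus rather than $\SL_n/G$.
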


We note that $X'(\A_k)^{\Br(X')}$ is non-empty since so is $X(k)$;
indeed, even $Y(k)$ is non-empty.
The desired result now follows from
Theorem~\ref{app:theorem}:

\begin{corollary}
\label{app:corollary}
The set of $x \in X(k)$ such that $\pi^{-1}(x)$ is irreducible
is dense in the (non-empty) Brauer--Manin set $X'(\A_k)^{\Br(X')}$.
\end{corollary}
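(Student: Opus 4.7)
My plan is to deduce Corollary~\ref{app:corollary} directly from Theorem~\ref{app:theorem} by appealing to Ekedahl's version of Hilbert's irreducibility theorem. The key observation is that the set of ``bad'' $k$-rational points, i.e.\ those with reducible fiber, is a thin subset of $X'(k)$ in Serre's sense, and Ekedahl's theorem forbids any thin set from being dense in a nonempty Brauer--Manin set.

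First I would identify the relevant thin set. For $x\in X(k)$, the fiber $\pi^{-1}(x)=\Spec(K_x)$ is a $G$-torsor over $\Spec(k)$ and hence is irreducible precisely when the associated homomorphism $\rho_x : \Gal(\bar k/k)\to G$ is surjective. If $\rho_x$ fails to be surjective, then it factors through some maximal proper subgroup $H\subsetneq G$, which is to say that $x$ lies in the image of the $k$-points of the finite étale cover $Y/H\to X$, of degree $|G/H|\geq 2$. Since $G$ is finite abelian, there are only finitely many such $H$, and the union of the corresponding images is a thin subset of $X(k)$. Adjoining the $k$-points of the complement $X'\setminus X$ (a proper Zariski-closed subset of~$X'$, hence thin as well), I obtain a thin subset $\Xi\subset X'(k)$ such that every point of $X'(k)\setminus\Xi$ lies in $X(k)$ and has irreducible fiber.

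Next I would invoke Ekedahl's theorem \cite[Theorem~1.3]{ekedahl} (see also \cite[\textsection\textsection3.5--3.6]{serretopics}), which asserts that no thin subset of $X'(k)$ can be dense in the Brauer--Manin set $X'(\A_k)^{\Br(X')}$ when the latter is nonempty. Since Theorem~\ref{app:theorem} provides density of $X'(k)$ in $X'(\A_k)^{\Br(X')}$, removing from $X'(k)$ the thin subset~$\Xi$ preserves density, and the resulting dense subset is contained in $\{x\in X(k): \pi^{-1}(x)\text{ is irreducible}\}$, which establishes the corollary. The genuine content of the argument lies entirely in Theorem~\ref{app:theorem}; the step from there to Corollary~\ref{app:corollary} is essentially the standard mechanism by which the fibration method delivers Galois extensions with prescribed local behaviour.
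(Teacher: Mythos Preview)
Your overall strategy coincides with the paper's: deduce the corollary from Theorem~\ref{app:theorem} via Ekedahl's version of Hilbert irreducibility. Your identification of the relevant thin set~$\Xi$ is correct. However, two points in the execution are not right.

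First, Ekedahl's theorem does not assert what you attribute to it. There is no mention of Brauer--Manin sets in~\cite{ekedahl}; what Ekedahl's proof actually yields (and what the paper extracts) is the purely local statement that for any finite set~$S$ of places there exist a finite set~$S'$ disjoint from~$S$ and $v$\nobreakdash-adic open conditions at each $v\in S'$ such that every $x\in X(k)$ satisfying them has $\pi^{-1}(x)$ irreducible.

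Second, your inference ``$\Xi$ not dense in $X'(\A_k)^{\Br(X')}$ and $X'(k)$ dense, hence $X'(k)\setminus\Xi$ dense'' is a non sequitur: in any topological space a dense set can contain a non-dense subset whose complement in the dense set is also non-dense. To bridge this gap one needs the Brauer--Manin set to be \emph{open} in $X'(\A_k)$. Granting openness, any nonempty relatively open $U\subset X'(\A_k)^{\Br(X')}$ is already open in $X'(\A_k)$ and is determined by conditions at a finite set~$S$ of places; one then applies Ekedahl with this~$S$ to obtain conditions at a disjoint~$S'$, modifies the adelic point at~$S'$ while staying in~$U$ (hence still in the Brauer--Manin set), and finally invokes Theorem~\ref{app:theorem} to approximate by a rational point, which then lies outside~$\Xi$. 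The paper explicitly supplies this missing openness, observing that~$X$ is geometrically unirational and citing \cite[Remarks~2.4(i)--(ii)]{wslc}; without this ingredient your argument does not go through.
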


\begin{proof}
This is essentially an application of a theorem of Ekedahl~\cite[Thm.~1.3]{ekedahl}
(also discussed and proved in \cite[\textsection\textsection3.5--3.6]{serretopics}).
What Ekedahl really shows in~\cite{ekedahl} is that
for any finite \'etale morphism $\pi:Y\to X$ between geometrically irreducible varieties over~$k$
and for any finite set~$S$ of places of~$k$, there exist a finite set~$S'$ of places of~$k$,
disjoint from~$S$, and a collection $(x'_v)_{v \in S'} \in \prod_{v \in S'} X(k_v)$
such that for any $x \in X(k)$ close enough to $(x'_v)_{v \in S'}$ for the product topology
on $\prod_{v \in S'} X(k_v)$,
the scheme $\pi^{-1}(x)$
is irreducible.
Theorem~\ref{app:theorem} implies Corollary~\ref{app:corollary}
in view of this statement and of the remark that
the Brauer--Manin set is open in~$X'(\A_k)$
as~$X$ is geometrically unirational
(see \cite[Remarks~2.4 (i)--(ii)]{wslc}).
\end{proof}

As we have seen,
Corollary~\ref{app:corollary}
implies
Theorem~\ref{thm:existence}.
In a less immediate way, it also implies
Corollary~\ref{cor:existence}.
Indeed, noting that any sub-$G$\nobreakdash-extension of~$k$, in the terminology
of~\textsection\ref{sec:statement_main}, arises as the fibre of the quotient
map $\SL_n \to \SL_n/G$ above a rational point of $\SL_n/G$
(see \cite[Ch.~I, \textsection5.4, Cor.~1]{Ser94} and recall that $\HH^1(k,\SL_n)$
is a singleton by Hilbert's Theorem~90),
we see that Corollary~\ref{cor:existence} follows
from combining Corollary~\ref{app:corollary}
with Proposition~\ref{app:propvertical} below.

\begin{proposition}
\label{app:propvertical}
Let $B=\SL_n/G$ and $b \in B(k)$.  Let $f:X \to B$ be the map induced by the first projection $Y \to \SL_n$.
Let~$\Omega$ denote the set of places of~$k$.
Let $(x_v)_{v\in \Omega} \in \prod_{v \in \Omega} X(k_v)$.
If $f(x_v)=b$ for all $v\in\Omega$,
then $(x_v)_{v\in\Omega} \in X'(\A_k)^{\Br(X')}$.
\end{proposition}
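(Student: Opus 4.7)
My plan is to reduce to weak approximation on the fibre $X_b = f^{-1}(b)$, after first showing that this fibre is isomorphic to a finite product of quasi-trivial tori.

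First I would analyse the structure of $X_b$. Since $\SL_n \to B$ is a $G$-torsor, the scheme-theoretic fibre $F_b \subset \SL_n$ over $b$ is a $G$-torsor over $\Spec(k)$, corresponding to an étale $k$-algebra $L$ of dimension $|G|$. The fibre of $Y \to B$ over $b$ is $Y_b = F_b \times T^{\alpha_1} \times \cdots \times T^{\alpha_m}$ with the diagonal $G$-action, and $X_b = Y_b/G$. The projections onto $F_b \times T^{\alpha_i}$ are $G$-equivariant and induce a map $X_b \to \prod_{i=1}^{m} (F_b \times T^{\alpha_i})/G$, which after base change to $\bar k$ (where $F_b$ becomes a trivial $G$-torsor) is the identity on $\prod_i T^{\alpha_i}$; hence it is already an isomorphism over $k$.

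Next I would identify $(F_b \times T^{\alpha_i})/G$ with the Weil restriction $\Res_{L/k}\Gm$. The torus $T^1 = \prod_{g \in G} \Gm$ is the permutation $G$-torus, and its $G$-twist by $F_b$ is, by descent, the quasi-trivial torus $\Res_{L/k}\Gm$; the variety $(F_b \times T^{\alpha_i})/G$ is then a torsor under this torus. By Shapiro's lemma combined with Hilbert's Theorem~90, $\HH^1(k,\Res_{L/k}\Gm) = \HH^1(L,\Gm) = 0$, so every such torsor is trivial. Thus $(F_b \times T^{\alpha_i})/G \cong \Res_{L/k}\Gm$, and combining with the first step, $X_b \cong (\Res_{L/k}\Gm)^m$ as $k$-varieties.

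Finally I would conclude via weak approximation and global reciprocity. A finite product of quasi-trivial tori satisfies weak approximation (which for $\Res_{L/k}\Gm$ reduces to classical weak approximation in the number field $L$), so $X_b(k)$ is dense in $\prod_{v\in S}X_b(k_v)$ for every finite set $S$ of places of $k$. Fix $A \in \Br(X')$. Since $X'$ is smooth and proper over $k$, a standard spreading-out argument yields a finite set $S$ of places of $k$ outside of which the local evaluation $\inv_v A\colon X'(k_v)\to\QQ/\ZZ$ vanishes identically; at places of $S$ it is continuous. Using weak approximation on $X_b$, I would choose $x \in X_b(k)\subset X(k)\subset X'(k)$ so close to $(x_v)_{v\in S}$ that $\inv_v A(x)=\inv_v A(x_v)$ for every $v\in S$. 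Then
\[
\sum_{v\in\Omega}\inv_v A(x_v)=\sum_{v\in S}\inv_v A(x_v)=\sum_{v\in S}\inv_v A(x)=\sum_{v\in\Omega}\inv_v A(x)=0,
\]
the last equality being global reciprocity applied to the class $A(x) \in \Br(k)$ pulled back from the rational point $x$.

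The hardest step will be the geometric identification $(F_b \times T^{\alpha_i})/G \cong \Res_{L/k}\Gm$. The content there is that $T^{\alpha_i}$ is a $G$-equivariantly trivialisable torsor under the permutation torus $T^1$, together with the descent interpretation of the quotient $(\cdot \times F_b)/G$ as twisting by the cocycle of $F_b$; once this is in place, the vanishing of $\HH^1(L,\Gm)$ finishes the argument.
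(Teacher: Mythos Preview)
Your identification of~$T^1$ is incorrect, and this breaks the argument. By definition, $T^\alpha \subset \prod_{g\in G}\Gm$ is the closed subvariety cut out by $\prod_{g\in G}t(g)=\alpha$; in particular $T^1$ is the \emph{norm-one} subtorus of the permutation torus, of dimension $|G|-1$, with character module $\ZZ[G]/\ZZ$ rather than $\ZZ[G]$. Its twist by the $G$-torsor $F_b=\Spec(L)$ is therefore $\Res^1_{L/k}\Gm = \Ker(\Norm_{L/k}:\Res_{L/k}\Gm\to\Gm)$, not the quasi-trivial torus $\Res_{L/k}\Gm$. The twisted torsor $(F_b\times T^{\alpha_i})/G$ is then the norm variety $\{y\in \Res_{L/k}\Gm : \Norm_{L/k}(y)=\alpha_i\}$, whose class in $\HH^1(k,\Res^1_{L/k}\Gm)\cong k^*/\Norm_{L/k}L^*$ is that of~$\alpha_i$. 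This group need not vanish, so the torsor need not be trivial; more to the point, norm-one tori and their torsors can fail weak approximation, so your approximation step cannot be carried out in general.

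Your Step~1 (the product decomposition of~$X_b$) is fine, and the final reciprocity manoeuvre would work if weak approximation held on~$X_b$, but it does not in general. The paper instead proves the algebraic statement $\Br(X')\subseteq f^*\Br(B)$ (verticality of the unramified Brauer group with respect to~$f$). Once this is known, the proposition is immediate: for $A\in\Br(X')$ write $A=f^*\beta$ with $\beta\in\Br(B)$, so $A(x_v)=\beta(b)$ is independent of~$v$ and the sum of local invariants vanishes by reciprocity. To prove verticality, the paper compares with base change to~$\bar k$ via the Brauer group of a compactified toric torsor over~$k(B)$, reduces to showing $\Br(X'_{\bar k})=0$, and computes the latter using stable birationality with $(T^1\times\cdots\times T^1)/G$ together with Saltman's formula and the injectivity of $\HH^3(G,\ZZ)\to\prod_{H}\HH^3(H,\ZZ)$ over bicyclic subgroups~$H$.
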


We shall prove Theorem~\ref{app:theorem} in
\textsection\textsection\ref{app:subsecdescent}--\ref{app:subsecfib}
and Proposition~\ref{app:propvertical}
in~\textsection\ref{app:subsecvertical}.

\subsection{Descent}
\label{app:subsecdescent}

To prove Theorem~\ref{app:theorem}, we first perform a descent, in the sense of Colliot-Th\'el\`ene and
Sansuc~\cite{ctsansuc},
to reduce ourselves to studying the
arithmetic of hopefully simpler auxiliary varieties.
If~$V$ is a variety over~$k$,
we denote by~$\bar k[V]^*$ the group of global invertible functions on $V \otimes_k \bar k$.

\begin{proposition}
\label{app:proposition}
We have $\bar k[X]^*=\bar k^*$.
\end{proposition}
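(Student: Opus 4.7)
The strategy is to compute $\bar k[X]^*$ as the $G$-invariants of $\bar k[Y]^*$ (since $\pi:Y\to X$ is a $G$-torsor, $\OO_X=(\pi_*\OO_Y)^G$), and to show that modulo constants these invariants vanish. Concretely, from the short exact sequence $1 \to \bar k^* \to \bar k[Y]^* \to \bar k[Y]^*/\bar k^* \to 1$ and the identity $(\bar k^*)^G = \bar k^*$, it suffices to establish that $(\bar k[Y]^*/\bar k^*)^G = 0$.

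To analyse $\bar k[Y]^*/\bar k^*$ I would invoke Rosenlicht's lemma to split it, compatibly with the diagonal $G$-action, as the direct sum of $\bar k[\SL_n]^*/\bar k^*$ and the $\bar k[T^{\alpha_i}]^*/\bar k^*$. The first summand vanishes because $\SL_n$ is a connected semisimple algebraic group and so has no nontrivial characters, hence no nontrivial units modulo constants. It therefore remains to study each factor $T^{\alpha_i}$ separately.

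Each $T^{\alpha_i}$ is a trivial torsor under the torus $T^1 \subset \prod_{g\in G}\Gm$ cut out by $\prod_g t(g)=1$, and over $\bar k$ one identifies $\bar k[T^{\alpha_i}]^*/\bar k^*$ with the character lattice $X^*(T^1)$, which is the cokernel $\Z^G/\Z\!\cdot\!\mathbf{1}$ of the diagonal map $\Z \to \Z^G$, $1\mapsto(1,\dots,1)$. A basis is given by the classes of the evaluation functions $e_g:t\mapsto t(g)$. The key computation is the $G$-action: from $(t\cdot\gamma)(g)=t(\gamma g)$ one reads off $\gamma^* e_g = e_{\gamma g}$, so $G$ acts on $\Z^G$ by left translation of the index.

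The final step is purely group-theoretic. From the exact sequence $0 \to \Z\!\cdot\!\mathbf{1} \to \Z^G \to \Z^G/\Z\!\cdot\!\mathbf{1} \to 0$, taking $G$-invariants yields
\[
0 \to \Z \to (\Z^G)^G \to (\Z^G/\Z\!\cdot\!\mathbf{1})^G \to H^1(G,\Z).
\]
Since $G$ permutes the basis of $\Z^G$ transitively, $(\Z^G)^G = \Z\!\cdot\!\mathbf{1}$, so the first two terms already match; and $H^1(G,\Z)=\Hom(G,\Z)=0$ as $G$ is finite. Hence $(\Z^G/\Z\!\cdot\!\mathbf{1})^G = 0$. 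Summing over $i$, one concludes that $(\bar k[Y]^*/\bar k^*)^G = 0$, and therefore $\bar k[X]^* = \bar k^*$. The only mildly delicate point is getting the direction of the $G$-action on the character lattice right; once this is pinned down, the vanishing is immediate from $H^1(G,\Z)=0$.
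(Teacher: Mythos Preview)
Your proof is correct and follows essentially the same route as the paper's: both identify $\bar k[X]^*$ with $(\bar k[Y]^*)^G$, use Rosenlicht's lemma to reduce to the character lattice $\Z[G]/\Z$ of the torus $T^1$, and conclude from $(\Z[G])^G=\Z$ and $\HH^1(G,\Z)=0$ that $(\Z[G]/\Z)^G=0$. Your version is slightly more explicit about the $G$-action on the $e_g$'s, but the argument is the same.
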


\begin{proof}
We first remark that there is a canonical exact sequence of abelian groups
\begin{align}
\label{appendix:eqky}
0 \to \bar k^* \to \bar k[Y]^* \to (\Z[G]/\Z)^m \to 0
\end{align}
whose arrows are equivariant with respect to the actions of $\Gal(\bar k/k)$ and of~$G$;
indeed, Rosenlicht's lemma (see \cite[Lem.~10]{ctsansuctores}) shows that
\begin{align}
\bar k[Y]^*/\bar k^*=\bar k[\SL_n]^*/\bar k^* \oplus \bar k[T^{\alpha_1}]^*/\bar k^*
\oplus\dots
\oplus \bar k[T^{\alpha_m}]^*/\bar k^*\rlap{,}
\end{align}
while it is well known that $\bar k[\SL_n]^*=\bar k^*$ and that $\bar k[T^\alpha]^*/\bar k^*$,
for any $\alpha \in k^*$, is the
character group of the torus under which~$T^\alpha$ is a torsor.
On the other hand, by the exact sequence
\begin{align}
0\to \Z\to \Z[G]\to \Z[G]/\Z\to 0
\end{align}
and by the vanishing of $\HH^1(G,\Z)$, we have $\HH^0(G,\Z[G]/\Z)=0$.
We can now deduce from~\eqref{appendix:eqky} that $\bar k[X]^* = (\bar k[Y]^*)^G = \bar k^*$.
\end{proof}

Set $\widehat G=\Hom(G,\bar k^*)$.
We recall that the
\emph{type}
of the $G$\nobreakdash-torsor $\pi:Y \to X$
is, by definition, the isomorphism class of  $\pi \otimes_k \bar k:Y \otimes_k \bar k \to X \otimes_k \bar k$
as a $G$\nobreakdash-torsor
over $X\otimes_k \bar k$
and that it can be identified, thanks to Proposition~\ref{app:proposition}, with
a homomorphism $\lambda:\widehat G \to \Pic(X \otimes_k \bar k)$.
(See \cite[(3.3)]{HW18}, for this (standard) identification.)
The homomorphism~$\lambda$ is injective as~$G$ is finite and~$Y$ is geometrically connected
(see \cite[p.~40, Exercise~2]{Sko01}).

Let us denote by $\nu:\widehat T \hookrightarrow \Pic(X' \otimes_k \bar k)$
 the inverse image of $\lambda:\widehat G \hookrightarrow \Pic(X\otimes_k \bar k)$
by the restriction map $\Pic(X' \otimes_k \bar k)\to \Pic(X \otimes_k \bar k)$.
As in \cite[(3.1)]{HW18}, we have
a short exact sequence of $\Gal(\bar k/k)$\nobreakdash-modules
\begin{align}
\xymatrix{
0 \ar[r]& \widehat Q \ar[r]& \widehat T\ar[r]& \widehat G \ar[r]& 0\rlap{,}
}
\end{align}
where~$\widehat Q$ is a permutation $\Gal(\bar k/k)$\nobreakdash-module, and, dually, a short exact sequence
\begin{align}
\xymatrix{
1 \ar[r]&  G \ar[r]& T\ar[r]& Q \ar[r]& 1
}
\end{align}
of commutative algebraic groups over~$k$, where~$Q$ is a quasi-trivial torus and~$G$ is viewed as a constant
$k$\nobreakdash-group.  We note that~$T$ is a torus since $\Pic(X'\otimes_k\bar k)_{\mathrm{tors}}=0$
(see \cite[Prop.~1]{Ser59}).

As $X'(k)\neq\varnothing$, there exists a torsor over~$X'$, under~$T$, of type~$\nu$
(see \cite[Cor.~2.3.9]{Sko01}).
Applying \cite[Cor.~2.2]{HW18}\footnote{
All of the Brauer groups that appear in
Corollaire~2.2 of~\cite{HW18}
are unramified Brauer groups, hence this corollary is really a
 statement about Brauer--Manin sets of
smooth compactifications of  torsors,
even though
smooth compactifications do not figure explicitly in it.}
 to such a torsor,
we now see that
in order to prove Theorem~\ref{app:theorem},
it suffices to prove that rational points are dense in the Brauer--Manin set
for a smooth compactification of any torsor over~$X'$, under~$T$, of type~$\nu$.

\subsection{Fibration}
\label{app:subsecfib}

By \cite[Prop.~3.1]{HW18},
which we can apply
since
 $\bar k[X]^*=\bar k^*$
(see Proposition~\ref{app:proposition}),
any torsor over~$X'$, under~$T$, of type~$\nu$
contains an open subset~$W$ admitting a smooth
map $p:W \to Q$ whose fibres over the rational points of~$Q$ are torsors over~$X$, under~$G$, of type~$\lambda$.
In order to prove that rational points are dense in the Brauer--Manin set for a smooth
compactification of~$W$, we shall 
first prove that the base~$Q$ and
the fibres of~$p$ over the rational points of~$Q$ satisfy this property, then solve the
``fibration problem'' to deduce it for~$W$.

The variety~$Q$ is rational over~$k$ since it is a quasi-trivial torus, so the assertion on the base is trivial.
The fibre of~$p$ above any rational point of~$Q$
is in fact a twist~$Y^\sigma$ of~$Y$ by a $1$\nobreakdash-cocycle $\sigma \in Z^1(k,G)$,
since two torsors of a given type can only differ by such a twist.
As~$G$ acts diagonally
on $Y = \SL_n \times T^{\alpha_1} \times \dots \times T^{\alpha_m}$,
we have $Y^\sigma = (\SL_n)^\sigma \times (T^{\alpha_1})^\sigma \times \dots \times (T^{\alpha_m})^\sigma$.
On the one hand, we have $(\SL_n)^\sigma \simeq \SL_n$ since $\HH^1(k,\SL_n)$
is a singleton (Hilbert's Theorem~90);
hence $(\SL_n)^\sigma$ is rational over~$k$.
On the other hand, for any $\alpha \in k^*$,
the variety
$(T^\alpha)^\sigma$ is a torsor under the torus $(T^1)^\sigma$.
All in all~$Y^\sigma$ is birationally equivalent to a torsor under a torus over~$k$.
We conclude that for any smooth compactification~$Z$ of a fibre of~$p$ above a rational point of~$Q$,
the set $Z(k)$ is indeed dense in $Z(\A_k)^{\Br(Z)}$
(see
\cite[Thm.~6.3.1]{Sko01},
 \cite[Prop.~6.1~(iii)]{ctpalsko}).

A positive solution to the fibration problem for fibrations into rationally connected varieties
over a quasi-trivial torus is obtained
in \cite[Th.~4.2~(ii)]{HW18} under the assumption that a rational
section exists over~$\bar k$.
(The existence of such a rational section
ensures that the hypothesis of \emph{loc.\ cit.}\ is satisfied, as shown in
\cite[Lem.~1.1(b)]{Sko96}.)
Fortunately, this last condition holds in our situation.

\begin{proposition}
\label{app:propgenratpoint}
The generic fibre of $p\otimes_k \bar k:W \otimes_k \bar k \to Q \otimes_k \bar k$ possesses a rational point.
\end{proposition}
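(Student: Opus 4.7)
The plan is to identify the generic fibre of $p \otimes_k \bar k$ with a specific twisted form of $Y$ and then to exhibit a $\bar k(Q)$-point on that twist using the product decomposition of $Y$. As a first step, I would compare the $T$-torsor $W|_{X_{\bar k}} \to X_{\bar k}$ with the pushforward $T$-torsor $Y_{\bar k} \times_G T_{\bar k} \to X_{\bar k}$ obtained by applying $G \hookrightarrow T$ to $\pi:Y \to X$. Both torsors have the same type, namely the composition $\widehat{T} \twoheadrightarrow \widehat{G} \xrightarrow{\lambda} \Pic(X \otimes_k \bar k)$. The Colliot-Th\'el\`ene--Sansuc exact sequence (applicable because $\bar k[X]^*=\bar k^*$ by Proposition~\ref{app:proposition}) shows that two $T$-torsors over $X_{\bar k}$ of equal type differ by a class in $\HH^1(\bar k, T_{\bar k})$, which vanishes since $T_{\bar k}$ is a split torus over a field. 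Hence there is an isomorphism of $T$-torsors $W|_{X_{\bar k}} \simeq Y_{\bar k} \times_G T_{\bar k}$ over $X_{\bar k}$. Moreover, $p$ is $T$-equivariant (with $T$ acting on $Q$ via the quotient $T\to Q$), and any two such equivariant maps from a $T$-torsor to $Q$ differ by a $\bar k$-translation of $Q$. So under this isomorphism $p$ is identified, up to such a translation, with the canonical map $(y,t) \mapsto \pi_Q(t)$, and the translation does not affect the generic fibre over $\eta_Q$.

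Writing $\pi_Q^{-1}(\eta_Q)=\Spec(\bar k(T))$, the extension $\bar k(T)/\bar k(Q)$ is finite Galois of degree $|G|$ with Galois group $G$, and the generic fibre of the canonical map is computed as $(Y_{\bar k} \times_{\bar k} \Spec(\bar k(T)))/G$, where $G$ acts diagonally via its given right action on $Y$ and via the Galois action on $\bar k(T)$. By Galois descent this is precisely the twisted form $Y^{\mathrm{tw}}$ of $Y_{\bar k(Q)}$ by the $G$-torsor $\Spec(\bar k(T)) \to \Spec(\bar k(Q))$. The $G$-invariant product decomposition $Y = \SL_n \times \prod_{i=1}^{m} T^{\alpha_i}$ induces a corresponding decomposition $Y^{\mathrm{tw}} = (\SL_n)^{\mathrm{tw}} \times_{\bar k(Q)} \prod_i (T^{\alpha_i})^{\mathrm{tw}}$, so it suffices to produce a $\bar k(Q)$-point on each factor.

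The factor $(\SL_n)^{\mathrm{tw}}$ is the twist of $\SL_n$ by a $G$-torsor acting via right multiplication through the chosen embedding $G \hookrightarrow \SL_n$; viewed as an $\SL_n$-torsor over $\bar k(Q)$, it is trivial by Hilbert~90 and so has a rational point. For each $(T^{\alpha_i})^{\mathrm{tw}}$, a Galois-equivariant $\bar k(T)$-point of $T^{\alpha_i}$ amounts to a map $t:G \to \bar k(T)^*$ satisfying $\prod_{g\in G} t(g) = \alpha_i$ together with $\sigma(t(g)) = t(\sigma g)$ for all $\sigma, g \in G$. Such a map is uniquely determined by the single value $\beta_i := t(1) \in \bar k(T)^*$, subject only to the norm equation $N_{\bar k(T)/\bar k(Q)}(\beta_i) = \alpha_i$. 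Since $\bar k$ is algebraically closed, I would take $\beta_i = \alpha_i^{1/|G|} \in \bar k^* \subset \bar k(T)^*$; then $N_{\bar k(T)/\bar k(Q)}(\beta_i) = \beta_i^{|G|} = \alpha_i$, which yields the required rational point on $(T^{\alpha_i})^{\mathrm{tw}}$.

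The main delicate point in this plan is the compatibility step at the beginning: verifying that the map $p$ constructed in [HW18, Proposition~3.1] is indeed $T$-equivariant, and that the chosen isomorphism of underlying $T$-torsors identifies $p$ with $(y,t) \mapsto \pi_Q(t)$ up to a $\bar k$-translation of $Q$. This requires unwinding the explicit construction of $p$ from the trivializations of the line bundles at the boundary divisors and checking that both maps share the same $T$-equivariance, so that they agree up to the indicated ambiguity and hence have the same generic fibre.
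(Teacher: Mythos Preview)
Your argument follows essentially the same route as the paper: identify the generic fibre as a twist $Y^\sigma$ of $Y$ by some cocycle $\sigma \in Z^1(\bar k(Q),G)$, use the diagonal $G$-action to split it as $(\SL_n)^\sigma \times \prod_i (T^{\alpha_i})^\sigma$, dispose of the $\SL_n$ factor by Hilbert~90, and handle each $(T^{\alpha_i})^\sigma$ via a $|G|$-th root of~$\alpha_i$ in~$\bar k$. The paper phrases this last step as: the constant map $g\mapsto \alpha_i^{1/|G|}$ is a $G$-invariant $\bar k$-point of $T^{\alpha_i}$, yielding a $G$-equivariant isomorphism $T^{\alpha_i}\otimes_k\bar k \simeq T^1\otimes_k\bar k$, so that $(T^{\alpha_i})^\sigma\simeq (T^1)^\sigma$ is a torus and hence has a rational point. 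This is equivalent to your norm-equation formulation.

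Where you diverge is the opening step, and there you work harder than necessary. You try to pin down the generic fibre as the twist by the \emph{specific} $G$-torsor $\Spec(\bar k(T))\to\Spec(\bar k(Q))$, which forces you through a comparison of $T$-torsors over $X_{\bar k}$ and into the ``delicate point'' about $T$-equivariance of~$p$ and the translation ambiguity on~$Q$. The paper bypasses this entirely: it simply uses the property of~$p$ already recorded from \cite[Proposition~3.1]{HW18}, that the fibres of~$p$ are $G$-torsors over~$X$ of type~$\lambda$. Applied over the generic point of $Q\otimes_k\bar k$, this says the generic fibre is a $G$-torsor over $X_{\bar k(Q)}$ of type~$\lambda$; so is $Y_{\bar k(Q)}$; hence they differ by \emph{some} $\sigma\in Z^1(\bar k(Q),G)$, with no need to know which one. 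Since your arguments for each factor work uniformly in~$\sigma$, nothing more is required, and the equivariance issue you flag never arises.
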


\begin{proof}
This generic fibre is a twist of $Y\otimes_k \bar k(Q)$ by a $1$\nobreakdash-cocycle
$\sigma \in Z^1(\bar k(Q),G)$.  Arguing as above, we see that it has a rational point
if and only if $(T^{\alpha_i} \otimes_k \bar k(Q))^\sigma$ has a rational point for each~$i$.
Writing $\alpha_i$ as a $|G|$\nobreakdash-th power in~$\bar k^*$
determines a $G$\nobreakdash-invariant $\bar k$\nobreakdash-point of $T^{\alpha_i}$,
hence a $G$\nobreakdash-equivariant
isomorphism $T^{\alpha_i} \otimes_k \bar k = T^1 \otimes_k \bar k$.
Thus $(T^{\alpha_i} \otimes_k \bar k(Q))^\sigma$ is isomorphic to $(T^1 \otimes_k \bar k(Q))^\sigma$,
a variety which certainly has a rational point since it is a torus.
\end{proof}

Applying
\cite[Th.~4.2~(ii)]{HW18} to a suitable compactification of~$p$
therefore completes the proof of Theorem~\ref{app:theorem}.

\subsection{Verticality of the Brauer group}
\label{app:subsecvertical}

It remains to prove Proposition~\ref{app:propvertical}.
As $X'(\A_k)^{\Br(X')}$ is closed in $X'(\A_k)$, we are free to replace~$x_v$, for~$v$ outside of an arbitrarily
large finite set of places of~$k$, with another $k_v$\nobreakdash-point of the same fibre of~$f$.
In particular, we may assume that $(x_v)_{v \in \Omega}$ is an adelic point of this fibre.
We may then view it as an adelic point of~$X$.
As such,
it is orthogonal,
for the Brauer--Manin
pairing $X(\A_k) \times \Br(X) \to \QQ/\ZZ$, to
$f^*\mathrm{Br}(B)$.
Proposition~\ref{app:propvertical} therefore results from the following purely algebraic statement,
in which~$k$ is allowed to be an arbitrary field of characteristic~$0$.

\begin{proposition}
\label{app:propverticalalg}
Viewing~$\Br(X')$ and $f^*\mathrm{Br}(B)$ as subgroups of~$\Br(X)$,
one has an inclusion $\Br(X') \subseteq f^*\mathrm{Br}(B)$.
\end{proposition}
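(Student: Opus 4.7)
The plan is to pull back by the étale $G$-torsor $\pi\colon Y\to X$ and exploit the explicit product decomposition $Y=\SL_n\times T^{\alpha_1}\times\cdots\times T^{\alpha_m}$. A crucial simplification is that each $T^{\alpha_i}$ admits a $k$-rational point, namely $t_e=\alpha_i$ and $t_g=1$ for $g\neq e$; hence, as an abstract $k$-variety (forgetting the $G$-action), $T^{\alpha_i}$ is $k$-isomorphic to $T^1$. Consequently $Y$ is $k$-isomorphic to $\SL_n\times(T^1)^m$, a product of a $k$-rational variety with a torus, and admits a smooth projective model $Y'$ that is $k$-birational to $\overline{\SL_n}\times W'$ for some smooth projective compactification $W'$ of $(T^1)^m$.

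I would then compute $\Br(Y')$ via the étale K\"unneth decomposition with $\mu_n$-coefficients. Since $\SL_n$ is simply connected semisimple, $\HH^1(\overline{\SL_n},\mu_n)=0$ for every $n$, so the Picard--Picard cross term in the K\"unneth formula vanishes. Combined with $\Br(\overline{\SL_n})=\Br(k)$ (from $k$-rationality of $\SL_n$), this yields a decomposition $\Br(Y')/\Br(k)\cong\Br(W')/\Br(k)$ realized via the second projection. For any $\beta\in\Br(X')$, the pullback $\pi^*\beta$ to a suitable $Y'$ is an element of $\Br(Y')$ which must be $G$-equivariant because it descends to $X=Y/G$, so it is forced to come from the $G$-invariant part of $\Br(W')/\Br(k)$ for the twisted diagonal action on $(T^1)^m$.

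To conclude, I would use the Hochschild--Serre spectral sequence for the $G$-torsor $\pi$ and compare with the analogous sequence for $p_B\colon \SL_n\to B$, identifying these $G$-invariant classes with elements of $f^*\Br(B)/\Br(k)$. The kernel of $\pi^*$ is controlled by $\HH^2(G,\bar k[Y]^*)$; via the exact sequence~(\ref{appendix:eqky}) and the analogous computation $\bar k[\SL_n]^*=\bar k^*$, this kernel matches, term for term, the kernel of $p_B^*\colon\Br(B)\to\Br(\SL_n)$, so it is already contained in $f^*\Br(B)$.

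The main obstacle will be making the K\"unneth decomposition work cleanly on \emph{smooth proper} compactifications (rather than on the open varieties), and tracking the $G$-action through these compactifications so that $G$-invariant torus-factor Brauer classes indeed descend to $\Br(B)$ under the quotient $\SL_n\to B$. A subtler technical point is ensuring that a smooth compactification $X'$ can be dominated by a compactification $Y'$ of $Y$ admitting an extension of $\pi$: one resolves the ramified cover $Y'\to X'$ in codimension one and checks that pullbacks of unramified Brauer classes remain unramified, so that the decomposition-based argument applies uniformly.
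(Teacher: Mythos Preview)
Your approach has a genuine gap at the step where you claim the kernel of $\pi^*$ ``matches, term for term, the kernel of $p_B^*\colon\Br(B)\to\Br(\SL_n)$''. This is false whenever $G$ is non-cyclic, which is the only interesting case. From the Hochschild--Serre spectral sequence for $\pi$ (noting $\Pic(Y)=0$), one has $\Ker(\pi^*\colon\Br(X)\to\Br(Y))$ surjected onto by $\HH^2(G,k[Y]^*)$, and similarly $\Ker(p_B^*)$ by $\HH^2(G,k[\SL_n]^*)=\HH^2(G,k^*)$. But the exact sequence~\eqref{appendix:eqky} shows $\HH^2(G,k[Y]^*)$ carries an extra contribution from $\HH^2(G,(\Z[G]/\Z)^m)\cong\HH^3(G,\Z)^m\cong\Hom(\wedge^2G,\QQ/\ZZ)^m$, which is non-zero precisely when $G$ is non-cyclic. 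These extra classes in $\Ker(\pi^*)$ are not \emph{a priori} in $f^*\Br(B)$, and handling them is the entire crux of the proposition.

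Your K\"unneth step is in fact a red herring: since $(T^1)^m$ is already a split torus, $Y\cong\SL_n\times(T^1)^m$ is $k$-rational, so $\Br(Y')=\Br(k)$ directly and hence $\pi^*\beta\in\Br(k)$ for $\beta\in\Br(X')$. The content lies entirely in showing that $\Ker(\pi^*)\cap\Br(X')\subseteq f^*\Br(B)$, which amounts to showing that the unramified classes arising from the $\HH^3(G,\Z)^m$ piece vanish. The paper does this differently: it first exploits the fibration $f$ itself (whose generic fibre is a torsor under a torus split by $k(\SL_n)/k(B)$) to reduce to the case $k=\bar k$, and then over~$\bar k$ observes that $X$ is stably birational to $(T^1)^m/G$ and applies Saltman's formula for the unramified Brauer group of a multiplicative invariant field. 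The key input is the injectivity of $\HH^3(G,\Z)\to\prod_{H}\HH^3(H,\Z)$ over bicyclic subgroups~$H$, which is exactly what forces the troublesome $\HH^3(G,\Z)^m$ classes to be ramified. Your proposal does not invoke this or any equivalent statement, so it cannot conclude.
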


\begin{proof}
Let~$V$ be a smooth compactification of the generic fibre~$V^0$ of~$f$.
As $V^0$
is a torsor under a torus over~$k(B)$
split by the extension $k(\SL_n)/k(B)$,
as $V^0\otimes_k\bar k$
is a torsor under a torus over $\bar k(B)$
split by the extension
$\bar k(\SL_n)/\bar k(B)$
 and as the natural map
 $\Gal(\bar k(\SL_n)/\bar k(B))\to \Gal(k(\SL_n)/k(B))$ is an isomorphism,
the following well-known lemma implies that
the pull-back map
$$
\Br(V) /  f^*\mathrm{\Br}(k(B)) \to
\Br(V\otimes_k\bar k) / f^*\mathrm{\Br}(\bar k(B))
$$
is injective.

\begin{lemma}
Let~$T$ be a torus over a field~$K$,
with character group~$\widehat T$,
split by
a finite Galois extension~$L/K$.
For any smooth and proper variety~$V$ over~$K$ containing a torsor under~$T$ as a dense open subset,
there is a canonical embedding
$\Coker(\Br(K)\to\Br(V))
\hookrightarrow \HH^2(\Gal(L/K),\widehat T)$.
\end{lemma}

\begin{proof}
Let~$\bar L$ denote a separable closure of~$L$
and $V^0$ the open subset in question.
As $\Br(V\otimes_K\bar L)=0$
and $\Br(L)\twoheadrightarrow \Br(V\otimes_K L)$,
the Hochschild--Serre spectral sequence provides an embedding
of
$\Coker(\Br(K)\to\Br(V))$
into the kernel of the restriction map
$\HH^1(K, \Pic(V \otimes_K\bar L))\to
\HH^1(L, \Pic(V \otimes_K\bar L))$,
that is, into $\HH^1(\Gal(L/K), \Pic(V\otimes_KL))$.
On the other hand,
the exact sequence
$$
0 \to \widehat T \to \Div_{(V \setminus V^0) \otimes_K L}(V \otimes_K L) \to \Pic(V \otimes_KL) \to 0
$$
(see \cite[p.~130]{Sko01}) embeds this group into $\HH^2(\Gal(L/K),\widehat T)$.
\end{proof}

As~$f$ is smooth and surjective,
we have
 $f^*\mathrm{\Br}(k(B)) \cap \Br(X') \subseteq f^*\mathrm{\Br}(B)$
as subgroups of $\Br(k(X))$.
It follows that the pull-back map
$$
\Br(X') / \big(\mathrm{Br}(X')\cap f^*\mathrm{\Br}(B)\big) \to
\Br(X'\otimes_k\bar k) / \big(\mathrm{Br}(X'\otimes_k\bar k)\cap f^*\mathrm{\Br}(B\otimes_k \bar k)\big)
$$
is injective as well.
Thanks to this injectivity,
we now see that in order to prove Proposition~\ref{app:propverticalalg}, we may assume that~$k$ is algebraically closed.

The generic fibre of the natural map
$X \to  (T^{\alpha_1} \times \dots \times T^{\alpha_m})/G$
is a left torsor under~$\SL_n$,
hence is isomorphic to~$\SL_n$ (Hilbert's Theorem~90).  It follows
that~$X$ is stably birationally equivalent to
$(T^{\alpha_1} \times \dots \times T^{\alpha_m})/G$.
This variety
is isomorphic to
$(T^1 \times \dots \times T^1)/G$
when~$k$ is algebraically closed,
as we have seen in the proof of Proposition~\ref{app:propgenratpoint}.
In addition,
the unramified
Brauer group of
$(T^1 \times \dots \times T^1)/G$
vanishes
when~$k$ is algebraically closed,
by Saltman's formula
\cite[Thm.~8.7]{ctsrationality} and by the next lemma.
Hence $\Br(X')=0$ in this case.

\begin{lemma}\label{lem:bog-multiplier}
Let $\mathcal B_G$ denote the set of
subgroups of~$G$ generated by two elements.
For any finite abelian group~$G$
and for $M=\Z[G]/\Z$ or $M=\QQ/\ZZ$,
the product of restriction maps
$\HH^2(G, M) \to \prod_{H \in \mathcal B_G} \HH^2(H, M)$
is injective.
\end{lemma}

\begin{proof}
As $\HH^2(G,\QQ)$ and $\HH^2(G,\ZZ[G])$ vanish, this follows from the injectivity
of the product of restriction maps
\begin{align}
\label{app:restrmap}
\HH^3(G,\ZZ) \to \prod_{H \in \mathcal B_G} \HH^3(H,\ZZ)\rlap.
\end{align}
It is a general fact, valid for an arbitrary finite group~$G$,
that the kernel of~\eqref{app:restrmap} remains unchanged
if one replaces~$\mathcal B_G$ with the set of abelian subgroups
of~$G$ (see
\cite[Thm.~7.1]{ctsrationality}), which implies the desired injectivity when~$G$ is abelian.
Alternatively, this injectivity
results from Lemma \ref{lem:G_i}  and \cite[Lem.~6.4]{HNP}.
\end{proof}

This completes the proof of Proposition~\ref{app:propverticalalg}.
\end{proof}

\subsection{Nonabelian Galois groups}\label{app:nonabelien}
\newcommand{\ab}{\mathrm{ab}}
\newcommand{\nr}{\mathrm{nr}}
\def\Beta{\mathfrak{P}}
\def\<{\langle}
\def\>{\rangle}
The descent-fibration argument described in \S\ref{app:subsecdescent} and \S\ref{app:subsecfib} is modelled after a similar argument appearing in~\cite{HW18}, profiting in addition from the favourable circumstance of $G$ being abelian. In general, the inductive argument of~\cite{HW18} is constructed to handle also nonabelian groups, as long as they admit a suitable filtration into normal subgroups whose successive quotients are cyclic; such groups are also known as \emph{supersolvable}. Though the variety $X$ considered here is more complicated than the one considered in~\cite{HW18}, 
the argument of \emph{loc.\ cit.}\ can be adapted to yield the statement of Theorem~\ref{thm:existence} for any supersolvable $G$, see~\cite{supersolvable}. 
Interestingly enough, though, it turns out that the stronger claim appearing in Corollary~\ref{cor:existence} \emph{does not} hold for a general nonabelian group $G$,
even when~$G$ is supersolvable (indeed, even when~$G$ is a $2$\nobreakdash-group). This is due to the fact that the variety $X$ may contain unramified Brauer classes which are not vertical with respect to the projection $f:X \to B$, and which can obstruct the weak approximation of local points on $X$, even when those local points lie over a rational point of~$B$. (Such Brauer classes do not exist in the abelian case; see Proposition~\ref{app:propverticalalg}.)
Let us now illustrate how one can construct a nonabelian example where exactly this happens.

We shall say that a group $H$ is \emph{weakly bicyclic} if it is an extension of a cyclic group by
a cyclic group. We note that if $K/k$ is a Galois extension with Galois group $G$ then the decomposition subgroups $H_v \subseteq G$ are weakly bicyclic at every finite place $v$ which does not divide the order of $G$. Given a group $G$, we shall denote by $\mathcal B_G$ the set of weakly bicyclic subgroups of~$G$
(a notation compatible with Lemma~\ref{lem:bog-multiplier} when~$G$ is abelian).

\begin{proposition}\label{p:example}
Let $G$ be a finite $2$-group satisfying the following properties:
\begin{enumerate}
\item[(i)] $G$ has exponent $\leq 16$.
\item[(ii)]
The abelianization $G^{\ab}$ has exponent $2$ and is generated by images of elements of $G$ of order $2$.
\item[(iii)]
There exists an element $\varphi \in \HH^2(G,\ZZ/2\ZZ)$
whose restriction to every cyclic subgroup of~$G$ of order $16$ vanishes, whose restriction to at least one cyclic subgroup of~$G$ of order $8$
does not vanish,
and
whose image by the natural map $\delta:\HH^2(G,\ZZ/2\ZZ) \to \HH^2(G, \QQ/\ZZ)$ belongs to, and spans, the kernel of the product of restriction maps
$\HH^2(G, \QQ/\ZZ) \to \prod_{H \in \mathcal B_G} \HH^2(H, \QQ/\ZZ)$.
\end{enumerate}
Let $H \subseteq G$ be a cyclic subgroup of order $8$ on which $\varphi$ does not vanish.  Then:
\begin{enumerate}
\item[(1)]\label{i:1}
There exist $G$-extensions $K/\QQ$ which are unramified at $2$ and whose decomposition groups at $2$ are conjugate to $H$. 
\item[(2)]
For every $G$-extension $K/\QQ$ as in~\hyperref[i:1]{(1)}, the element $256 \in \QQ^*$ is a local norm from $K$ at every place of $\QQ$, but not a global norm from $K$.
\end{enumerate}
In particular, the statement of Corollary~\ref{cor:existence} does not hold for $G$ with $k=\QQ$, $S=\{2\}$ and $\mathcal{A} \subset k^*$ the subgroup generated by $256$. 
\end{proposition}

The proof of Proposition~\ref{p:example} requires a bit of preparation.
In the next lemma, we denote by $\Br_{\nr}(B)$, $\Br_1(B)$, $\Br_{1,\nr}(B)$, $\Br_0(B)$ the subgroups of~$\Br(B)$
consisting, respectively, of unramified, algebraic, algebraic unramified,  constant classes.

\begin{lemma}\label{l:example}
Let $G \subseteq \SL_n(\QQ)$ be a finite subgroup. Let $B=\SL_n/G$.
\begin{enumerate}
\item
If $G$ satisfies Condition~(ii) of Proposition~\ref{p:example}, then $\Br_{1,\nr}(B) = \Br_0(B)$.
\item
If $G$ satisfies Condition~(iii) of Proposition~\ref{p:example}, then $\Br_{\nr}(B)=\Br_{1,\nr}(B)$. 
\end{enumerate}
In particular, for $G$ as in Proposition~\ref{p:example}, we have 
$\Br_{\nr}(B)=\Br_0(B)$.
\end{lemma}
\begin{proof}
Condition (ii) implies that for any field $K$, the group $\HH^1(K,G^{\ab})$ is generated by elements in the image of the pointed set $\HH^1(K,G)$ (and even by elements  coming from $\HH^1(K,\ZZ/2\ZZ)$ via 
homomorphisms $\Z/2\Z\to G$). The first claim then follows, by local and global duality, from
 \cite[Prop.~4]{Harari}. 
Let us now explain why Condition~(iii) implies that $\Br_{\nr}(B)=\Br_{1,\nr}(B)$. 
For every subgroup $H \subseteq G$, the Hochschild--Serre spectral sequences for the $H$-coverings $\pi_H:\SL_n \to \SL_n/H$ and $\SL_{n,\bar \QQ} \to \SL_{n,\bar \QQ}/H$,
together with the inclusion of roots of unity $\mu_{\infty} \subseteq \bar \QQ^*$,
give rise to a commutative diagram
\begin{align}
\begin{aligned}
\label{app:diag:hs}
\xymatrix@R=2.5ex@C=1.75em{
\Ker\bigl(\Br(\SL_n/H) \to \Br(\SL_n)\bigr) \ar[d] & \HH^2(H,\QQ^*) \ar[l]_(.3)\sim \ar[d] & \HH^2(H,\mu_2) \ar[l] \ar@{->>}[d] \\
\Ker\bigl(\Br(\SL_{n,\bar \QQ}/H) \to \Br(\SL_{n,\bar \QQ})\bigr)^{\Gamma_\QQ} & \HH^2(H,\bar \QQ^*)^{\Gamma_\QQ} \ar[l]_(.3)\sim & \HH^2(H,\mu_{\infty})^{\Gamma_\QQ} \ar[l]_{\sim}\rlap{,}
}
\end{aligned}
\end{align}
where $\Gamma_\QQ=\Gal(\bar\QQ/\QQ)$ is the absolute Galois group of $\QQ$. The horizontal arrows between the first two columns are isomorphisms since $\Pic(\SL_{n}) = \Pic(\SL_{n,\bar\QQ}) = 0$,
and the bottom right horizontal map is an isomorphism since $\bar \QQ^*/\mu_{\infty}$ is uniquely divisible. In addition, the rightmost vertical map is surjective: indeed, this map fits in the middle of the commutative diagram with exact rows
\[ \xymatrix{
0 \ar[r] & \Ext^1(\HH_1(H),\mu_2) \ar[r]\ar[d] & \HH^2(H,\mu_2) \ar@{->>}[r]\ar@{->>}[d] & \Hom(\HH_2(H),\mu_2)
\ar[d]^(.45){\wr}  \\
0 \ar[r]^-{\sim} & \Ext^1(\HH_1(H),\mu_\infty)^{\Gamma_\QQ} \ar[r] & \HH^2(H,\mu_\infty)^{\Gamma_\QQ} \ar[r]^-{\sim} & \Hom(\HH_2(H),\mu_\infty)^{\Gamma_\QQ} 
}\]
determined by the universal coefficient theorem, where $\Ext^1(\HH_1(H),\mu_\infty) = 0$ since $\mu_\infty$ is a divisible group. 

We now fix a $\beta \in \Br_{\nr}(B)$ and aim to show that $\beta$ is algebraic. By adding to $\beta$ a constant class, we may assume that $\beta(\pi_G(1))=0$.
As~$\SL_n$ is rational over~$\QQ$, we have $\Br_\nr(\SL_n)=\Br_0(\SL_n)$,
and so $\pi_G^*\mkern.5mu\beta=0$.
Considering the diagram~\eqref{app:diag:hs} for $G=H$ and using the surjectivity
of its right vertical map, we find $\beta_G \in \HH^2(G,\mu_2)$ whose eventual image in $\Br(B_{\bar \QQ})$ 
is the same as the image of $\beta$. 
Now by Bogomolov's formula (see, e.g., \cite[Thm.~7.1]{ctsrationality}),
the group $\Br_{\nr}(\SL_{n,\bar \QQ}/H)$ vanishes whenever~$H$ is weakly bicyclic, and so
by the naturality of~\eqref{app:diag:hs},
the image of~$\beta_G$ in $\HH^2(H,\mu_{\infty})$ 
vanishes for every $H \in \mathcal B_G$.
Since $\mu_{\infty} \cong \QQ/\ZZ$ as abelian groups via a choice of a compatible system of roots of unity, Condition~(iii) implies that the image of $\beta_G$ in $\HH^2(G,\mu_\infty)$ is either~$0$ or the image of $\varphi \in \HH^2(G,\ZZ/2) = \HH^2(G,\mu_2)$ under the natural map $\HH^2(G,\mu_2) \to \HH^2(G,\mu_{\infty})$.
By possibly amending the choice of~$\beta_G$,
we may assume that
 $\beta_G \in \{0,\varphi\}$. We then write $\beta_1 \in \Br(B)$ for the image of $\beta_G$, and set $\beta_2 := \beta - \beta_1$. By construction, $\beta_1$ (and hence also $\beta_2$) vanishes when pulled back to $\SL_n$, and $\beta_2$ also vanishes when pulled back to $B_{\bar \QQ}$. In particular, $\beta_2 \in \Ker(\Br_1(B) \to \Br_1(\SL_n))$. 

Let now $H \subseteq G$ be a cyclic subgroup of order~$8$ on which~$\varphi$ does not vanish.
As~$\beta$ is unramified,
there exists a prime~$p_0$ such that~$\beta$
evaluates trivially on $B(\QQ_p)$ for all $p>p_0$.
Choose~$p>p_0$ such that
there exists a cyclic extension $L/\QQ_p$ of degree~$4$ that does not extend to a cyclic extension of degree~$8$
(any~$p$ such that~$-1$ is a square but not a $4$th power modulo~$p$ will do).
Embed $\Gal(L/\QQ_p)$ into~$H$.
The image of the class of $L/\QQ_p$ by the resulting
map $\HH^1(\QQ_p,\Gal(L/\QQ_p)) \to \HH^1(\QQ_p,G)$
is the class of the torsor
$\pi_G^{-1}(b)$ for some point
 $b \in B(\QQ_p)$ (see \cite[\textsection1.2]{Harari}), which we fix. 
 
By the choice of $p$, we have $\beta(b) = 0$. We claim that $\beta_2(b)=0$ as well.
Indeed, as $\beta_2$ is algebraic and $\beta_2(\pi_G(1)) = 0$, it follows that $\beta_2$ vanishes when
pulled back to the universal torsor $\SL_n/G'\to B$, where $G' := \Ker(G \to G^{\ab})$ is the derived subgroup of $G$. On the other hand, since~$G^\ab$ has exponent~$2$ and the subgroup $\Gal(L/\QQ_p) \subseteq H$ consists of elements divisible by~$2$, we have $\Gal(L/\QQ_p) \subseteq G'$. This means that $b$ lifts to $\SL_n/G'$ and so $\beta_2(b)=0$. Therefore $\beta_1(b)=0$.

Let us prove that $\beta_G=0$.
By contradiction, assume that $\beta_G = \varphi$. Then the restriction of $\beta_G$ to~$H$ is the non-trivial element of $\HH^2(H,\ZZ/2\ZZ) = \ZZ/2\ZZ$, which is the one classifying the central extension $\tilde{H} \to H$ with $\tilde{H}$ cyclic of order 16. This element restricts to the non-trivial element of $\HH^2(\Gal(L/\QQ_p),\ZZ/2\ZZ)$ for the cyclic order 4 subgroup $\Gal(L/\QQ_p) \subseteq H$, and further to a non-trivial element of $\HH^2(\QQ_p,\ZZ/2\ZZ)$ by the assumption that $L$ does not extend to a cyclic degree 8 extension. 
As $\beta_1(b)= 0 $, this is absurd.
We conclude that $\beta_G = 0$ and $\beta = \beta_2$,
which completes the proof that $\Br_{\nr}(B)=\Br_{1,\nr}(B)$.
\end{proof}

\begin{proof}[Proof of Proposition~\ref{p:example}]
Fix an embedding $G \hookrightarrow \SL_n(\QQ)$ 
and let $B=\SL_n/G$. By Lemma~\ref{l:example}, we have $\Br_{\nr}(B) = \Br_0(G)$, and so the existence of a $G$\nobreakdash-extension $K/\QQ$ as in~\hyperref[i:1]{(1)} follows from~\cite[Th.~B]{HW18},
since $G$ is nilpotent and in particular supersolvable. Let us  choose such an extension $K/\QQ$. As $256$ is positive, it is a norm from $K_{\infty}$. In addition, $16$ is an $8$th power
(and hence $256$ is a $16$th power)
 in~$\QQ_p$ for every odd~$p$; indeed, one of $2,-2$ or $-1$ is a square, and in the latter case $2i=(1+i)^2$ is a square.
Since $G$ has exponent at most $16$ and $256$ is a unit outside $2$, it follows that in~$K/\QQ$,
the element $256$ is a local norm at every odd finite place. Finally, at $2$ the extension $K/\QQ$ is unramified with Frobenius element of order $8$, and hence $256=2^8$ is a norm from $K_2$ as well.

It is left to show that $256$ is not a global norm from $K$.  Let us first recall some notation used above.
For $\alpha \in k^*$, we consider the variety $X^{\alpha} := (\SL_n \times T^{\alpha})/G$, equipped with the projection $X^{\alpha} \to B$.
 Given a rational point $b \in B(k)$, the fibre of $X^1 \to B$ over $b$ is naturally isomorphic to the norm~$1$ torus of $K/k$, which we denote by $T^1_b$. 
The fibre $X^{\alpha}_b$ of $X^{\alpha} \to B$ over $b$ is then naturally isomorphic to the norm $\alpha$ torsor of $T^1_b$. We also recall that $\widehat{T}^1$ denotes the character lattice of $T^1$, which carries a natural action of $G$.

Let $b \in B(\QQ)$ be such that $[b] \in  \HH^1(\QQ,G)$ classifies the extension $K/\QQ$ (see \cite[\textsection1.2]{Harari}).
In order to prove that $X^{256}_b(\QQ) = \varnothing$, i.e.\ that $256$ is not a norm from~$K$,
we shall now exhibit a Brauer--Manin obstruction on $X^{256}_b$.

As $G=\pi_1(B,\pi_G(1))$,
we may identify  $G$\nobreakdash-modules with locally constant \'etale sheaves of abelian
groups on~$B$.
In this way, we view $\widehat T^1/2\widehat{T}^1$ as an \'etale sheaf on~$B$
and~$\varphi$
as an element
of $\HH^1(B,\widehat T^1/2\widehat{T}^1)$
via the natural isomorphism $\HH^1(B,\widehat{T}^1/2\widehat{T}^1)= \HH^2(G,\ZZ/2\ZZ)$.
The map $p:X^{16} \to X^{256}$ induced by the squaring map $T^{16} \to T^{256}$
is a torsor under the
 $2$\nobreakdash-torsion subgroup scheme of $X^1\to B$,
which we identify with the $G$\nobreakdash-module $T^1[2]=\{x \in T^1(\bar\QQ)\mkern2mu:\mkern1.5mu x^2=1\}$;
this torsor is classified by an element $\psi \in  \HH^1(X^{256},T^1[2])$.
We set
$ \Beta := \psi \cup \varphi \in  \HH^2(X^{256},\mu_2)$.
We will abusively identify $\Beta$ with its image in $\Br(X^{256})$ and consider it as a \emph{Brauer element} of order $2$.

We have already seen that $X^{256}_b(\QQ_v)\neq\varnothing$ for any place~$v$ of~$\QQ$.
Let us show that the evaluation
$$
\sum_v \inv_v (x_v^*\Beta) \in \QQ/\ZZ
$$
is well defined and non-zero for any collection $(x_v)_v$ of local points in $X^{256}_b$.

Our assumptions on $\varphi$ imply that we can choose, for every weakly bicyclic subgroup $H \subseteq G$, a class $\widetilde{\varphi}_H \in \HH^1(H,\QQ/\ZZ)$ whose image,
under the boundary map $\HH^1(H,\QQ/\ZZ) \to \HH^2(H,\ZZ/2\ZZ)$,
is the restriction $\varphi_{H} \in \HH^2(H,\ZZ/2\ZZ)$ of $\varphi$.

For any place~$v$ of~$\QQ$,
let~$K_v$
denote the completion of~$K$ at a place of~$K$ dividing~$v$.
The corresponding decomposition group $D_v \subseteq G$
is weakly bicyclic since~$G$ is a $2$\nobreakdash-group and $K_2/\QQ_2$ is unramified.
Letting $\QQ_v \subseteq K_{\widetilde{\varphi}_{D_v}} \subseteq K_v$ denote the intermediate cyclic extension determined by $\widetilde{\varphi}_{D_v} \in \HH^1(D_v,\QQ/\ZZ)$,
a direct computation now reveals that
$x_v^*\Beta = (16,K_{\widetilde{\varphi}_{D_v}}/\QQ_v) \in \Br(\QQ_v)$.

Since $\varphi$ is assumed to vanish on every cyclic subgroup of order $16$, the class $\widetilde{\varphi}_{D_v}$ becomes divisible by $2$ when restricted to every such subgroup. Since the exponent of~$G$ divides~$16$, it follows that
 $8\widetilde{\varphi}_{D_v} \in \HH^1(D_v,\QQ/\ZZ) = \Hom(D_v,\QQ/\ZZ)$
vanishes when restricted to any cyclic subgroup of~$D_v$, and hence vanishes;
in other words,
the degree of the extension $K_{\widetilde{\varphi}_{D_v}}/\QQ_v$
divides~$8$. On the other hand, as~$D_2$ is cyclic of order~$8$ and $\varphi$ does not vanish when restricted to~$D_2$ we have that $\widetilde{\varphi}_{D_2} \in \HH^1(D_2,\QQ/\ZZ) \cong \ZZ/8\ZZ$ is not divisible by $2$ and so
$K_{\widetilde{\varphi}_{D_2}} = K_2$.
We conclude that $\inv_v (x_v^*\Beta) = 0$
for all $v \neq 2$ 
(recall that $16$ is an $8$th power at such~$v$)
while $\inv_2 (x_2^*\Beta) = 1/2 \in \QQ/\ZZ$.
\end{proof}

We shall now construct a $2$-group $G$ satisfying the conditions of Proposition~\ref{p:example}.
Let $N$ be the group generated by $4$ generators $x,y,z_+,z_-$ under the following relations:
\begin{enumerate}
\item
$x^{16} = y^{16} = z_+^8 = z_-^8 = 1$;
\item
each of $z_+,z_-$ commutes with each of $x,y,z_+,z_-$;
\item
$[x,y] = z_+z_-$. 
\end{enumerate}
In particular, $N$ is a central extension of the bicyclic group $\ZZ/16\ZZ\<x,y\>$ by the bicyclic group
$\ZZ/8\ZZ\<z_+,z_-\>$.
Let $\sigma: N \to N$ be the involution given by $\sigma(x)=x^{-1},\sigma(y)=y^{-1}, \sigma(z_+)=z_-$ and $\sigma(z_-)=z_+$. We define $G := N \rtimes \ZZ/2\ZZ\<\sigma\>$ to be the associated semi-direct product and view~$\sigma$ as an element of~$G$.

It is straightforward that $G$ satisfies Conditions (i) and (ii) of Proposition~\ref{p:example}.
Let us now construct an element $\varphi \in  \HH^2(G,\ZZ/2\ZZ)$ satisfying Condition~(iii).
The homomorphism
 $\rho:N \to \ZZ/8\ZZ$
which sends $x,y$ to $0$, $z_+$ to $1$ and $z_-$ to $-1$ intertwines the action of $\sigma$ with the action of $-1: \ZZ/8\ZZ \to \ZZ/8\ZZ$. Consequently, it induces a homomorphism
$\rho':G = N \rtimes \ZZ/2\ZZ \to \ZZ/8\ZZ \rtimes \ZZ/2\ZZ =: D_8$ 
to the dihedral group of order $16$. Consider the short exact sequence
\begin{equation}\label{e:dih} 
1 \to \ZZ/2\ZZ \to D_{16} \stackrel{q}{\to} D_8 \to 1 
\end{equation}
where $D_{16} := \ZZ/16\ZZ \rtimes \ZZ/2\ZZ$ is the dihedral group of order $32$ and the map $q$ is induced by the surjective map $\ZZ/16\ZZ \to \ZZ/8\ZZ$. Let $\varphi_{D_8} \in  \HH^2(D_8,\ZZ/2\ZZ)$ be the element classifying the central extension~\eqref{e:dih} and let $\varphi := (\rho')^*\varphi_{D_8} \in  \HH^2(G,\ZZ/2\ZZ)$.
We leave it to the reader to verify that $\varphi$ has the desired properties.


\begin{thebibliography}{xx}

\bibitem{ASVW17}
{S.A.~Altu\u{g}, A.~Shankar, I.~Varma, K.H.~Wilson},
{The number of $D_4$-quartic fields ordered by conductor}. 
\emph{J. Eur. Math. Soc.} {\bf23} (2021), 2733--2785. 

\bibitem{BL18}
{A.~Bartel, H.W.~Lenstra Jr.},
{On class groups of random number fields}.
\emph{Proc. Lond. Math. Soc.} {\bf121} no. 3 (2020), 927--953.


\bibitem{Bha14c}
{M.~Bhargava},
{A positive proportion of plane cubics fail the Hasse principle}, preprint.
arXiv:1402.1131.

\bibitem{BST13}
{M.~Bhargava, A.~Shankar, J.~Tsimerman},
{On the Davenport-Heilbronn theorems and second order terms}.
\emph{Invent. math.} (2013), 193--439.

\bibitem{BV15}
{M.~Bhargava, I.~Varma},
{On the mean number of $2$-torsion elements in the class groups, narrow class groups, and ideal groups of cubic orders and fields}.
\emph{Duke Math. J.} {\bf164} (2015), No. 10, 1911--1933.





\bibitem{BB14b}
R.~de la Bret\`{e}che, T.D.~Browning,
{Contre-exemples au principe de Hasse pour certains tores coflasques}.
\emph{J. Th\'{e}or. Nombres Bordeaux} {\bf26} (2014), 25--44. 

\bibitem{BB14a}
R.~de la Bret\`{e}che, T.D.~Browning,
{Density of Ch\^{a}telet surfaces failing the Hasse principle}.
\emph{Proc. London Math. Soc.} {\bf108} (2014), 1030--1078. 

\bibitem{BBL}
M.~Bright, T.D.~Browning, D.~Loughran,
{Failures of weak approximation in families},
Compositio Math., {\bf152} (71) (2016), 1435--1475.

\bibitem{BrowningSurvey}
T.D.~Browning.
{How often does the Hasse principle hold?}
\emph{Algebraic Geometry: Salt Lake City 2015, Proc. Symposia Pure Math.} {\bf97.2} (2018), AMS, 89--102.

\bibitem{BN15} 
{T.D.~Browning, R.~Newton}, 
{The proportion of failures of the Hasse norm principle}.
\emph{Mathematika} {\bf62} (2016), 337--347.


\bibitem{CF} 
J.W.S.~Cassels, A.~Fr\"{o}hlich. 
\emph{Algebraic Number Theory.} 
Second Edition. London Mathematical Society. 2010.

\bibitem{ctpalsko}
{J.-L.~Colliot-Th\'el\`ene, A.~P\'al, A.~N.~Skorobogatov},
{Pathologies of the Brauer-Manin obstruction},
\emph{Math. Z.} {\bf 282} (2016), no. 3--4, 799--817.

\bibitem{ctsansuctores}
{J.-L.~Colliot-Th\'el\`ene, J.-J.~Sansuc},
{La $R$-\'equivalence sur les tores},
\emph{Ann.\ Sci.\ \'Ecole Norm.\ Sup.\ (4)} {\bf 10} (1977), no.~2, 175--229.

\bibitem{ctsansuc}
{J.-L.~Colliot-Th\'el\`ene, J.-J.~Sansuc},
{La descente sur les vari\'et\'es rationnelles. II},
\emph{Duke Math. J.} {\bf 54} (1987), no.~2, 375--492.

\bibitem{ctsrationality}
{J.-L.~Colliot-Th\'el\`ene, J.-J.~Sansuc},
{The rationality problem for fields of invariants under linear algebraic groups (with special regards to the Brauer group)},
\emph{Algebraic groups and homogeneous spaces}, 113--186,
Tata Inst.\ Fund.\ Res.\ Stud.\ Math., 19, Tata Inst.\ Fund.\ Res., Mumbai, 2007. 


\bibitem{Delsarte}
{S.~Delsarte},
{Fonctions de M{\"obius} sur les groupes abeliens finis}.
\emph{Ann. of Math.} {\bf49} (1948), 600--609.

\bibitem{ekedahl}
{T.~Ekedahl},
\emph{An effective version of Hilbert's irreducibility theorem},
S\'eminaire de Th\'eorie des Nombres, Paris 1988--1989, 241--249,
Progress in Mathematics, vol.~91, Birkh\"auser Boston, Boston, MA, 1990.

\bibitem{EPW17}
{J.~Ellenberg, L.B.~Pierce, M.M.~Wood},
{On $\ell$-torsion in class groups of number fields}.
\emph{Algebra Number Theory} {\bf11} (2017), No. 8, 1739--1778.

\bibitem{EJ15}
{A.-S.~Elsenhans, J.~Jahnel},
{Cubic surfaces violating the Hasse principle are Zariski dense in the moduli scheme}. 
\emph{Advances Math.} {\bf280} (2015), 360--378.

\bibitem{HNP}
{C.~Frei, D.~Loughran, R.~Newton}, 
{The Hasse norm principle for abelian extensions}. 
\emph{Amer. J. Math.} {\bf140}(6) (2018), 1639--1685.


\bibitem{FW17}
{C.~Frei, M.~Widmer},
{Average bounds for the $\ell$-torsion in class groups of cyclic extensions}.
\emph{Res. Number Theory} {\bf4}, 34 (2018). 


\bibitem{FH91}
W.~Fulton, J.~Harris,
\emph{Representation theory. A first course}. 
Graduate Texts in Mathematics, {\bf129}. Springer-Verlag, New York, 1991.

\bibitem{Harari}
D.~Harari,
{Quelques propri\'et\'es d'approximation reli\'ees \`a la cohomologie galoisienne d'un groupe alg\'ebrique fini},
Bull.\ Soc.\ Math.\ France {\bf 135} (2007), no.~4, 549--564. 

\bibitem{HW18}
{Y.~Harpaz, O.~Wittenberg},
{Z\'ero-cycles sur les espaces homog\`enes et probl\`eme de Galois inverse}.
\emph{J. Am. Math. Soc.} {\bf33} (3), 2020, 775--805.

\bibitem{supersolvable}
{Y.~Harpaz, O.~Wittenberg},
{Supersolvable descent for rational points}, in preparation.

\bibitem{MR0276203}
H.~Hering, \emph{Seltenheit der {G}leichungen mit {A}ffekt bei linearem {P}arameter}, Math. Ann. \textbf{186} (1970), 263--270.

\bibitem{HSV18}
{W.~Ho, A.~Shankar, I.~Varma},
{Odd degree number fields with odd class number},
\emph{Duke Math. J.} {\bf167} (2018), No. 5, 995--1047.



\bibitem{IK04}
{H.~Iwaniec, E.~Kowalski},
\emph{Analytic number theory}.
Amer. Math. Soc., Providence, RI, 2004.



\bibitem{LS16}
{D.~Loughran and A.~Smeets},
{Fibrations with few rational points.}
\emph{Geometric and Functional Analysis (GAFA)}, {\bf26} (5) (2016), 1449--1482.

\bibitem{MacedoAn}
{A.~Macedo},
{The Hasse norm principle for $A_n$-extensions.}
\emph{J.~Number Theory}, {\bf 211} (2020), 500--512.


\bibitem{Macedo18}
{A.~Macedo},
 \textit{Local-global principles for norms}. PhD thesis, University of Reading, 2021. 




\bibitem{NSW08}
{J.~Neukirch, A.~Schmidt, K.~Wingberg}, 
\textit{Cohomology of Number Fields}.
Second edition. Grundlehren der Mathematischen Wissenschaften {\bf323}, Springer-Verlag, 2008.

\bibitem{PTBW17}
{L.B.~Pierce, C.L.~Turnage-Butterbaugh, M.M.~Wood},
{An effective Chebotarev density theorem for families of number fields, with an application to $\ell$-torsion in class groups}, \textit{Invent. Math.} \textbf{219} (2020), no. 2, 701--778.


\bibitem{PR94}
{V.~Platonov, A.~Rapinchuk}, 
\textit{Algebraic groups and number theory}.
Pure and Applied Mathematics, 139. Academic Press, Inc., Boston, MA, 1994.

\bibitem{Rome17}
{N.~Rome},
{The Hasse norm principle for  biquadratic extensions}.
\emph{J. Th\'{e}or. Nombres Bordeaux},  \textbf{30} (2018), no. 3, 947--964. 

\bibitem{San81}
{J.-J.~Sansuc}, 
{Groupe de Brauer et arithm\'{e}tique des groupes alg\'{e}briques lin\'{e}aires sur un corps de nombres}.
\emph{J. reine angew. Math.} {\bf327} (1981), 12--80.

\bibitem{Ser59}
{J.-P.~Serre},
{On the fundamental group of a unirational variety},
\emph{J.\ London Math.\ Soc.}\ {\bf34} (1959), 481--484.

\bibitem{Ser76}
{J.-P.~Serre},
{Divisibilit\'{e} de certaines fonctions arithm\'{e}tiques}.
\emph{Enseign. Math. (2)}, {\bf22} (1976), 227--260.

\bibitem{Ser79}
{J.-P.~Serre}, \emph{Local fields}.
Graduate Texts in Mathematics, {\bf67}. Springer-Verlag, New York-Berlin, 1979.

\bibitem{Ser94}
{J.-P.~Serre}, \emph{Cohomologie galoisienne},
fifth edition,
Lecture Notes in Mathematics, {\bf5}. Springer-Verlag, Berlin, 1994.

\bibitem{Ser12}
{J.-P.~Serre}, \emph{Lectures on $N_X(p)$}.
Chapman \& Hall/CRC Research Notes in Mathematics, 11. CRC Press, Boca Raton, FL, 2012.

\bibitem{serretopics}
{J.-P.~Serre}, \emph{Topics in Galois theory},
second edition,
Research Notes in Mathematics, vol.~1,
A K Peters, Ltd., Wellesley, MA, 2008,
xvi+120~pp.

\bibitem{Sko96}
{A.N.~Skorobogatov},
{Descent on fibrations over the projective line},
\emph{Amer.\ J.\ Math.}\ {\bf118} (1996), no. 5, 905--923.

\bibitem{Sko01}
{A.N.~Skorobogatov}, \emph{Torsors and rational points},
Cambridge Tracts in Mathematics, vol.~144, CUP, Cambridge, 2001.

\bibitem{Ten15}
{G.~Tenenbaum}, \emph{Introduction to analytic and probabilistic number
  theory}. Third edition. Graduate Studies in Mathematics, 163. American
Mathematical Society, Providence, RI, 2015.

\bibitem{Tit86}
{E.C.~Titchmarsh}, \emph{The theory of the Riemann
  zeta-function}. Second edition. The Clarendon Press, Oxford University Press, New York, 1986.

\bibitem{Wan50} 
{S.~Wang}, 
{On Grunwald's theorem}.
{\em Ann. of Math.} {\bf 51} (1950), 471--484.

\bibitem{wslc}
{O.~Wittenberg},
\emph{Rational points and zero-cycles on rationally connected varieties over number fields},
in Algebraic Geometry: Salt Lake City 2015, Part 2, pp.~597--635,
Proceedings of Symposia in Pure Mathematics 97, American Mathematical Society, Providence, RI, 2018.

\bibitem{Woo10} 
{M.~Wood}, 
{On the probabilities of local behaviors in abelian field extensions}.
{\em Compositio Math.} {\bf 146} (2010), no.~1, 102--128.

\bibitem{Woo17}
{M.M.~Wood},
{Nonabelian Cohen-Lenstra Moments (appendix by P.M. Wood)}.
\emph{Duke Math. J.} {\bf168} (2019), no.~3, 377--427.

\bibitem{Wri89} 
{D.~Wright}, 
{Distribution of discriminants of abelian extensions}.
{\em Proc. London Math. Soc.} {\bf 58} (1989), no.~1, 17--50.

\end{thebibliography}
\end{document}